\documentclass{amsart}
\raggedbottom
\usepackage{verbatim}
\numberwithin{equation}{section}
\usepackage[all]{xy}
\usepackage{color}
\usepackage{amssymb}
\usepackage{stix}
\usepackage[left=3.5cm,right=3.5cm]{geometry}
\usepackage{tikz-cd}
\usepackage{mathtools}

%  amstexincl.tex
%

%\input{scr}
%
%  scr.tex
%
\let\cal\mathcal
\def\Ascr{{\cal A}}
\def\Bscr{{\cal B}}
\def\Cscr{{\cal C}}
\def\Dscr{{\cal D}}

\def\Mscr{{\cal M}}

\def\Tscr{{\cal T}}

%\input {blb}
%
%  blb.tex
%
\let\blb\mathbb

\def \ZZ{{\blb Z}}

\def \HH{{\blb H}}

\def\id{\text{id}}
\def\Id{\operatorname{id}}

\def\Der{\operatorname{Der}}

\def\Ab{\mathbb{Ab}}

\def\mod{\operatorname{mod}}

\def\Ext{\operatorname {Ext}}
\def\Hom{\operatorname {Hom}}

\def\im{\operatorname {im}}

\def\ker{\operatorname {ker}}

\def\id{{\operatorname {id}}}

\def\r{\rightarrow}

%
%  sectionlemma.tex
%
% theoremstyle plain

\newtheorem{lemma}{Lemma}[section]
\newtheorem{proposition}[lemma]{Proposition}
\newtheorem{theorem}[lemma]{Theorem}
\newtheorem{corollary}[lemma]{Corollary}

\theoremstyle{definition}

\newtheorem{definition}[lemma]{Definition}

{

}

\theoremstyle{remark}

\newtheorem{remark}[lemma]{Remark}

\newdimen\uboxsep \uboxsep=1ex
\def\uboxn#1{\vtop to 0pt{\hrule height 0pt depth 0pt\vskip\uboxsep
\hbox to 0pt{\hss #1\hss}\vss}}

\def\uboxs#1{\vbox to 0pt{\vss\hbox to 0pt{\hss #1\hss}
\vskip\uboxsep\hrule height 0pt depth 0pt}}

\def\HH{\operatorname{HH}}

\def\Ob{\operatorname{Ob}}
\def\Tw{\operatorname{Tw}}
\def\aa{\mathfrak{a}}
\def\bb{\mathfrak{b}}
\def\cc{\mathfrak{c}}
\def\dd{\mathfrak{d}}

\def\Ab{\mathbf{Ab}}

\marginparwidth=0.19\textwidth
\let\oldmarginpar\marginpar
\long\def\marginpar#1{\oldmarginpar{\raggedright\parskip 2pt\tiny\baselineskip=0pt\lineskip=0pt \lineskiplimit=0pt  #1}}
\def\cone{\operatorname{cone}}

\def\Free{\operatorname{Free}}

\title{A $\mathrm{k}$-linear triangulated category without a model}

\def\coDer{\operatorname{coDer}}

\def\rep{\operatorname{rep}}
\author{Alice Rizzardo}
\email[Alice Rizzardo]{alice.rizzardo@liverpool.ac.uk}
\address{Department of Mathematical Sciences\\
University of Liverpool\\
Mathematical Sciences Building\\
Liverpool L69 7ZL\\
United Kingdom}
\author{Michel Van den Bergh}
\email[Michel Van den Bergh]{michel.vandenbergh@uhasselt.be}
\address{Universiteit Hasselt\\Martelarenlaan 42\\3500 Hasselt\\Belgium}
\thanks{The first author is a Lecturer at the University of Liverpool. She is supported by EPSRC grant EP/N021649/1. The second author is a senior researcher at the Research Foundation - Flanders (FWO). He is supported by the FWO-grant G0D8616N ``Hochschild cohomology and deformation theory of triangulated categories.''}
\keywords{Triangulated category, model, enhancement}
\subjclass{13D09, 18E30, 14A22}
\begin{document}
\begin{abstract}
  In this paper we give an example of a triangulated category, linear over a field of characteristic zero, which does not carry a DG-enhancement. The only previous examples of triangulated categories without a model have been constructed by Muro, Schwede and Strickland. These examples are however not linear over a field.
\end{abstract}
\maketitle
\section{Introduction}
\subsection{Main result}
The only known examples of triangulated categories without model (not even topological) are given in \cite{muro2}.
The examples in loc.\ cit.\  are not linear over a field and furthermore they  depend on some special properties of the number $2$. In particular they satisfy $2\neq 0$ but $4=0$.

In this paper we discuss triangulated categories over a field $k$ of
characteristic zero.\footnote{Notwithstanding what we say here, almost everything we do is valid in arbitrary characteristic. However in finite characteristic we would
 also have to consider topological enhancements and we do not discuss these in the current paper.}  In this case the appropriate notion of a model is
a DG-enhancement \cite{Bondal5,CanonacoStellari,OrlovLunts}, or what
amounts to the same thing\footnote{We can always transform an $A_\infty$-enhancement into a DG-enhancement by taking its DG-hull. See \cite[p127]{Lefevre} or \cite[Appendix C]{RizzardoVdB2}.}: an $A_\infty$-enhancement (see \S\ref{sec:nomodel}). Our main result is an example of a \emph{$k$-linear triangulated category which does not carry an
$A_\infty$-enhancement}. This in particular answers positively what is described as a challenging question in the survey \cite{CanonacoStellari} by Canonaco and Stellari, namely Question 3.8. Our example also provides a negative answer to Question 3.3 of their survey.

\medskip

To describe the example we have to introduce some notation. Fix a natural number $n\ge 3$ and
let $k$ be either a field of characteristic zero or an infinite field of characteristic $>n$.
Let $R=k[x_1,\ldots,x_n]$ and let $K$ be the quotient field of~$R$. 
Furthermore let $R[\varepsilon]$ be the $R$-linear DG-algebra with $|\varepsilon|=-n+2$, $\varepsilon^2=0$, $d\varepsilon=0$. 
Let $C(R,R)$ be the Hochschild complex of $R$ and  let $\HH^n(R,R)=H^n(C(R,R))$. Let $T^n_{R/k}=\wedge^n_R\Der_k(R,R)$.
The HKR theorem furnishes an inclusion $T^n_{R/k}\subset Z^n C(R,R)$ which induces an isomorphism $T^n_{R/k}\cong \HH^n(R,R)$.
For $\eta\in T^n_{R/k}$   we let $R_\eta$ be the $k[\varepsilon]$-linear
$A_\infty$-deformation of
$R[\varepsilon]$  whose only non-trivial higher multiplication is given by $\varepsilon\eta$.
\begin{theorem}[see \S\ref{sec:noenhance}] \label{intro:thm0} Assume $n\ge 14$ and $\eta\neq 0$. Then there exists a triangulated category without $A_\infty$-en\-hancement with semi-orthogonal decomposition
$\langle D(K),D(R_\eta)\rangle$.
\end{theorem}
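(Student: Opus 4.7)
The plan is to produce the triangulated category $\Tscr$ as a Bondal--Kapranov-style gluing $\langle D(K),D(R_\eta)\rangle_F$ along a carefully chosen exact functor $F\colon D(R_\eta)\to D(K)$, and then show that no $A_\infty$-enhancement of $\Tscr$ can exist because such an enhancement would force $F$ to lift to an $A_\infty$-functor between the canonical enhancements of $D(R_\eta)$ and $D(K)$, which in turn would trivialise the class $\eta$.

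First I would write down $F$. The naive candidate is the base change $K\otimes_R^L(-)$. The point is that this naive functor \emph{does} lift to an $A_\infty$-functor, so on its own it cannot produce a non-enhanceable gluing. Instead I expect $F$ to be the naive functor modified by a term built from $\eta$ via the HKR identification $T^n_{R/k}\cong\HH^n(R,R)$: essentially one uses $\eta$ to produce a triangulated transformation that is well-defined on isomorphism classes but whose definition at the chain level is obstructed. Since $R_\eta$'s only non-trivial higher multiplication is $\varepsilon\eta$, the triangulated structure of $D(R_\eta)$ ``sees'' $\eta$ only up to objects, which is exactly what allows $F$ to exist as a functor of triangulated categories even though the corresponding $A_\infty$-datum is obstructed. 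I would then feed $F$ into the standard Bondal--Kapranov construction, verifying the octahedral axiom with the machinery developed earlier in the paper; this produces $\Tscr$ with the prescribed semi-orthogonal decomposition.

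Next I would rule out enhancements. Suppose $\widetilde{\Tscr}$ is an $A_\infty$-enhancement of $\Tscr$. Restricting to the two semi-orthogonal pieces gives $A_\infty$-enhancements of $D(K)$ and $D(R_\eta)$; by the uniqueness of enhancements for these categories (they are of geometric/algebraic origin, so Lunts--Orlov applies) these must be quasi-equivalent to the standard enhancements $\Perf_{A_\infty}(K)$ and $\Perf_{A_\infty}(R_\eta)$. The gluing datum inside $\widetilde{\Tscr}$ then promotes $F$ to an honest $A_\infty$-bimodule $M$ over $(R_\eta,K)$. The key computation is that the Hochschild-cohomological obstruction for $M$ to exist, computed via the $A_\infty$-structure on $R_\eta$, is precisely (a non-zero multiple of) the image of $\eta$ in an appropriate $\Ext$- or Hochschild group between $R_\eta$ and $K$; non-vanishing of $\eta$ then contradicts the existence of $M$.

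The hard part will be step three: extracting from an arbitrary $A_\infty$-enhancement of $\Tscr$ a concrete obstruction class and identifying it with $\eta$ on the nose. This requires a careful obstruction-theoretic analysis of $A_\infty$-functors out of the $A_\infty$-deformation $R_\eta$, paying attention to how the higher multiplication $\varepsilon\eta$ propagates through the bar construction, and then comparing the resulting class to $\eta$ via the HKR map. I expect the numerical hypothesis $n\ge 14$ to come in at this stage: it should guarantee the vanishing of auxiliary Hochschild/$\Ext$-groups (between $R_\eta$ and $K$ in intermediate degrees, and possibly between $D(K)$ and itself) that would otherwise ``absorb'' the obstruction and prevent it from being identified cleanly with $\eta$. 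Once that identification is in place, $\eta\neq 0$ finishes the argument.
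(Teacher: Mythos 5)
Your overall strategy---glue $D(K)$ and $D(R_\eta)$ along a functor and argue that an enhancement of the gluing would force an $A_\infty$-lift of that functor which cannot exist---is indeed the paper's strategy. However, several crucial ingredients of your proposal point in the wrong direction or are missing. The paper's functor runs $F\colon D(K)\to D(R_\eta)$, $K\mapsto K_\eta$, not $D(R_\eta)\to D(K)$: the base change functor you propose is tensoring with an honest $A_\infty$-bimodule, hence is $A_\infty$-enhanceable, and there is no recipe for ``modifying it by $\eta$'' at the level of triangulated categories that would change this. The actual non-enhanceable functor exploits that $D(K)$ is semisimple, so the exact functor $K\mapsto K_\eta$ is determined by a graded ring map $K\to D(R_\eta)(K_\eta,K_\eta)=K[\varepsilon]$ alone; the obstructions to extending this to an $A_i$-functor lie in $\HH^j(K,D(R_\eta)(K_\eta,K_\eta)_{-j+2})$ for $3\le j\le i$, vanish for $j<n$, and produce $\eta$ at step $n$ (Theorem~\ref{thm:mainth}).

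Second, you cannot invoke Lunts--Orlov for $D(R_\eta)$: $R_\eta$ is a genuine $A_\infty$-algebra with nonzero $m_n$, not a scheme, ring, or Grothendieck category, so their uniqueness theorems do not apply. The required (weak) uniqueness of enhancement for $D(R_\eta)$ is proved from scratch in Proposition~\ref{lem:enhancement} via higher Toda brackets of Koszul complexes, and in fact that argument shows the triangulated structure of $D(R_\eta)$ \emph{does} detect $\eta$---contrary to your claim that it ``sees $\eta$ only up to objects''. Finally, you say you would ``feed $F$ into the standard Bondal--Kapranov construction'', but there is no such construction for bare triangulated categories: the usual gluing requires a DG or $A_\infty$ lift of $F$, and the whole point is that none exists. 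The gluing must therefore be performed at the truncated $A_{n-1}$-level, which is why the paper develops the theory of pre-triangulated $A_n$-categories and truncated twisted complexes; and the bound $n\ge 14$ is not a vanishing condition on auxiliary $\Ext$-groups but the arity threshold $n-1\ge 13$ at which that machinery (Theorems~\ref{th:mainth} and~\ref{th:maingluing}) returns a genuine triangulated category from the $A_{n-1}$-gluing.
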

In the next few sections we discuss in more detail the ingredients that go into the construction of this example.
\subsection{Pre-triangulated $A_n$-categories}
An $A_\infty$-category \cite{Lefevre} is a DG-graph equipped with higher
compositions $(m_i)_{i}$ which
satisfy certain natural quadratic relations.
If only $m_i$ with
$i\le n$ are defined then we obtain the corresponding notion
of an $A_n$-category. As a general principle, for any
$A_\infty$-notion there is a corresponding $A_n$-notion in which we
consider only operations with $\le n$ arguments and we require the
axioms to only hold for expressions with $\le n$ arguments. Facts
about $A_\infty$-categories remain valid for $A_n$-categories as long
as they only involve such expressions. It is useful to note that if $\aa$ is an $A_n$-category
for $n\ge 3$ then its ``homotopy category'' $H^0(\aa)$ is an honest category.

\medskip

A DG-category is an $A_\infty$-category with $m_i=0$ for $i>2$. In their seminal paper \cite{Bondal5} Bondal and Kapranov
introduced \emph{pre-triangulated DG-categories} which, in particular, have the property that their
homotopy category is canonically
 triangulated.
Their most striking insight  is that, whereas a triangulated category is an additive category
with extra structure, a pre-triangulated DG-category is a DG-category with extra properties.

\medskip

It is well understood how to define  the analogous notion of a \emph{pre-triangulated $A_\infty$-category} (see \cite{BLM}). An $A_\infty$-category
is pre-triangulated if the natural functor $\aa\r \Tw\aa$ is a quasi-equivalence, where $\Tw\aa$ is the category of twisted complexes over $\aa$. It is easy
to see that this is equivalent to  $\aa$ being closed under suspensions, desuspensions and cones of closed maps, up to isomorphism in $H^0(\Tw \aa)$. Stating these properties
explicitly requires only a finite number of higher operations on $\aa$ and so they make sense for $A_n$-categories for $n\gg 0$.

For any $A_\infty$-category $\aa$, $H^0(\Tw\aa)$ is canonically triangulated and hence if $\aa$ is pre-triangulated then\footnote{In this introduction we will follow tradition by viewing a triangulated category as an additive category. However in the main body of the paper
we will equip a triangulated category with its  canonical graded enrichement. This means in particular that we use $H^\ast(\aa)$ rather than $H^0(\aa)$. See \S\ref{sec:notconv} for the rationale for this choice.}
 $H^0(\aa)$ is also canonically triangulated. Now it is intuitively clear that 
it should be possible to prove this using only 
a finite number of the higher operations on $\aa$. It then follows that it must be possible to define for $n\gg 0$
a notion of a pre-triangulated $A_n$-category which induces a canonical triangulation on its homotopy category.

\medskip

Unfortunately, carrying out this program naively using explicit equations seems to be a nightmare. Therefore we are forced carry over some more advanced technology from the 
$A_\infty$-context. This is done in \S\ref{sec:prelimAn}, \S\ref{sec:truncated}.
The main difficulty we face is that the definition of $\Tw\aa$ depends on higher compositions in $\aa$ of unbounded arity and therefore does not generalize to $A_n$-categories. Luckily this issue can be solved by considering twisted complexes of uniformly bounded
length. In fact we only need $\Tw_{\le 1}\aa$, which consists of twisted complexes of length two. This leads to our first main result.
\begin{theorem}[Lemma \ref{An-ness}, Definition \ref{triangulatedstructure}, Theorem \ref{th:mainth}] \label{thm:intro1}
If $\aa$ is an $A_n$-category then $\Tw_{\le 1}\aa$ is an $A_{\lfloor (n-1)/2\rfloor}$-category. If $n\ge 7$ then we say that $\aa$ is \emph{pre-triangulated} if $H^\ast(\aa)\r H^\ast(\Tw_{\le 1}\aa)$ 
is a graded equivalence.
If $\aa$ is pre-triangulated and $n\ge 13$ then $H^0(\aa)$ is canonically triangulated.
\end{theorem}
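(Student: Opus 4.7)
The plan is to prove the three assertions in order.

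For Part 1, I would transcribe the usual $A_\infty$-formula for higher compositions on twisted complexes,
\[
m_k^{\Tw}(f_1, \ldots, f_k) = \sum_{i_0, \ldots, i_k \ge 0} m_{k + i_0 + \cdots + i_k}^{\aa}\bigl(\delta^{i_0}, f_1, \delta^{i_1}, \ldots, f_k, \delta^{i_k}\bigr),
\]
and observe that since a length-two twisted complex has only two filtration strata with each $\delta$ strictly lowering the filtration, $i_l \in \{0, 1\}$, the sum is automatically finite, and the arity on the right is bounded by $2k + 1$. Hence $m_k^{\Tw}$ is well-defined as soon as $2k + 1 \le n$, and the same bookkeeping shows that the $A_k^{\Tw}$-relations follow from the $A_j^{\aa}$-relations for $j \le 2k + 1$, giving the claimed $A_{\lfloor (n-1)/2 \rfloor}$-structure on $\Tw_{\le 1}\aa$. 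Part 2 is then a definition, well-posed because $n \ge 7$ forces $\lfloor (n-1)/2 \rfloor \ge 3$, which suffices for $H^\ast(\Tw_{\le 1}\aa)$ to be an honest graded category.

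For Part 3, my plan is to first construct a canonical triangulated structure on $H^0(\Tw_{\le 1}\aa)$ and then transport it to $H^0(\aa)$ via the pre-triangulation equivalence. The distinguished triangles of $H^0(\Tw_{\le 1}\aa)$ are declared to be those isomorphic to the canonical mapping-cone triangles $X \to Y \to \cone(f) \to X[1]$ associated to closed degree-zero morphisms $f\colon X \to Y$. Axioms TR1, TR2 and TR3 can then be verified by direct manipulations inside $\Tw_{\le 1}\aa$---cones, rotations, and the extension of a commutative square to a morphism of triangles---using only the low-arity part of its $A_{\lfloor (n-1)/2 \rfloor}$-structure together with a finite number of higher homotopies.

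The main obstacle is the octahedral axiom TR4, whose proof intrinsically requires a cone-of-cones construction. My plan is to realize the octahedron via length-three twisted complexes, i.e., inside $\Tw_{\le 2}\aa$. A computation analogous to Part 1 shows that $\Tw_{\le 2}\aa$ is an $A_k$-category whenever $3k + 2 \le n$, so for $n \ge 13$ it is comfortably an $A_3$-category with a well-defined graded homotopy category. Given composable $f\colon X \to Y$, $g\colon Y \to Z$ in $\aa$, I would encode $(f, g)$ as a length-three twisted complex whose three length-two subquotients realize the mapping cones of $f$, $g$ and $gf$; the natural subquotient maps then provide the triangle $\cone(f) \to \cone(gf) \to \cone(g) \to \cone(f)[1]$ demanded by TR4 after passing to $H^0$. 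The precise bound $n \ge 13$ arises from the need to have enough higher operations simultaneously available in $\Tw_{\le 1}\aa$ (for TR1--TR3) and in $\Tw_{\le 2}\aa$ (for TR4); the main technical burden is the careful verification that every $A_\infty$-relation invoked along the way occurs at an arity covered by the $A_n$-structure of $\aa$.
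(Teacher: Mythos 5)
Part 1 and the arity bookkeeping for $\Tw_{\le 1}\aa$ and $\Tw_{\le 2}\aa$ match the paper's Lemma \ref{higher operations} and Lemma \ref{An-ness} (your $2k+1\le n$ is the case $m=2$ of the paper's bound $mi+m-1\le n$, and similarly $3k+2\le n$ is $m=3$). Part 2 is the same observation as the paper's remark after the definition. The genuine divergence is in Part 3, and there the proposal has real gaps.

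First, your declared class of distinguished triangles in $H^{0}(\Tw_{\le 1}\aa)$ is not well-defined as stated. For a closed degree-zero morphism $f:X\to Y$ between \emph{arbitrary} objects of $\Tw_{\le 1}\aa$ (each of which is already a length-two twisted complex), the cone $\cone(f)$ would be a length-four object and therefore does not live in $\Tw_{\le 1}\aa$; it lives in $\Tw_{\le 1}(\Tw_{\le 1}\aa)$ or, after a totalization you have not constructed, in $\Tw_{\le 3}\aa$. The paper avoids this by only declaring \emph{standard} triangles for morphisms $f:A\to B$ with $A,B\in\Ob(\aa)$ (Definition \ref{def:cone} and the paragraph after it), and then uses the pre-triangulation hypothesis, together with a homotopy inverse $\pi:\Tw_{\le 1}\aa\to\aa$ provided by Lemma \ref{lem:ffcase}, to reduce any triangle in $H^{\ast}(\aa)$ to that setting. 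You would need some analogue of this reduction before TR1--TR3 even make sense with your definition.

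Second, your TR4 strategy is genuinely different from the paper's. You want to exhibit the octahedron by encoding $(f,g)$ as a length-three twisted complex in $\Tw_{\le 2}\aa$ and reading the triangle $\cone(f)\to\cone(gf)\to\cone(g)\to\Sigma\cone(f)$ off the subquotient filtration. Even granting a workable definition of distinguished triangles, this requires identifying the length-four object $\cone\bigl(\cone(f)\to\cone(gf)\bigr)$ with the length-two object $\cone(g)$ via a Gaussian elimination / strong deformation retract, which involves a further stack of higher operations whose arity you have not controlled; this is exactly where the ``careful verification'' you defer would have to happen, and it is not obviously within the $A_{13}$ budget. The paper sidesteps all of this: TR1--TR4 are each reduced (TR4 via \cite[1.1.6]{BBD}) to lifting a small diagram in $H^{\ast}(\aa)$ to an $A_n$-functor $\mu:I_m\to\aa$ ($m\le 2$), passing through $\Tw_{\le 1}\mu:\Tw_{\le 1}I_m\to\Tw_{\le 1}\aa$, and pushing an octahedron (or rotated triangle, or TR3 fill-in) from the \emph{genuinely DG} pre-triangulated category $\Tw_{\le 1}I_m$ (Lemma \ref{lem:small-pretri}, Theorem \ref{th:bondal}) into $H^{\ast}(\aa)$ via $H^{\ast}(\pi\Tw_{\le 1}\mu)$ and Theorem \ref{bla}. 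That trick is what makes the bound $n\ge 13$ tractable without ever performing a cone-of-cones computation in $\aa$ itself. If you want to pursue your direct route, you would need to supply both the totalization/retraction step with its arity count and a corrected definition of distinguished triangles; as written the proposal does not close either gap.
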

The number 13 seems quite high and we are rather curious if it can be reduced.
\subsection{Gluing}
\label{intro:gluing}
We have already pointed out that if $\aa$ is an $A_n$-category then its ``pre-triangulated hull'' $\Tw \aa$ is not well-defined. So while we have a satisfactory theory of pre-triangulated $A_n$-categories, it is unclear how to actually construct non-trivial examples of them. Luckily there is one approach which works very well. 
It turns out that pre-triangulated $A_n$-categories  admit a ``gluing'' procedure and starting from pre-triangulated $A_\infty$-categories we can in this way produce  pre-triangulated $A_n$-categories which are not themselves $A_\infty$-categories.

Let us first review gluing in the context of triangulated categories. If $\Ascr$, $\Bscr$ are triangulated categories and $\Mscr$ is a $\Bscr-\Ascr$-bimodule (an additive bifunctor $\Ascr^\circ\times\Bscr\r \Ab$)
then a \emph{gluing} of $\Ascr$, $\Bscr$ across $\Mscr$ is a triangulated category $\Cscr$ together with a semi-orthogonal decomposition $\Cscr=\langle \Ascr,\Bscr\rangle$
such that $\Cscr(A,B)=\Mscr(A,B)$ for $A\in\Ob(\Ascr)$, $B\in\Ob(\Bscr)$. The data $(\Ascr,\Bscr,\Mscr)$
determines the objects of $\Cscr$ up to isomorphism and there is a long exact sequence relating the $\Hom$-spaces in $\Cscr$ to those in $\Ascr$, $\Bscr$ and the elements of $\Mscr$. However this is as far as it goes.
Triangulated categories are too flabby to allow
one to fully construct $\Cscr$ from the triple $(\Ascr,\Bscr,\Mscr)$.

\medskip

On the other hand if $\aa$, $\bb$ are $A_\infty$-categories and $M$ is
an $A_\infty$-$\bb$-$\aa$-module then it is a routine matter to define an
$A_\infty$-gluing category $\cc=\aa\coprod_M\bb$ such that if
$\aa$, $\bb$
are pre-triangulated then so is $\cc$ and there is a
semi-orthogonal decomposition
$H^0(\cc)=\langle H^0(\aa),H^0(\bb)\rangle$ with associated bimodule
$H^0(M)$.

To prove that $\cc$ is pre-triangulated we have to prove it is closed under cones of closed maps and again it is clear that this will only involve a finite number of higher operations. Hence
the theory can be developed for $A_n$-categories. This leads to our next main result.
\begin{theorem}[Theorem \ref{th:maingluing}] \label{thm:intro2}
Assume that $n\ge 13$ and that $\aa$, $\bb$ are pretriangulated $A_n$-categories and that $M$ is an $A_n$-$\bb$-$\aa$-bimodule. Then $\aa\coprod_M\bb$ is a pre-triangulated
$A_{n-1}$ category. If $n\ge 14$, so that $H^0(\aa\coprod_M\bb)$ is triangulated by Theorem \ref{thm:intro1}, then we have a semi-orthogonal decomposition
$H^0(\aa\coprod_M\bb)=\langle H^0(\aa),H^0(\bb)\rangle$ whose associated bimodule is $H^0(M)$.
\end{theorem}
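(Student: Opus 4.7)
The plan is to construct $\cc := \aa\coprod_M\bb$ explicitly as an $A_{n-1}$-category and then verify pre-triangulatedness via the criterion of Theorem \ref{thm:intro1}. For the construction, the objects are $\Ob(\aa)\sqcup\Ob(\bb)$, and the hom-complex $\cc(X,Y)$ equals $\aa(X,Y)$, $\bb(X,Y)$, $M(X,Y)$, or $0$ according as $(X,Y)$ lies in $\aa\times\aa$, $\bb\times\bb$, $\aa\times\bb$, or $\bb\times\aa$. The higher compositions on $\cc$ are assembled tautologically from those of $\aa$ and $\bb$ together with the $A_n$-bimodule operations of $M$; the Stasheff equations split according to how many entries come from $M$ and reduce on each piece to the corresponding axioms already known to hold. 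The arity absorbed by the bimodule compositions explains the truncation from $A_n$ to $A_{n-1}$.

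To show $\cc$ is pre-triangulated, I would apply Theorem \ref{thm:intro1}: since $n-1\ge 12\ge 7$, it suffices to check that the canonical map $H^\ast(\cc)\to H^\ast(\Tw_{\le 1}\cc)$ is a graded equivalence. For twisted complexes whose two terms both lie in $\aa$, respectively both in $\bb$, quasi-representability is immediate from the pre-triangulatedness of $\aa$, respectively $\bb$. The essential new case is a closed degree-zero morphism $f\in M(A,B)=\cc(A,B)$ with $A\in\aa$ and $B\in\bb$; its cone in $\Tw_{\le 1}\cc$ is the length-two twisted complex $A\xrightarrow{f} B$, and one must verify that it is quasi-representable by some object of $\cc$ itself. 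This is the semi-orthogonal cone construction familiar from the $A_\infty$-setting, and it invokes only a bounded number of the higher operations of $M$.

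Under the stronger hypothesis $n\ge 14$, we have $n-1\ge 13$, so Theorem \ref{thm:intro1} endows $H^0(\cc)$ with a canonical triangulation. The full faithfulness of the embeddings $H^0(\aa),H^0(\bb)\hookrightarrow H^0(\cc)$ and the vanishing $H^0(\cc)(B,A)=0$ for $A\in\aa$, $B\in\bb$ are immediate from the definition of $\cc$. Since every object of $\cc$ lies in exactly one of the two summands, every object of $H^0(\cc)$ sits in a distinguished triangle built from the cone construction above, producing the semi-orthogonal decomposition $\langle H^0(\aa),H^0(\bb)\rangle$. The associated bimodule evaluates to $H^0(\cc)(A,B)=H^0(M(A,B))$, which is $H^0(M)$ by definition.

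The principal obstacle is controlling the arity budget in the mixed cone construction: one must verify, using only the $A_{n-1}$-operations available on $\cc$, that the length-two twisted complex $A\xrightarrow{f} B$ genuinely represents a cone in $H^0(\Tw_{\le 1}\cc)$, and track exactly which higher compositions are consumed. Audited carefully, this is what pins the required bound to $n\ge 13$; everything else is essentially routine transfer from the $A_\infty$-case.
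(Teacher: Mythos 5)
Your construction of $\cc$ is not the gluing category $\aa\coprod_M\bb$ but the \emph{arrow} category $\aa\xrightarrow{M}\bb$. The arrow category has objects $\Ob(\aa)\sqcup\Ob(\bb)$, exactly as you describe, and it remains an $A_n$-category (the arity of the bimodule operations matches that of $\aa$, $\bb$), so your stated drop to $A_{n-1}$ already signals the confusion. The gluing category is defined as the full subgraph of $(\aa\xrightarrow{M}\bb)^{\ast 2}$ on objects $(A\oplus B,\delta)$ with $A\in\Ob(\aa)$, $B\in\Ob(\bb)$, $\delta\in Z^1M(A,B)$ a Maurer--Cartan twisting; it is the insertion of the single $\delta$ into the argument lists that forces the drop to $A_{n-1}$ (Lemma~\ref{lem:Aness-gluing}).

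This is not a cosmetic slip: it is fatal to your pre-triangulatedness argument. You correctly identify the hard case as the cone of a closed degree-zero $f\in M(A,B)$ in $\Tw_{\le 1}\cc$, and then assert it ``must be quasi-representable by some object of $\cc$ itself.'' But with your object set, there is no such object: for a candidate $C\in\Ob(\aa)$ one has $\cc(B,C)=0$ while $\cc(B,B)\ne 0$, which is incompatible with the long exact sequence for a cone, and symmetrically for $C\in\Ob(\bb)$. In general the arrow category is simply not closed under cones; the entire point of passing to $(\aa\xrightarrow{M}\bb)^{\ast 2}$ is to \emph{adjoin} these cones as new objects, and those are exactly the objects of $\aa\coprod_M\bb$. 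Your later remark that ``every object of $\cc$ lies in exactly one of the two summands'' would, if true, make the semi-orthogonal decomposition degenerate whenever $M\ne 0$.

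Finally, even setting this aside, your outline does not supply the mechanism the paper uses to prove pre-triangulatedness, which is genuinely nontrivial. The paper shows that the composite $\aa\coprod_M\bb\to\Tw_{\le 1}(\aa\coprod_M\bb)\to \Tw_{\le 1}\aa\coprod_{\Tw_{\le 1}M}\Tw_{\le 1}\bb$ equals $(\Phi,I,\Phi^\ast)$ (Corollary~\ref{cor:phiIphiast}), then uses that $\Phi$ and $\Phi^\ast$ are each homotopic to $\Phi$ (Lemma~\ref{lem:phiast}) together with the functoriality of gluing under quasi-equivalences (Corollary~\ref{cor:quasi}) to conclude the composite is a quasi-equivalence; fully faithfulness of the second arrow then forces the first to be a quasi-equivalence. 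The bound $n\ge 13$ is what makes $\Tw_{\le 1}\aa$, $\Tw_{\le 1}\bb$ at least $A_6$ and $\Tw_{\le 1}M$ at least an $A_6$-bimodule, so that Corollary~\ref{cor:quasi} can be invoked with $m=3$. Your sketch gestures at ``tracking the arity budget'' but does not identify any of these ingredients, and with the incorrect object set the question it tries to answer (quasi-representability inside $\cc$) is the wrong one.
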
 
\subsection{The counterexample}
The counterexample we describe in Theorem \ref{intro:thm0} will be more specifically of the form $\Dscr=H^0(\aa\coprod_M\bb)$ where $\aa$, $\bb$ are pre-triangulated $A_\infty$-categories and $M$ is an $A_n$-$\bb$-$\aa$-bimodule. 
We will in fact assume that $M$ is obtained from an $A_{n-1}$-functor $F:\aa\r \bb$ via $M(A,B)=\bb(FA,B)$.
By
Theorems \ref{thm:intro1} and \ref{thm:intro2}, $\Dscr$ is canonically triangulated for $n\gg 0$.  Moreover any $A_\infty$-enhancement on $\Dscr$ induces $A_\infty$-enhancements $\aa'$, $\bb'$
on $H^0(\aa)$, $H^0(\bb)$ as well as an $A_\infty$-functor $F':\aa'\r \bb'$ such that $H^0(F')=H^0(F)$. One may hope to be able to prove that such $F'$ does not exist. This then implies that an
$A_\infty$-enhancement on $\Dscr$ does not exist.

We carry out this program with $\aa$, $\bb$ being the standard $A_\infty$-enhancements of $D(K)$, $D(R_\eta)$ for $\eta\neq 0$ (see \S\ref{sec:nomodel}).
The exact functor
\[
f:D(K)\r D(R_\eta):K\r K_\eta
\]
(defined using the fact that $D(K)$ is the category of graded $K$-vector spaces, equipped with its unique triangulation) lifts to an $A_{n-1}$-functor $F$: by \cite[Lemma 7.2.1]{RizzardoVdB2} this follows from the fact that $H^i(K_\eta)=0$ for $i=0,\ldots,-n+3$.
However, using the fact that $\eta\neq 0$ one deduces that $f$ does not lift to an $A_\infty$-functor, even if
we are allowed to change enhancements. This follows from the fact that the enhancement on $D(R_\eta)$ is actually unique in a weak, but sufficient, sense. This is proved using higher
Toda brackets (see Proposition \ref{lem:enhancement}). 
This finishes the proof that an $A_\infty$-enhancement on $\Dscr$ does not exist. 

\section{Acknowledgement} The authors thank Alexey Bondal and Dmitri Orlov for several interesting discussions around
the possibility of gluing a non-enhanceable functor to obtain a triangulated category without model.
\section{Notation and conventions}
\label{sec:notconv}
Below $k$ is an arbitrary field, except in \S\ref{sec:noenhance} where it will be subject to some restrictions. Unless otherwise specified, categories are pre-additive (enriched in abelian groups),
except when we are in an $A_n$-context. In that case we assume all objects and constructions are $k$-linear. 

Triangulated categories will be equipped with their canonical graded enhancement (see \S\ref{sec:trianggraded}).
The motivation for this is that the principal ``homotopy invariant'' associated to an $A_n$-category $\aa$ is $H^\ast(\aa)$ as $H^0(\aa)$ loses too much information in general. If
$\aa$ is pre-triangulated then $H^\ast(\aa)$ can be recovered from $H^0(\aa)$ together with a ``shift functor'' but, since the shift functor is not canonical (despite being unique up
to unique isomorphism), this extra step creates some complications, notably with signs, which are often unnecessary.
In any case, not all $A_n$-categories we will encounter will be pre-triangulated.

In situations where the shift functor is canonical we will use it. The most obvious case is graded objects over an abelian category $\Ascr$. If $A^\bullet=(A_i)_{i\in \ZZ}$ is such an object then
we put $\Sigma^n (A^\bullet)_i=A_{i+n}$. If $f:A^\bullet\r B^\bullet$ has degree $i$ then we put $\Sigma^n f =(-1)^{ni} f$. If $A^\bullet$ is a graded object over $\Ab$ and $x\in A_i$ then we write
$sx$ for $x$ considered as an element of $(\Sigma A^\bullet)_{i-1}$. The ``degree change operator'' $s$ makes it easy to find the correct sign in formulas using the Koszul convention.
\section{Preliminaries on triangulated categories}
\subsection{Graded categories}
\label{sec:notes}
For us  a \emph{graded category} is a category enriched in $\ZZ$-graded abelian groups.
Assume that $\aa$ is a graded category and let $X\in \Ob(\aa)$. A \emph{suspension} of $X$ is a pair $(Y,\eta)$ where $Y\in \Ob(\aa)$ and $\eta\in \aa(X,Y)_{-1}$ is invertible. Conversely
we call $(X,\eta)$ a \emph{desuspension} of $Y$. (De)suspensions are clearly functorial if they exist. So if  every object $X$ has a suspension $(Y,\eta)$ we may define a functor
$\Sigma:\aa\r \aa$ by putting $\Sigma X=Y$ and requiring for maps $f\in \aa(X,X')$ that the following diagram
\[
\xymatrix{
X\ar[r]^{\eta}\ar[d]_{f} & \Sigma X\ar[d]^{\Sigma f}\\
X'\ar[r]_{\eta} & \Sigma X'\\
}
\]
commutes up to a sign $(-1)^{|f|}$. It is clear that $\Sigma$ is unique up to unique equivalence.
We say that $\aa$ has a \emph{shift functor} $\Sigma$ if every object has a suspension and a desuspension and $\Sigma$ is as above. In this case $\Sigma$ is an auto-equivalence.
\subsection{Graded categories from pre-additive categories with shift functor}
\label{sec:linear}
Now assume that $\aa$ is a \emph{pre-additive category} (i.e.\ a category enriched in abelian groups) equipped with an auto-equivalence $\Sigma$. Then we can make $\aa$ into a graded category $\tilde{\aa}$ with the same objects 
by putting for $n\in \ZZ$
\[
\tilde{\aa}(A,B)_n:=\aa(A,\Sigma^n B)
\]
and with compositions
\[
\tilde{\aa}(B,C)_m\times \tilde{\aa}(A,B)_n\r \tilde{\aa}(A,C)_{m+n}:(g,f)\mapsto (-1)^{nm}\Sigma^n g\circ f
\]
We obtain that $\Sigma$ is a shift functor on $\tilde{\aa}$ in the sense of \S\ref{sec:notes}. 
\subsection{Triangles}
\label{sec:triangle}
A \emph{triangle} in a graded category $\aa$ is a diagram
\[
\xymatrix{
&C\ar[dl]_{h}^{(1)}&\\
A\ar[rr]_f&&B \ar[ul]_g
}
\]
with $A,B,C\in \Ob(\aa)$ and $|f|=|g|=0$, $|h|=1$. To save space a triangle will usually be written in linear form
\[
A\xrightarrow{f} B\xrightarrow{g} C\xrightarrow[(1)]{h}  A.
\]
If $\aa$ is equipped with a shift functor then a triangle can also be written in ``traditional'' form
\[
A\xrightarrow{f} B\xrightarrow{g} C\xrightarrow{h} \Sigma A.
\]
A morphism of triangles is given by three degree zero morphisms fitting into the obvious commutative diagram.
\subsection{Triangulated categories as graded categories}
\label{sec:trianggraded}
We will assume that the reader is familiar with the standard axioms for triangulated categories \cite{Verdier}.
If $(\Tscr,\Sigma)$ is triangulated category in the traditional sense 
then it can be made into a graded category as in \S\ref{sec:linear}. In this section we will reformulate
the usual axioms of triangulated categories in such a way that they
do not explicitly refer to a shift functor.
\begin{definition}
A \emph{triangulated} category $\Tscr$ is a graded  category equipped
with a collection of ``distinguished'' triangles
such that\footnote{Morphisms in a graded category whose degree is not specified are assumed to have degree zero. This convention is maintained throughout this document.}
\begin{enumerate}
\item[TR0] $\Tscr$ admits (possibly empty) finite direct sums and every object has a suspension and a desuspension.
\item[TR1] 
\begin{itemize}
\item
For any object $X\in \Ob(\Tscr)$ the following triangle is distinguished:
\[
 X\xrightarrow{\Id_X} X\xrightarrow{0_0} 0\xrightarrow[(1)]{0_1} X
\]
where $0$ is a zero object (which exists by TR0) and where $0_i$ is the zero morphism in $\Tscr(U,V)_i$.
\item
For any morphism $u : X\r Y$ in $\Tscr$ of degree zero, there is an object $Z$ (called a mapping cone of the morphism $u$) fitting into a distinguished triangle
\[
 X{\xrightarrow {u}}Y\xrightarrow{} Z\xrightarrow[(1)]{} X
\]
\item
Any triangle isomorphic to a distinguished triangle is distinguished.
\end{itemize}
\item[TR2]
If
\[
\xymatrix{
&Z\ar[dl]_{w}^{(1)}&\\
X\ar[rr]_u&&Y \ar[ul]_v
}
\]
is a distinguished triangle
then so are the two ``rotated triangles''
\[
\xymatrix{
&Z\ar[dl]_{\eta w}&\\
X'\ar[rr]^{(1)}_{-u\eta^{-1}}&&Y \ar[ul]_v
}
\]
\[
\xymatrix{
&Z'\ar[dl]_{-w\gamma}&\\
X\ar[rr]_u&&Y \ar[ul]_{\gamma^{-1}v}^{(1)}
}
\]
where $X\xrightarrow{\eta} X'$ is a suspension of $X$ and $Z'\xrightarrow{\gamma} Z$ is a desuspension of $Z$.
\item[TR3] A commutative diagram of solid arrows
\[
\xymatrix{
 X\ar[d]\ar[r]&Y\ar[d]\ar[r]&Z\ar@{.>}[d]\ar[r]^{(1)}& X\ar[d]\\ 
 X'\ar[r]&Y'\ar[r]&Z'\ar[r]_{(1)}& X' 
}
\]
in which the rows are distinguished can be completed with the dotted arrow.
\item[TR4]
For every upper cap of an octahedron (drawn on the left) there is a corresponding lower cap (drawn on the right).
\begin{equation}
\label{eq:octahedron1}
\xymatrix{
Z\ar[rrrr]&&&&{X'}\ar[ddll]_{(1)}\ar[dddd]^{(1)}\\
&&\bold{d}&&\\
&\circlearrowleft&Y\ar[lluu]\ar[ddrr]&\circlearrowleft&\\
&&\bold{d}&&\\
X\ar[uuuu]\ar[rruu]&&&&{Z'}\ar[llll]^{(1)}
}
\xymatrix{
Z\ar[rrrr]\ar@{.>}[ddrr]&&&&{X'}\ar[dddd]^{(1)}\\
&&\circlearrowleft&&\\
&\bold{d}&\textcolor{gray}{Y'}\ar@{.>}[rruu]\ar@{.>}[ddll]_{(1)}&\bold{d}&\\
&&\circlearrowleft&&\\
X\ar[uuuu]&&&&{Z'}\ar[llll]^{(1)}\ar@{.>}[lluu]
}
\end{equation}
such that in addition the compositions
$Y\r Z\r Y'$ and $Y\r Z'\r Y'$ are the same and similarly the  compositions $Y'\r X\r Y$ and $Y'\r X'\r Y$ are the same.
In the diagram the triangles marked $\bold{d}$ are distinguished and those marked with
$\circlearrowleft$  are commutative,
\end{enumerate}
\end{definition}
\section{Preliminaries about $A_n$-categories}
\label{sec:prelimAn}
Let $n\ge 0$. As a general principle, for any $A_\infty$-notion there is a corresponding $A_n$-notion in which we
consider only operations with $\le n$ arguments and we  
require the axioms to only hold for expressions with $\le n$ arguments. Facts about $A_\infty$-categories
remain valid for $A_n$-categories as long as they only involve such expressions.
We discuss this below. Throughout we
place ourselves in the strictly unital context.
\subsection{$A_n$-categories and functors}
\label{sec:Ancatsandfunctors}
\begin{definition}[\cite{Lefevre}]
An $A_n$-category $\aa$ is the data of:
\begin{itemize}
\item A set of objects $\Ob(\aa)$.
\item For each couple $(A,A')$ of objects of $\aa$, a graded vector space of morphisms $\aa(A,A')$.
We call $\aa(A,A')$ the $\Hom$-space between $A$ and $A'$. A (homogeneous) element of $\aa(A,A')$ is called a morphism
(or sometimes an arrow).
\item For each sequence $(A_0,\ldots,A_i)$ of objects of $\aa$ with $1\le i\le n$, ``higher'' compositions
\[
b_i:\Sigma\aa(A_{i-1},A_i)\otimes \ldots \otimes \Sigma\aa(A_0,A_1)\r \Sigma \aa(A_0,A_i)
\]
of degree 1 verifying $(*)_i$ of \cite[definition 1.2.1.1]{Lefevre}.
\item For each object $A$ an \emph{identity (or unit) element} $\Id_A\in \aa(A,A)_0$ satisfying
\begin{align*}
b_i(\dots,s\Id_A,\dots)&=0&&\text{(for $i=1$ and $3\le i\le n$)}\\
b_2(sf,s\Id_A)&=(-1)^{|f|}sf&&\text{if $n\ge 2$}\\
b_2(s\Id_A,sg)&=sg&&\text{if $n\ge 2$}
\end{align*}
\end{itemize}
\end{definition}
If the identities hold for every $i$ then we get the notion of an 
$A_{\infty}$-category. 
Below
an $A_n$-category will be silently considered as an $A_m$-category for
all $m\le n$.

As for $A_\infty$-categories is it sometimes more convenient to
express the higher compositions as operations
\[
m_i:\aa(A_{i-1},A_i)\otimes \ldots \otimes \aa(A_0,A_1)\r  \aa(A_0,A_i)
\]
of degree $2-n$ where $(m_n)_n$ and $(b_n)_n$ are related by
 $b_n=s^{-n+1}m_n$ 
so that in particular using the Koszul convention we obtain
\begin{equation}
\label{eq:signs}
\begin{aligned}
b_1(sf)&=-sm_1(f)\\
b_2(sg,sf)&=(-1)^{|g|}sm_2(g,f)\\
b_3(sh,sg,sf)&=(-1)^{|g|+1} s m_3(h,g,f)
\end{aligned}
\end{equation}
Sometimes we write $d\!f=m_1(f)$ and $gf=m_2(g,f)$.
It is useful to consider the case of low $n$. 
\begin{enumerate}
\item An $A_0$-category is simply a directed graph (with distinguished ``identity arrows'') whose $\Hom$-spaces are graded vector spaces. We call this a \emph{graded graph}. 
\item An $A_1$-category is a graded graph whose arrows form complexes of vector spaces (the differential is given by $m_1$ and it annihilates identity arrows). 
We call this a \emph{DG-graph}. A DG-graph $\aa$ has an associated graded graph
$H^\ast(\aa)$ obtained by replacing the $\Hom$-spaces in $\aa$ by their cohomology. 
A morphism $f$ in $\aa$ is called \emph{closed} if $m_1(f)=0$. We denote by $Z^0\aa$ the $k$-linear graph which has the same objects as $\aa$ and whose morphisms are the closed morphisms of degree zero.
\item An $A_2$ category is a DG-graph equipped with a bilinear
  composition of arrows given by $m_2$ (for which the identity arrows behave as unit elements) which is compatible with $m_1$. In 
particular $m_2$ descends to well-defined operations on $H^\ast(\aa)$ and $Z^0\aa$.
\item For $n\ge 3$ the composition on $H^\ast(\aa)$ induced by $m_2$ is associative and hence in particular $H^\ast(\aa)$ is a graded category.
\end{enumerate}

\begin{definition}\label{A_n functors}
An $A_n$-functor $f:\aa\to \bb$ between two $A_n$-categories $\aa$ and $\bb$  is the data of
\begin{itemize}
\item A map on objects $f:\Ob(\aa) \to \Ob(\bb)$.
\item For each sequence $(A_0,\ldots,A_i)$ of objects of $\aa$ with $i\leq n$, compositions
\[
f_i:\Sigma\aa(A_{i-1},A_i)\otimes \ldots \otimes \Sigma\aa(A_0,A_1) \to \Sigma\bb(f(A_0),f(A_i))
\]
of degree zero verifying $(**)_i$ of \cite[definition 1.2.1.2]{Lefevre} for $i=1,\ldots,n$.
\item If $n\ge 1$ then for each $A\in \Ob(\aa)$ we have $f_1(s\Id_A)=s\Id_{f(A)}$ and $f_n(\ldots,s\Id_A,\ldots)=0$ for $n\ge 2$.
\end{itemize}
\end{definition}
Again it is instructive to unravel this definition for small values of $n$. 
\begin{enumerate}
\item An $A_0$-functor is just a map between sets of objects (there is no compatibility with morphisms). 
\item An $A_1$-functor $f:\aa\r \bb$ is a morphism of DG-graphs.
In particular we have an induced morphism of graded graphs $H^\ast(f):=H^\ast(f_1)$.
\item If $f$ is an $A_n$-functor for $n\ge 2$ then $H^\ast(f)$ is compatible
with compositions. 
In particular, if $f$ is an $A_2$-functor between $A_3$-categories 
then $H^\ast(f)$ is a graded functor.
\end{enumerate}
Like $A_\infty$-notions one may also approach $A_n$-notions via cocategories. Let $\aa$ be a graded graph. Then $(B\aa)_{\le n}$ is the graded cocategory with $\Hom$-spaces
\begin{equation}
\label{eq:bar}
\begin{aligned}
(B\aa)_{\le n}(A,B)&=\bigoplus_{i=1}^{n} (\Sigma \aa)^{\otimes i}(A,B)\\
(\Sigma \aa)^{\otimes i}(A,B)&=\bigoplus_{A=A_0,\ldots,A_{i}=B} \Sigma\aa(A_{i-1},A_i)\otimes \ldots \otimes \Sigma\aa(A_0,A_1)
\end{aligned}
\end{equation}
equipped with the usual \emph{bar coproduct}. I.e. if $(sf_{i-1}|\cdots|sf_0):=
sf_{i-1}\otimes\cdots\otimes sf_0\in (\Sigma \aa)^{\otimes i}$ then
\[
\Delta(sf_{i-1}|\cdots|sf_0)=\sum_{j=1}^{i-1} (sf_{i-1}|\cdots |sf_j)\otimes (sf_{j-1}|\cdots |sf_0)
\]
If we ignore the compatibility with units then  an $A_n$-structure on $\aa$ is the same as a \emph{codifferential} on  $(B\aa)_{\le n}$, i.e. a coderivation~$b$ of degree one satisfying
$b\circ b=0$.  Similarly, ignoring units, an $A_n$-functor $f:\aa \r \bb$ is the same as a cofunctor
$(B\aa)_{\le n}\r (B\bb)_{\le n}$ commuting with the codifferentials on $(B\aa)_{\le n}$ and $(B\bb)_{\le n}$. With this observation one may define the composition of $A_n$-functors simply as the composition of the corresponding cofunctors.
\subsection{Some auxilliary definitions}
\begin{definition} \label{def:auxilliary}
Let $f:\aa\r \bb$ be an $A_m$-functor between $A_n$-categories, for $m\le n$. Then
\begin{enumerate}
\item $f$ is \emph{strict} provided $m\ge 1$ and $f_i=0$ for $i\ge 2$. Equivalently $f_1$ commutes with higher compositions
with arity at most $m$.
\item $f$ is \emph{fully faithful} if it is strict and for all $A,A'\in \Ob(\aa)$ we have that
$
\aa(A,A')\r \bb(fA,fA')
$
is an isomorphism of graded vector spaces.
\item $f$ is a \emph{quasi-fully faithful} if $m\ge 2$, $n\ge 3$ and $H^\ast(f):H^\ast(\aa)\r H^\ast(\bb)$ is fully faithful.
\item $f$ is a \emph{quasi-isomorphism} if $m\ge 2$, $n\ge 3$ and $H^\ast(f):H^\ast(\aa)\r H^\ast(\bb)$ is an isomorphism.
\item $f$ is a \emph{quasi-equivalence} if $m\ge 2$, $n\ge 3$ and $H^\ast(f):H^\ast(\aa)\r H^\ast(\bb)$ is an equivalence. 
\end{enumerate}
\end{definition}
\subsection{The category of functors between $A_n$-categories} 
Here we discuss some concepts from \cite[Chapter 8]{Lefevre}. 
As indicated above, the (decomposable) arrows of $(B\aa)_{\le n}$ are usually written as $(sf_{i-1}|\cdots|sf_0)$ for a path of 
$1\le i\le n$ composable arrows $f_0,\ldots,f_{i-1}$ in $\aa$.
We let $(B^+\aa)_{\le n}$ be the coaugmented cocategory obtained  by also admitting empty paths $()_A$ 
starting and ending  in $A\in \Ob(\aa)$ (see \S2.1.2 in loc. cit.).
More precisely we have
\[
(B^+\aa)_{\le n}(A,B)=
\begin{cases}
(B\aa)_{\le n}(A,B)&\text{if $A\neq B$}\\
k()_A\oplus (B\aa)_{\le n}(A,A)&\text{if $A=B$}.\\
\end{cases}
\]
with $|()_A|=0$.
The coproduct $\Delta^+(t)$ for $t\in (B\aa)_{\le n}(A,B)$  is defined as 
\[
\Delta^+(t)=()_B\otimes t+t\otimes ()_A+\Delta(t),
\]
where $\Delta$ is the coproduct on $(B\aa)_{\le n}$ and furthermore
$
\Delta^+(()_A)=()_A\otimes ()_A
$.
If $(B\aa)_{\le n}$ is equipped with a codifferential $b$ then we extend it to $(B^+\aa)_{\le n}$
by putting $b(()_A)=0\in \aa(A,A)$.

Given two $A_n$-categories $\cc$ and $\dd$, 
denote by $A_n(\cc,\dd)$ the set of $A_n$-functors $\cc\to \dd$. Now
assume that $\aa$, $\bb$ are respectively $A_m$, $A_n$-categories for
$m\le n-1$.  We will equip $A_m(\aa,\bb)$ with the structure of an
$A_{n-m}$-category as follows:

\begin{definition}[Morphisms in $A_m(\aa,\bb)$]\label{A_n morphisms} 
Assume $m\le n-1$. Let $f_1,f_2:\aa\r \bb$ be $A_m$-functors. We view these as cofunctors $(B^+\aa)_{\le m}\r B\bb_{\le n}$ by putting $f_{i,0}()_A:=f_{i,0}(()_A)=0$.
 Then  
\begin{align*}
\Sigma\Hom(f_1,f_2)=\{h\in\coDer_{f_1,f_2}(B^+\aa_{\leq m},B\bb_{\le n})\mid \forall A\in \Ob(\aa):h(\cdots\otimes s\Id_A\otimes\cdots)=0\}
\end{align*}
\end{definition}
Here $\coDer_{f_1,f_2}((B^+\aa)_{\leq m},B\bb_{\le n})$ consists of collections  $k$-linear morphisms $h(A,A'):B^+\aa_{\le m}(A,A')\r B\bb_{\le n}(f_1(A),f_2(A'))$
such that $h=h(A,A')_{A,A'}$
satisfies the following identity for $u\in (B^+\aa)_{\le m}$ 
\[
\Delta (h(u))=\sum_{(u)} (f_2\otimes h+h\otimes f_1)(u_{(1)}\otimes u_{(2)}).
\]
where (using the Sweedler notation) $\Delta^+(u)=\sum_{(u)} u_{(1)}\otimes u_{(2)}$. It follows that $h\in \Sigma\Hom(f_1.f_2)$ is determined by the ``Taylor coefficients''
\begin{equation}
\label{eq:homotopy}
h_k:\Sigma \aa(A_{k-1},A_k)\otimes \Sigma\aa(A_{k-2},A_{k-1})\otimes\cdots
\otimes \Sigma\aa(A_0,A_1)\r \Sigma\bb(f_1(A_0),f_2(A_k))
\end{equation}
for $1\le k\le m$ as well as for each $A\in \Ob(\aa)$ an element $h()_A:=h_0(()_A)\in \Sigma\bb(f_1(A),f_2(A))$ and the corresponding coderivation is given by
\begin{equation}
\label{eq:hformula}
h=\sum_{\sum_{t=1}^q j_t+\sum_{s=1}^p i_s+k\le m} f_{2,j_q}\otimes \cdots\otimes f_{2,j_1}\otimes h_k\otimes f_{1,i_p}\otimes\cdots \otimes f_{1,i_1}
\end{equation}
where the right-hand side is restricted to terms which have $\le m$ arguments. Note that $h$ sends $(B^+\aa)_{\le m}$ to $(B\bb)_{\le m+1}$ (as the $f$'s take at least one
argument but $h_0$ takes zero arguments). So
since $m\le n-1$, $h$ is indeed well defined.
\begin{definition}[The differential on $A_m(\aa,\bb)$] If $m\le n-1$ and $h\in \Sigma\Hom(f_1,f_2)$
then $b_1(h)=[b,h]=b\circ h-(-1)^{|h|} h\circ b$. Concretely
\begin{multline}
\label{eq:higher2}
b_1({{h}})_k
=\sum_{\sum_{t=1}^q j_t+\sum_{s=1}^p i_s+l=k} b_{p+q+1}\circ(f_{2,j_q}\otimes\cdots\otimes f_{2,j_1}
 \otimes {h_l}\otimes f_{1,i_p}\otimes\cdots\otimes f_{1,i_1})\\-(-1)^{|{{h}}|}\sum_{a_0+a_1+l=k} {{h}}_{1+a_0+a_1}\circ (\Id^{\otimes a_0}\otimes b_l\otimes \Id^{\otimes a_1})
\end{multline}
\end{definition}
\begin{definition}[The higher multiplications on $A_m(\aa,\bb)$]
Assume we have morphisms
\[
f_0\xrightarrow{{{h}}_1}f_1\xrightarrow{{{h}}_2}\cdots \xrightarrow{{{h}}_{k}} f_k
\]
represented by
\[
h_i\in \coDer_{f_{i-1},f_i}(B^+\aa_{\le m}, B\bb_{\le n})
\]
and assume $2\le k\le n-m$. Then we put
\begin{multline}
\label{eq:higher1}
{{h}}_{k}\cup\cdots\cup {{h}}_1=\sum f_{k,i_{k,p_k}}\otimes\cdots \otimes f_{k,i_{k1}}
 \otimes {{h}}_{k,u_k}\otimes f_{k-1,i_{k-1,p_{k-1}}}\otimes\cdots\\\cdots \otimes f_{k-1,i_{k-1,1}} \otimes\cdots\otimes  f_{1,i_{1p_1}}\otimes\cdots\otimes f_{1,i_{11}}
\otimes {{h}}_{1,u_1}\otimes  f_{0,i_{0p_0}}\otimes\cdots \otimes f_{0,i_{01}} 
\end{multline}
and $
b_k(h_k,\ldots,h_1)_l=(b\circ(h_k\cup\dots\cup h_1))_l
$
where $(-)_l$ denotes Taylor coefficients.
\end{definition}
Note that on the right-hand side of \eqref{eq:higher1} the $f$'s take at least one argument but the $h$'s may take zero arguments. 
It follows that 
${h}_{k}\cup\cdots\cup {{h}}_1$ maps $B^+\aa_{\le m}$ to $B\bb_{\le m+k}$, and hence by the hypothesis $k\le n-m$
 is a well defined element
of $\Hom(B^+\aa_{\le m}, B\bb_{\le n})$. It is however not a coderivation. 
Instead is it inductively characterized by the following property for $u\in (B^+\aa)_{\le m}$
(using again the Sweedler notation)
\begin{multline*}
\Delta(({h}_{k}\cup\cdots\cup {{h}}_1)(u))=\sum_{(u)} \biggl(f_k\otimes (h_k\cup\cdots\cup h_1)+(h_k\cup\cdots\cup h_1)\otimes f_1
\\+\sum_{1\le j\le k} 
(h_{k}\cup\cdots\cup h_{j+1})\otimes (h_{j}\cup\cdots\cup h_{1})\biggr)(u_{(1)}\otimes u_{(2)})
\end{multline*}
One checks
\begin{lemma}  \label{lem:An-m}
The collection of maps $(b_i)_{i=1,\ldots,n-m}$ makes $A_m(\aa,\bb)$ into an $A_{n-m}$-category.
\end{lemma}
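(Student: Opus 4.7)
The plan is to transport the standard proof that $A_\infty(\aa,\bb)$ is an $A_\infty$-category (as developed in \cite[Chapter 8]{Lefevre}) into the $A_n$-setting by tracking arities carefully. Everything is organized cocategorically: an $A_m$-functor $f \in A_m(\aa,\bb)$ is a cofunctor $F: (B^+\aa)_{\le m} \to (B\bb)_{\le n}$ commuting with the codifferentials in the appropriate arity range, and an element $h \in \Sigma\Hom(f_1,f_2)$ is an $(F_1,F_2)$-coderivation $(B^+\aa)_{\le m} \to (B\bb)_{\le n}$ vanishing on unit arrows. In this picture the single identity $b^2=0$ on $(B\bb)_{\le n}$ encodes the whole $A_n$-structure on $\bb$.

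First I would verify that each $b_k(h_k,\ldots,h_1)$ defines a legitimate element of $\Sigma\Hom(f_0,f_k)$ for $1 \le k \le n-m$. The cup product $H := h_k \cup \cdots \cup h_1$ of \eqref{eq:higher1} maps $(B^+\aa)_{\le m}$ into $(B\bb)_{\le m+k}$, because each $f_{i,j}$-block absorbs at least one input while each $h_j$-slot absorbs at least zero, so the hypothesis $k \le n-m$ is exactly what makes $b \circ H$ land inside $(B\bb)_{\le n}$. The Taylor coefficients of $b \circ H$ are then taken via \eqref{eq:hformula} as the definition of the coderivation $b_k(h_k,\ldots,h_1)$; vanishing on unit arrows is inherited from the unit conditions on the $f_{i,j}$ and $h_j$ and from strict unitality of $b$.

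Next I would check the $A_{n-m}$-relations. For $1 \le k \le n-m$, the relation to prove is the vanishing of the usual sum of nested compositions $b_\bullet(\id^{\otimes \bullet}\otimes b_\bullet \otimes \id^{\otimes \bullet})$ of total arity $k$ on $(h_k,\ldots,h_1)$. In the cocategorical language these nested compositions are exactly the Taylor coefficients of $b \circ b \circ H$, once one expands using the coderivation property of $b$ together with the partial-coderivation identity for $H$ displayed just above Lemma \ref{lem:An-m}. The compatibility of $b$ with each cofunctor $F_i$, which is part of the $A_m$-functor axiom in arities $\le m$, cancels the cross-terms in which $b$ acts entirely inside one $f_{i,j}$-block; what survives is precisely the left-hand side of the $A_{n-m}$-relation. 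Since $H$ lands in $(B\bb)_{\le m+k} \subseteq (B\bb)_{\le n}$ by the arity bound and $b^2=0$ there, the sum vanishes.

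The main obstacle, and the only real content of the argument, is the combinatorial bookkeeping: writing out \eqref{eq:higher1} and the double $b$-expansion cleanly enough to see that every term in the $A_{n-m}$-relation is produced exactly once with the correct Koszul sign coming from the suspension shifts. The arity bound $k \le n-m$ is what permits the entire $A_\infty$-calculation to take place inside the truncated cocategory $(B\bb)_{\le n}$; dropping it would force us to invoke $A_{n'}$-relations in $\bb$ for $n' > n$, which are simply not available.
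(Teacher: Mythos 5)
Your proposal is correct and captures exactly what the paper is implicitly gesturing at with its ``One checks'': the cocategorical formalism, the arity count showing that $H=h_k\cup\cdots\cup h_1$ maps $(B^+\aa)_{\le m}$ into $(B\bb)_{\le m+k}\subseteq (B\bb)_{\le n}$ when $k\le n-m$, and the reduction of the $A_{n-m}$-relations to $b^2=0$ within that truncation. One small imprecision worth flagging: saying the nested compositions are ``exactly the Taylor coefficients of $b\circ b\circ H$'' glosses over the fact that $b_1$ on $A_m(\aa,\bb)$ is the two-sided commutator $[b,-]$, not pre-composition with $b$; consequently the expansion also uses $b^2=0$ on $(B^+\aa)_{\le m}$ (automatic since $m\le n$) and not only on $(B\bb)_{\le n}$, together with the Leibniz rule for the codifferential and the displayed coalgebra identity for the cup product. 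That said, the decisive point — that the bound $k\le n-m$ confines every term of the relation to arities where the $A_n$-structure of $\bb$ is actually defined — is exactly right, and that is the entire content of the lemma beyond the $A_\infty$-case from \cite[Chapter 8]{Lefevre}.
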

\subsection{Homotopies and homotopic functors}
\label{sec:homhom}
Let $\aa$, $\bb$ be $A_n$-categories, let $1\le m\le n-1$ (thus $n\ge 2$) and let $h\in \Sigma Z^0A_m(\aa,\bb)(f_1,f_2)$. Then $h\in \coDer_{f_1,f_2}(B^+\aa_{\le m},B\bb_{\le n})_{-1}$ and $[b,h]=0$.
Let $(h_k)_{k=0,\ldots,m}$ be the Taylor coefficients of $h$. Specializing \eqref{eq:higher2} to $k=0,1$ we find that $h_0()_A\in (\Sigma \bb(f_1A,f_2A))_{-1}=\bb(f_1A,f_2A)_0$ satisfies
$d(h_0()_A)=0$ and
\[
b_1\circ h_1+b_2\circ (h_0\otimes f_1+f_2\otimes h_0)+h_1\circ b_1=0
\]
Evaluating this on $st$ for $t\in \aa(A,B)$ we find
\begin{equation}
\label{eq:h1condition}
b_1(h_1(st))+b_2(h_{0}()_B,f_1(st))+(-1)^{|t|+1}b_2(f_2(st),h_{0}()_A)+h_1(b_1(st))=0
\end{equation}
Put $h_0()_A=sh_{0,A}$. Using the usual sign convention $h_1(st)=-sh_1(t)$, etc\dots 
together with \eqref{eq:signs} this may
be rewritten as
\[
m_1(h_1(t))+m_2(h_{0,B},f_1(t))+(-1)^{|t|+1}(-1)^{|t|}m_2(f_2(t),h_{0,A})+h_1(m_1(t))=0
\]

So we find in particular that $H^\ast(h_0)$ defines a natural transformation $H^\ast(f_1)\r H^\ast(f_2)$. 
\begin{definition} Let $h,f_1,f_2$ be as above but assume $n\ge 3$. We say that $h$ is a \emph{homotopy} $h:f_1\r f_2$
if $H^*(h_0)$ is a natural isomorphism, i.e.\ if for all $A\in \Ob(\aa)$, $H^\ast(h_{0,A})\in H^\ast(\aa)(A,A)$
is invertible. We say that $f_1$, $f_2$ are \emph{homotopic} if there exists a homotopy $h:f_1\r f_2$. 
\end{definition}
\begin{lemma} \label{lem:invertible}
Assume $1\le m\le n-3$. Then $h:f_1\r f_2$ is a homotopy if and only if $H^\ast(h)$ is invertible
in $H^\ast(A_m(\aa,\bb))$ (the latter is a genuine category because of the restriction on $m,n$). In particular the relation of being homotopic is an equivalence relation.
\end{lemma}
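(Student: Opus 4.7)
Since $m \le n-3$ we have $n-m \ge 3$, so by Lemma~\ref{lem:An-m} the $A_{n-m}$-category $A_m(\aa,\bb)$ carries enough structure for $H^\ast(A_m(\aa,\bb))$ to be a genuine graded category. My plan is to handle the two directions separately.

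For the easy direction (invertibility implies homotopy), fix $A \in \Ob(\aa)$ and consider the assignment $\mathrm{ev}_A\colon A_m(\aa,\bb) \to \bb$ sending an $A_m$-functor $f$ to the object $f(A)$ and sending a morphism $h \in A_m(\aa,\bb)(f_1,f_2)$ to its zeroth Taylor coefficient $h_{0,A}$ (where $h(()_A) = sh_{0,A}$). I claim that $\mathrm{ev}_A$ is a strict $A_{n-m}$-functor. Indeed, evaluating the defining formulas \eqref{eq:higher1} and \eqref{eq:higher2} on the empty word $()_A$ and extracting the length-one output, every factor $f_{i,p}$ with $p \ge 1$ must consume at least one argument and so contributes nothing, since $()_A$ supplies none and $f_{i,0}(()_A) = 0$ by the convention introduced before Definition~\ref{A_n morphisms}. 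The surviving terms reproduce exactly the formulas for the compositions $b_k$ and the differential $b_1$ in $\bb$ applied to the family $(h_{i,0,A})$, proving the claim. Passing to cohomology, $H^\ast(\mathrm{ev}_A)$ is a graded functor and so preserves invertible morphisms: if $H^\ast(h)$ is invertible in $H^\ast(A_m(\aa,\bb))$, then $H^\ast(h_{0,A}) = H^\ast(\mathrm{ev}_A)(H^\ast(h))$ is invertible in $H^\ast(\bb)$ for every $A$, i.e.\ $h$ is a homotopy.

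For the converse I construct an inverse $h' \in \Sigma Z^0 A_m(\aa,\bb)(f_2,f_1)$ by inductively choosing Taylor coefficients $h'_0, \ldots, h'_m$. At level $0$, for each $A$ choose a degree-zero closed $h'_{0,A} \in \bb(f_2A, f_1A)$ with $[h'_{0,A}] = [h_{0,A}]^{-1}$; naturality of $H^\ast(h_0)$ descends to naturality of the inverse family. At level $k \ge 1$, the restriction of $b_1(h') = 0$ to Taylor coefficient $k$, read off from \eqref{eq:higher2}, takes the schematic form
$$
m_1 \circ h'_k \pm h'_k \circ m_1 \;=\; -\Phi_k(h'_0,\ldots,h'_{k-1};\; f_1, f_2;\; b_\bullet),
$$
where $\Phi_k$ is built from the already-chosen $h'_{<k}$, the Taylor coefficients of $f_1, f_2$, and the compositions $b_\bullet$ in $\bb$. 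The identity $b \circ b = 0$ together with the inductive closedness of $h'_{<k}$ forces $\Phi_k$ to be $m_1$-closed, hence it defines a class $[\Phi_k] \in H^\ast(\bb)$; this class vanishes because the naturality of $H^\ast(h_0)$, $H^\ast(h'_0)$ and the relation $[h_{0,A}][h'_{0,A}] = [\Id]$ express $[\Phi_k]$ as a tautological zero, exactly as in the standard $A_\infty$-proof of invertibility for pointwise-invertible natural transformations. Hence $h'_k$ can be chosen to solve the equation. A parallel induction (or absorbing boundaries into the $h'_k$ during their construction) shows that $b_2(h,h') - s\Id_{f_2}$ and $b_2(h',h) - s\Id_{f_1}$ are $b_1$-exact in $A_m(\aa,\bb)$, so $[h']$ is a two-sided inverse of $[h]$ in $H^\ast(A_m(\aa,\bb))$. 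The final sentence of the lemma is then immediate, since being inverse to each other in a graded category is visibly an equivalence relation on objects.

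I expect the main obstacle to be the vanishing of $[\Phi_k]$ at each inductive step: this is the usual careful bookkeeping of the codifferential identity $b\circ b=0$, restricted to expressions of arity $\le m$. The hypothesis $m \le n-3$ is used exactly here and in the preceding paragraph: it ensures both that $H^\ast(A_m(\aa,\bb))$ is a category (requiring $A_3$-structure on $A_m(\aa,\bb)$) and that the binary composition $b_2$ and the associativity-type identities used to compose $h$ with $h'$ and to run the obstruction theory are available and well-behaved.
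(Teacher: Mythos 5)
Your two directions do not match the paper's proof, and the ``hard'' direction has a genuine gap.

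For the easy direction (invertibility implies homotopy) your evaluation-functor argument is correct but is considerably heavier than necessary: the paper simply notes that $(hh')_0 = h_0 h'_0$ at the level of zeroth Taylor coefficients, which immediately gives the implication. That said, nothing is wrong here.

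For the converse there is a real problem. You construct a \emph{closed} $h'$ with $[h'_{0,A}] = [h_{0,A}]^{-1}$ by an arity-by-arity obstruction argument, and then assert that $b_2(h,h') - s\Id_{f_2}$ is $b_1$-exact ``by a parallel induction (or absorbing boundaries into the $h'_k$).'' This does not follow. The set of closed $h'$ with the prescribed $[h'_0]$ is large (at each arity $h'_k$ is free up to the addition of an $m_1$-cocycle), and only a proper subset of such $h'$ can actually satisfy $[m_2(h,h')] = [\Id_{f_2}]$. Your induction solves the closedness equations first, without keeping track of the invertibility constraint, so the $h'$ you end up with need not be a right inverse of $h$ in $H^\ast(A_m(\aa,\bb))$; the ``parallel induction'' would have to be interleaved with the first one, with the freedom in choosing $h'_k$ used to simultaneously kill the obstruction for closedness \emph{and} for $m_2(h,h')\sim \Id$, and whether these two families of obstructions can be killed compatibly is exactly what requires proof. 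The paper avoids this issue entirely: it considers the chain map $S\colon A_m(\aa,\bb)(f_2,f_1)\to A_m(\aa,\bb)(f_2,f_2)$, $h'\mapsto m_2(h,h')$, filters both sides by Taylor arity, and observes that on the associated graded $S$ acts by cup with $h_0$, hence is a quasi-isomorphism on each graded piece (since $H^\ast(h_0)$ is invertible); a spectral sequence comparison then makes $S$ a quasi-isomorphism, so $\Id_{f_2}$ has a preimage in $H^0$. This produces directly an $h'$ with $m_2(h,h')\sim \Id_{f_2}$, after which two-sided invertibility follows by the standard iteration (since $h'_0$ is again invertible). Your filtration-by-arity intuition is the right one, but it should be packaged as a quasi-isomorphism statement about $S$, not as a construction of a closed $h'$ followed by an unproven claim about its invertibility.
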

\begin{proof} We have $(hh')_0=h_0h'_0$. So if $h$ is invertible then it is a homotopy.
Assume now $h_0$ is invertible.
Consider the morphism of complexes
\[
S:A_m(\aa,\bb)(f_2,f_1)\r A_m(\bb,\bb)(f_2,f_2):h'\mapsto m_2(h,h')
\]
Using an appropriate spectral sequence one finds that $S$ is a quasi-isomorphism. Hence
there exists $h'\in Z^0A_m(\aa,\bb)(f_2,f_1)$ such that $m_2(h,h')-\Id_{f_2}$ has zero image in $H^\ast(A_m(\aa,\bb)(f_2,f_2))$.
\end{proof}
\subsection{Inverting quasi-equivalences}
We prove some $A_n$-versions of results which are well-known in the $A_\infty$-setting (e.g.\ \cite[Th\'eor\`eme 9.2.0.4]{Lefevre}).
\begin{lemma} \label{lem:quasi-inverse}
Let $\aa$, $\bb$ be $A_n$ categories for $n\ge 3$ and let $f:\aa\r\bb$ be an $A_n$-functor
which is a quasi-equivalence.
There exists an $A_{n-1}$-quasi-equivalence $g:\bb\r \aa$ such
that $fg$
and $\Id_\bb$ are homotopic. Moreover the quasi-inverse $H^*(g_1)$ to $H^\ast(f_1)$ may be chosen freely.
\end{lemma}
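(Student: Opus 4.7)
The plan is to construct $g$ and a homotopy $h:fg\Rightarrow \Id_\bb$ simultaneously, by induction on the arity of their Taylor coefficients.

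For the base step, essential surjectivity of $H^*(f_1)$ lets us pick for each $B\in \Ob(\bb)$ an object $g(B)\in \Ob(\aa)$ together with a closed degree zero element $h_{0,B}\in \bb(fg(B),B)$ whose cohomology class is the prescribed isomorphism. For every $B,B'\in \Ob(\bb)$, conjugation by $h_{0,\bullet}$ identifies $H^*(\bb)(B,B')$ with $H^*(\bb)(fg(B),fg(B'))$, and post-composition with the prescribed quasi-inverse determines a degree zero map $H^*(\bb)(B,B')\to H^*(\aa)(g(B),g(B'))$. Since $f_1:\aa(g(B),g(B'))\to \bb(fg(B),fg(B'))$ is a quasi-isomorphism of complexes, a standard argument produces a chain map $g_1$ realising this map on cohomology, together with a chain homotopy which serves as $h_1$.

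For the inductive step, suppose the Taylor coefficients $(g_i,h_i)_{i<k}$ have been constructed and satisfy all $A_\infty$-functor and homotopy relations using only such coefficients, for some $2\le k\le n-1$. The relations at arity $k$ take the schematic form
\begin{equation*}
b_1g_k+g_k\circ b_1=P_k,\qquad b_1 h_k+h_k\circ b_1=f_1\circ g_k+Q_k,
\end{equation*}
where $P_k$, $Q_k$ are universal polynomials in the lower-arity data and in the $b_i$'s of $\aa$ and $\bb$. Assembling these into a single equation in the mapping cone of the chain map $f_1:\aa(g(B_0),g(B_k))\to \bb(fg(B_0),fg(B_k))$, the right-hand side is a cocycle in $\cone(f_1)$; this cocycle condition is exactly what the $A_\infty$-relations of $\aa$, $\bb$, $f$ together with the inductive hypothesis guarantee. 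Since $f_1$ is a quasi-isomorphism, $\cone(f_1)$ is acyclic, so this cocycle is a coboundary, yielding a compatible pair $(g_k,h_k)$.

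The induction terminates at $k=n-1$ because the homotopy relation at arity $k$ involves the operation $b_{k+1}$ of $\bb$ (arising from contracting the arity-$0$ piece $h_0$ into a string of $k+1$ copies of an identity functor), requiring $k+1\le n$. The output $g$ is then an $A_{n-1}$-functor with $H^*(g_1)$ equal to the prescribed quasi-inverse of $H^*(f_1)$, hence a quasi-equivalence. The assembled $h$ lies in $Z^0A_{n-1}(\bb,\bb)(fg,\Id_\bb)$ and has $H^*(h_0)$ an invertible natural transformation by construction, so it is a homotopy $fg\Rightarrow \Id_\bb$ in the sense of \S\ref{sec:homhom}. The main obstacle is the inductive step: one must package the two arity-$k$ equations correctly into $\cone(f_1)$ with consistent signs, and verify — using the inductive $A_\infty$-relations — that the combined obstruction is genuinely a cycle there, so that acyclicity of $\cone(f_1)$ can be invoked.
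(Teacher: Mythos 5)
Your overall strategy — construct the Taylor coefficients $(g_k,h_k)$ by induction, using $f_1$ being a quasi-isomorphism to kill the arity-$k$ obstruction — is the same one the paper uses. But there is a genuine gap in your identification of the mapping cone, and it is not merely a sign issue.

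The homotopy $h$ is a $(fg,\Id_\bb)$-coderivation, so the arity-$k$ coefficient $h_k$ has target $\bb(fg(B_0),B_k)$, \emph{not} $\bb(fg(B_0),fg(B_k))$. Consequently the term in the arity-$k$ homotopy relation that involves $g_k$ is not $f_1\circ g_k$: unraveling \eqref{eq:higher2} one finds it is $b_2(h_0\otimes f_1 g_k)$, i.e.\ $f_1 g_k$ post-composed with the zero-arity component $h_0$ of the homotopy. Your two ``schematic'' equations therefore do \emph{not} assemble into a single equation in $\cone(f_1)$, because $f_1\circ g_k$ lands in the wrong $\Hom$-space; the relevant chain map whose cone controls the obstruction is instead
\[
\psi\colon\ \aa(g(B_0),g(B_k))\longrightarrow \bb(fg(B_0),B_k),\qquad x\longmapsto b_2(h_0\otimes f_1 x).
\]
This $\psi$ is still a quasi-isomorphism, but for two separate reasons that must both be invoked: $f_1$ is a quasi-isomorphism \emph{and} $H^\ast(h_0)$ is a natural isomorphism (so post-composition with $h_0$ is a quasi-isomorphism). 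The paper makes both facts explicit at the end of its argument (``Using the fact that $\bar f_1$ is an equivalence and $H^\ast(h_0)$ is a natural isomorphism one easily sees that \eqref{eq:quasi4} has a (unique) solution''). Your write-up only invokes $f_1$ and therefore cannot produce the correct acyclic cone.

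A secondary remark: the paper does not use a cone argument at all, but a two-step correction. It first adjusts $g_{m+1}$ alone so that $g_{\le m+1}$ becomes an honest $A_{m+1}$-functor (using that $f\circ[b,g_{\le m+1}]$ is exact and $f$ is a quasi-isomorphism). Only \emph{then}, with $g_{\le m+1}$ a genuine cofunctor commuting with $b$, is $D=[b,h_{\le m+1}]$ a $(fg_{\le m+1},\Id_\bb)$-coderivation concentrated in arity $m+1$, which is exactly the structure needed to justify the claim $[b_1,D]=0$ that underlies the cocycle condition you assert. If you go for the single-step cone version, you must check the cone cocycle condition for the combined pair $(P_k,Q_k)$ directly, and that verification is delicate precisely because before $g_{\le m+1}$ is fixed the object $[b,h_{\le m+1}]$ does not have the expected coderivation structure. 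The two-step organization is there to avoid this. So your proposal needs both a corrected cone and a careful justification of the cocycle condition; the paper's version sidesteps both.

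Your base step and your analysis of the termination arity are essentially correct.
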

\begin{proof}  If $\aa$ is an $A_n$-category then we define $\bar{\aa}$ as the DG-graph
obtained from $\aa$ by dividing out identities. I.e.\ 
\[
\bar{\aa}(A,B)=
\begin{cases}
\aa(A,B)&\text{if $A\neq B$}\\
\aa(A,A)/k\Id_A&\text{if $A=B$}
\end{cases}
\]
Note that formally $f$ is a cofunctor $B\aa_{\le n}\r B\bb_{\le n}$ such that $[b,f]=0$. Likewise $g$ should be a cofunctor $B\bb_{\le n-1}\r B\aa_{\le n-1}$ satisfying $[b,g]=0$ %\footnote{When we write something like $[b,g_{\le p}]$ we really
%mean $b\circ g_{\le p}-g_{\le p} \circ b_{\le p}$, where $b_{\le p}$ is the restriction of $b$ to $(B\bb)_{\le p}$. We hope this will always be clear from the context.}   
and the homotopy $h:fg\r \Id_\bb$  should be an element of  $\operatorname{coDer}_{fg,\Id_\bb}(B^+\bb_{\le n-1},B\bb_{\le n})$ of degree $-1$ satisfying $[b,h]=0$ such that $H^\ast(h_0)$ is a natural isomorphism
$H^\ast(fg)\r \Id_{H^\ast(\bb)}$.

We will construct $g$ and $h$ step by step. The existence of $g_1$ and $h_0,h_1$ follows simply from the fact that $f$ is a quasi-equivalence: we choose a unit preserving graded graph homomorphism
$g_1:\bb\r \aa$ commuting with differentials
such that there is a natural isomorphism $H^*(f_1)H^*(g_1)\r \Id_{H^\ast(\bb)}$. We choose $h_0:(B\bb)_0\r \bb$ in such away that this natural isomorphism is of the form $H^\ast(h_0)$
and then we choose $h_1$ such that the equation \eqref{eq:h1condition} holds.

Assume that for $1\le m<n-1$ that we have constructed a cofunctor $g_{\le m}:(B\bb)_{\le m}\r (B\bb)_{\le m}$ satisfying  
$[b,g_{\le m}]=0$ and a homotopy $h_{\le m}:fg_{\le m}\r \Id_\bb$.
We will extend the maps $(g_{\le m},h_{\le m})$ to maps $(g_{\le m+1},h_{\le m+1})$ with the same properties.

As a first approximation we extend $g_{\le m}, h_{\le m}$  to respectively a cofunctor ${}{g}_{\le m+1}:B\bb_{\le m+1}\r B\aa_{\le m+1}$ and a 
$(fg_{\le m+1},\Id_{\bb})$-coderivation ${h}_{\le m+1}:B^+\bb_{\le m+1}\r B\bb_{m+1}$  
by setting ${}{g}_{m+1},h_{m+1}:(\Sigma\bar{\bb})^{\otimes m+1}\r \Sigma\aa$ 
equal to zero (see \eqref{eq:bar} for the definition of $\Sigma\bar{\bb}^{\otimes m+1}$). Here $\pi=[b,{}{g}_{\le m+1}]$ is zero on $(\Sigma \bb)^{\otimes i}$, $i\le m$
and hence it may be regarded as a map 
$(\Sigma\bar{\bb})^{\otimes m+1}\r \Sigma\aa$. Moreover $0=[b,\pi]=[b_1,\pi]$. So $\pi$ is closed for the $b_1$-differential
and since $f\pi=[b,fg_{\le m+1}]$ is zero on cohomology and $f$ is a quasi-isomorphism, 
$\pi$ is equal to zero in cohomology as well.
 In other words, there exists
$\delta_{m+1}:\bar{\bb}^{\otimes m+1}\r \aa$ such that $\pi=[b_1,\delta_{m+1}]$. We now replace ${}{g}_{m+1}$ by ${}{g}_{m+1}-\delta_{m+1}$. Then
$[b,{}{g}_{\le m+1}]=0$. In other words ${}{g}_{\le m+1}$ is an $A_{m+1}$-morphism.

Put $D=[b,h_{\le m+1}]$ (see \eqref{eq:higher2}).
Then $D$ is a $(fg_{\le m+1},\Id_{\bb})$-coderivation $(B^+\bb)_{\le m+1}\r B\bb_{\le n}$ which is zero on $(B\bb)_{\le m}$ and hence it can be considered as a map $(\Sigma\bar{\bb})^{\otimes m+1}\r \Sigma\bb$.  
Hence we have
\begin{equation}
\label{eq:chain}
[b_1,D]=[b,D]=0
\end{equation}
We will now try to choose $\sigma_{m+1}:(\Sigma\overline{\bb})^{\otimes m+1}\r \Sigma\aa$, 
$\tau_{m+1}:(\Sigma\overline{\bb})^{\otimes m+1}\r \Sigma\bb$ 
such that for $g'_{m+1}=g_{m+1}+\sigma_{m+1}$, $h'_{m+1}=h_{m+1}+\tau_{m+1}$, $g'_i=g_i$, $h'_i=h_1$ for $i\le m$ we have $[b,g'_{\le m+1}]=0$, $[b',h'_{\le m+1}]=0$ where here $[b',-]$ is the differential \eqref{eq:higher2} computed with $f_1=fg'_{\le m+1}$ and $f_2=\Id_{\bb}$.  
The conditions we have to satisfy are
\begin{align}
\label{eq:quasi1} 0=[b,g'_{\le m+1}]&=[b_1,\sigma_{m+1}]\\
\label{eq:quasi2} 0=[b',h'_{\le m+1}]&=D+b_2\circ (h_0\otimes f_1(g_{m+1}+\sigma_{m+1}))+[b_1,\tau_{m+1}]
\end{align}
We claim these equations have a solution. First note that 
\eqref{eq:quasi2}
may be written as
\begin{equation}
\label{eq:quasi3}
b_2\circ (h_0\otimes f_1\sigma_{m+1})=-D-b_2\circ (h_0\otimes f_1 g_{m+1})\qquad \mod \im [b_1,-]
\end{equation}
Recall that here we have  $[b_1,D]=0$, $b_1\circ h_0=0$ (see \S\ref{sec:homhom}), $[b_1,f_1g_{m+1}]=0$. Hence if we have a solution $\sigma_{m+1}$ to \eqref{eq:quasi1}
\eqref{eq:quasi3} and we replace $\sigma_{m+1}$  by $\sigma_{m+1}+[b_1,s]$
then it is still a solution.

It follows we may combine \eqref{eq:quasi1}\eqref{eq:quasi3} into a single equation 
\[
\bar{b}_2\circ (\bar{h}_0\otimes \bar{f}_1\bar{\sigma}_{m+1})=
\overline{-D-b_2\circ (h_0\otimes f_1 g_{m+1})}
\]
in 
\begin{equation}
\label{eq:quasi4}
H^\ast(\Hom((\Sigma \bar{\bb})^{\otimes m+1},\Sigma \bb)=
\Hom(\Sigma H^\ast(\bar{\bb})^{\otimes m+1},\Sigma H^\ast(\bb))
\end{equation}
where $\overline{?}$ denotes cohomology classes or actions on cohomology.
Using the fact that $\bar{f}_1=H^\ast(f_1)$ is an equivalence and
$H^\ast(h_0)$ is a natural isomorphism one easily sees that 
\eqref{eq:quasi4} has a (unique) solution. 
\end{proof}
We will need the following variant of Lemma \ref{lem:quasi-inverse} which is proved in a similar way.
\begin{lemma} \label{lem:ffcase}
Let $\aa$, $\bb$ be $A_n$ categories for $n\ge 3$ and let $f:\aa\r\bb$ be a fully faithful $A_n$-functor
which is also a quasi-equivalence. Then there exists an $A_{n-1}$-quasi-equivalence $g:\bb\r \aa$ such
that $fg$
and $\Id_\bb$ are homotopic and such that $gf=\Id_\aa$.  Moreover the quasi-inverse $H^*(g_1)$ to $H^\ast(f_1)$ may be chosen freely.
\end{lemma}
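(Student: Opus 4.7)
The plan is to mimic the inductive construction in the proof of Lemma \ref{lem:quasi-inverse}, enforcing at every stage the additional invariant that $g$ restricts to a strict left inverse of $f$ on the subcocategory of $B\bb$ generated by $f_1(\aa)$, and that the homotopy $h\colon fg\r\Id_\bb$ restricts to the trivial homotopy there. Concretely, the inductive invariants at step $m$ will be: $g_1f_1=\Id$, $g_k$ vanishes on $(\Sigma f_1(\bar\aa))^{\otimes k}$ for $2\le k\le m$, $h_{0,fA}=\Id_{fA}$, and $h_k$ vanishes on $(\Sigma f_1(\bar\aa))^{\otimes k}$ for $1\le k\le m$.

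For the base case, since $gf=\Id_\aa$ on objects is part of the conclusion we may assume $f$ is injective on objects and set $g(fA)=A$. Define $g_1|_{\bb(fA,fB)}:=f_1^{-1}$, which is well-defined because $f_1$ is an isomorphism of chain complexes by strict full faithfulness, and extend $g_1$ to the remaining Hom complexes as a unit-preserving chain map realising the prescribed quasi-inverse $H^\ast(g_1)$. Set $h_{0,fA}=\Id_{fA}$ and $h_1|_{\im f_1}=0$; since on the image $fg$ already agrees with $\Id_\bb$ as $A_1$-functors, the relation \eqref{eq:h1condition} reduces on the image to the cancellation of the two unital $b_2$-terms.

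The key observation enabling the inductive step is a splitting of complexes. The reduced tensor space $V:=(\Sigma\bar\bb)^{\otimes m+1}$ decomposes as $V=V'\oplus V''$, where $V'$ is the summand supported on object tuples entirely in $f(\Ob\aa)$ and $V''$ is supported on tuples with at least one object outside. Because $f_1$ is \emph{surjective} on every Hom complex (being an isomorphism), one has $V'(fA_0,\ldots,fA_{m+1})=V(fA_0,\ldots,fA_{m+1})$, canonically identified via $f_1^{\otimes m+1}$ with $(\Sigma\bar\aa)^{\otimes m+1}(A_0,\ldots,A_{m+1})$. Since $b_1$ acts factorwise, both $V'$ and $V''$ are subcomplexes, and hence $\Hom(V,W)=\Hom(V',W)\oplus\Hom(V'',W)$ splits as a direct sum of complexes for any target $W$, a splitting which passes to cohomology.

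Setting $g_{m+1}=h_{m+1}=0$ initially and computing $\pi=[b,g_{\le m+1}]$ and $D=[b,h_{\le m+1}]$ exactly as in Lemma \ref{lem:quasi-inverse}, the inductive invariant forces $g_{\le m+1}|_{V'}$ and $h_{\le m+1}|_{V'}$ to be the trivial extensions (the identity cofunctor under the identification $V'\cong(\Sigma\bar\aa)^{\otimes m+1}$ and the constant homotopy), both of which are $b$-closed; so $\pi|_{V'}=0$ and $D|_{V'}=0$. Thus all right-hand sides in \eqref{eq:quasi1}--\eqref{eq:quasi3} lie in the $\Hom(V'',-)$ summand, and running the exactness arguments of Lemma \ref{lem:quasi-inverse} inside that summand yields corrections $\delta_{m+1}$, $\sigma_{m+1}$, $\tau_{m+1}$ supported on $V''$, hence vanishing on $V'$. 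This maintains the invariant, and the induction goes through. The main subtle point is verifying that $V=V'\oplus V''$ is a genuine splitting of complexes: this relies precisely on strict full faithfulness of $f$ (giving the surjectivity of $f_1$ on each Hom complex), rather than on a weaker quasi-fully-faithful hypothesis which would only produce a splitting up to homotopy.
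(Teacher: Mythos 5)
Your proof is correct and is precisely the elaboration the paper intends by ``proved in a similar way'': you run the induction of Lemma \ref{lem:quasi-inverse} while maintaining the strict-left-inverse invariants, and this works because strict full faithfulness (together with injectivity of $f$ on objects, which the conclusion $gf=\Id_\aa$ tacitly presupposes, and which holds in the paper's sole application $\Phi:\aa\r\Tw_{\le 1}\aa$) gives a chain-level direct-sum decomposition of $(\Sigma\bar\bb)^{\otimes m+1}$ indexed by object tuples, so that all obstructions and corrections can be taken supported on the summand $V''$ off the image of $f$. You have also correctly isolated why the argument genuinely needs full faithfulness rather than quasi-full-faithfulness: the splitting $V=V'\oplus V''$ with $V'\cong(\Sigma\bar\aa)^{\otimes m+1}$ is a splitting of complexes, not merely one up to homotopy.
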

\subsection{The category {\mathversion{bold} $\Free(\aa)$}.}
\label{sec:free}
\begin{definition}
\label{def:free}
Given an $A_n$-category $\aa$, $\Free(\aa)$ is obtained from $\aa$ by formally adding finite (possibly empty) direct
  sums and shifts of objects in $\aa$, i.e. an object of $\Free(\aa)$
  is given by 
\begin{equation}
\label{eq:free}
 A=\oplus_{i\in I} \ \Sigma^{a_i} A_i
 \end{equation}
where $A_i\in\Ob(\aa)$, $a_i\in \ZZ$, $|I|<\infty$. We allow $|I|=\emptyset$. 
Morphisms in $\aa$ are defined as
\[ 
\Free(\aa)(\oplus_i  \Sigma^{a_i} A_i,\oplus_j \Sigma^{b_i}B_j)=
\oplus_{i,j} \Sigma^{b_i-a_i}\aa(A_i,B_j).
\]
An element $f\in \aa(A,B)$ considered as an element of $\Free(\aa)(\Sigma^a A,\Sigma^b B)$ 
will be written as $\sigma^{b-a}f$ such that $|\sigma^{b-a}f|=|f|-(b-a)$.

 If $\aa$ is an $A_n$-category we can then make $\Free(\aa)$ into an
$A_n$-category. We need to define the higher compositions between morphisms between objects of
the form $\Sigma^a A$ (the case of more complicated objects is done by linear extension).
So if we have we have
maps in $\aa$:
\[
A_0\xrightarrow{f_1}A_1\xrightarrow{f_2}\cdots\xrightarrow{f_n} A_n
\]
and corresponding maps in $\Free(\aa)$
\[
\Sigma^{a_0} A_0\xrightarrow{\sigma^{a_1-a_0}f_1}\Sigma^{a_1}
A_1\xrightarrow{\sigma^{a_2-a_1}f_2}\cdots\xrightarrow{\sigma^{a_n-a_{n-1}}f_n} \Sigma^{b_n}A_n
\]
then
\[
b_n(s\sigma^{a_n-a_{n-1}}f_n,\ldots,s\sigma^{a_2-a_1}f_2,s\sigma^{a_1-a_0}f_1)
=\pm \sigma^{a_n-a_0}b_n(sf_n,\ldots,sf_2,sf_1)
\]
where the sign  is determined by  the usual Koszul sign convention (used with the rule $s\sigma=-\sigma s$).
\end{definition}
The $A_n$-category $\Free(\aa)$ is equipped with an strict $A_n$-endo functor $\Sigma$  such
that on objects we have
\begin{equation}
\label{eq:shiftfree}
\Sigma\left( \oplus_i  \Sigma^{a_i} A_i\right)=\oplus_i  \Sigma^{a_i+1} A_i
\end{equation}
and on morphisms $\Sigma$ is given by $\Sigma(\sigma^af)=(-1)^a\sigma^a f$ for $f$ a morphism in $\aa$. We will call $\Sigma$ the shift functor
on $\Free(\aa)$. Likewise $\Free(\aa)$ is equipped with an (associative) operation $\oplus$ with
an obvious definition. We will call it the ``direct sum''. Finally if $I=\emptyset$ in \eqref{eq:free}  the resulting object is denoted by $0$ and is called the ``zero object''.
\section{Truncated twisted complexes}
\label{sec:truncated}
From now on let $\aa$ be an $A_n$-category. 
\subsection{Higher cone categories}
Let $\aa^{\oplus m}$ be the graded graph whose objects are formal direct sums of precisely $m$ objects in $\aa$.
 \begin{equation}
\label{eq:objectA}
A=A_0\oplus A_1 \oplus \ldots \oplus A_{m-1},
\end{equation}
Morphisms are given by
\begin{equation}
\label{eq:linearHoms}
 \aa^{\oplus m}(A,B)= \oplus_{i,j=0}^{m-1} \aa(A_i,B_j).
\end{equation}
We extend the higher operations on $\aa$ linearly to $\aa^{\oplus m}$ so that $\aa^{\oplus m}$ becomes
an $A_n$-category.
\begin{remark}\label{rem:convention}
 Below we usually think of objects in $\aa^{\oplus m}$ as column vectors and similarly of morphisms in $\aa^{\oplus m}$ as
matrices acting on those column vectors.
\end{remark}
\begin{definition}[Higher cone categories]
Assume $m\le n+1$. The graded graph $\aa^{*m}$ is defined as follows.
\begin{itemize}
\item Objects are given by couples $(A,\delta_A)$ such that $A\in \Ob(\aa^{\oplus m})$
and  $\delta_A \in \aa^{\oplus m}(A,A)_1$
  is a 
``Maurer-Cartan element'' with strictly lower triangular matrix, i.e. it  satisfies
\begin{equation}
\label{eq:mcequation}
\sum_{i\le m-1} b_i(s\delta_A,\ldots,s\delta_A)=0
\end{equation}
\item Morphisms are given by
\begin{equation}
\label{eq:coneHoms}
 \aa^{*m}((A,\delta_A),(B,\delta_B))= \aa^{\oplus m}(A,B)
\end{equation}
\end{itemize}
\end{definition}
\begin{lemma}\label{higher operations} Assume $m\le n+1$.
The graded graph $\aa^{*m}$ has the structure of an $A_{ \left \lfloor{ \frac{n-m+1}{m}}\right \rfloor}$-category with higher multiplications given by
\begin{equation}\label{MC multiplication}
b_{\aa^{*m},i}(sg_i,\ldots,sg_1)=\sum_{\substack{l_0,\ldots,l_i \\ h=i+\Sigma l_j\le n}} b_{\aa^{\oplus m},h} (\underbrace{s\delta_i,\ldots,s\delta_i}_{l_i},sg_i,
\underbrace{s\delta_{i-1},\ldots,s\delta_{i-1}}_{l_{i-1}},\ldots,\underbrace{s\delta_1,\ldots,s\delta_1}_{l_1}
,sg_1,\underbrace{s\delta_0,\ldots,s\delta_0}_{l_0}) 
\end{equation}
for any set 
\[
(B_0,\delta_{0})\xrightarrow{g_1} (B_1,\delta_{1})\xrightarrow{g_2}\cdots\xrightarrow{g_i} (B_i,\delta_i).
\]
of $i\le \lfloor (n-m+1)/m\rfloor$ composable arrows  in $\aa^{*m}$.
\end{lemma}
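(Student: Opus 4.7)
The plan is to verify, in order: (i) that \eqref{MC multiplication} defines morphisms in $\aa^{\oplus m}$ with no term lost to the truncation $h\le n$, (ii) that the unit axioms hold, and (iii) that the $A_n$-type quadratic relations $(\ast)_k$ of \cite[Definition 1.2.1.1]{Lefevre} hold for $k\le \lfloor(n-m+1)/m\rfloor$. The conceptual content of (iii) is the standard ``twisting by a Maurer--Cartan element'' from the $A_\infty$-world; the work lies in checking the arity truncation survives.

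For (i), by Remark \ref{rem:convention} each $\delta_j$ is a strictly lower triangular $m\times m$ matrix of $\aa$-morphisms. A path-counting argument in the index set $\{0,\ldots,m-1\}$ shows that, in $\aa^{\oplus m}$, any chunk of $\ge m$ consecutive $s\delta$'s inside $b_{\aa^{\oplus m},h}$ forces the component-wise contribution to zero. Applied to a summand of \eqref{MC multiplication}, this means that non-vanishing terms satisfy $l_j\le m-1$ for every $j$, so $h=i+\sum l_j\le im+m-1$; by the hypothesis $i\le \lfloor(n-m+1)/m\rfloor$ this is at most $n$, so the restriction $h\le n$ discards only zero terms and $b_{\aa^{\oplus m},h}$ is defined on every surviving summand.

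For (iii), I would introduce, for each $(B,\delta_B)\in\Ob(\aa^{*m})$, the twisting sum
\[
\Pi_B=1+s\delta_B+(s\delta_B)^{\otimes 2}+\cdots+(s\delta_B)^{\otimes (m-1)},
\]
which is finite by the same nilpotence argument. The formula \eqref{MC multiplication} then rewrites compactly as
\[
b_{\aa^{*m},i}(sg_i,\ldots,sg_1)=b_{\aa^{\oplus m}}\bigl(\Pi_{B_i}\otimes sg_i\otimes\Pi_{B_{i-1}}\otimes\cdots\otimes sg_1\otimes\Pi_{B_0}\bigr).
\]
A composition $b_{\aa^{*m},i_1}(\cdots\otimes b_{\aa^{*m},i_2}(\cdots)\otimes\cdots)$ appearing in the expansion of $(\ast)_k$ for $\aa^{*m}$, with $i_1+i_2=k+1$ and $k$ total real arguments, expands into nested $\aa^{\oplus m}$-compositions $b_{\aa^{\oplus m},h_1}(\cdots\otimes b_{\aa^{\oplus m},h_2}(\cdots)\otimes\cdots)$. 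The $\Pi$-blocks contributed by the inner and outer operations that lie adjacent to the inner-input slot concatenate into a single ``flattened'' $\delta$-chunk. Applying the path-counting argument to the flattened sequence, non-vanishing terms must have every flattened chunk of length $\le m-1$, so the total flattened arity satisfies $h_1+h_2-1\le k+(k+1)(m-1)=km+m-1\le n$. This is precisely the range in which the $A_n$-axiom $(\ast)_{h_1+h_2-1}$ of $\aa^{\oplus m}$ applies. Separating the resulting sum by whether the inner $b_{h_2}$ receives at least one real $g_j$ or only $\delta$'s, the former terms cancel pairwise by the $\aa^{\oplus m}$-relations (the standard ``$b^2=0$'' in the bar cocategory), while the latter cancel via the Maurer--Cartan equation \eqref{eq:mcequation}. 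The unit axioms (ii) follow from $b_{\aa^{\oplus m},h}(\ldots,s\Id,\ldots)=0$ for $h\ne 2$, which kills every term in which a unit meets a $\delta$.

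The main obstacle is the Koszul-sign bookkeeping in (iii) and checking that the flattened-chunk reorganization sets up a precise bijection between the expanded terms of $(\ast)_k$ for $\aa^{*m}$ and the combined contributions from the $A_n$-axioms of $\aa^{\oplus m}$ together with the Maurer--Cartan equation. The arity bound $km+m-1\le n$, which is equivalent to the hypothesis $k\le \lfloor(n-m+1)/m\rfloor$, is exactly what guarantees the whole computation remains within the truncated $A_n$-structure on $\aa^{\oplus m}$.
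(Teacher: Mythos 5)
Your proposal is correct and follows essentially the same route as the paper's proof: strict lower triangularity of the Maurer--Cartan elements bounds each $\delta$-chunk by $m-1$, hence the flattened arity by $(m-1)(i+1)+i=mi+m-1\le n$, and the Stasheff relation for $\aa^{*m}$ then reduces to the $A_n$-relation of $\aa^{\oplus m}$ on lists of $\le n$ arguments together with the Maurer--Cartan equation \eqref{eq:mcequation}. You make the decomposition of the Stasheff identity into bar-differential terms versus all-$\delta$ inner-chunk (MC) terms more explicit than the paper's one-line summary, but the underlying argument is the same.
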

\begin{proof}
We need to check $b_{\aa^{\ast m}}\circ b_{\aa^{\ast m}}=0$ on $i$ composable arrows for $i\le \lfloor (n-m+1)/m\rfloor$ as well as the correct behavior of identities. We will concentrate on the first condition
as it is the most interesting one.
As we will use similar facts several times below we present the argument in some detail. 

If we expand  $(b_{\aa^{\ast m}}\circ b_{\aa^{\ast m}})_i$ then it becomes
the sum of multilinear expressions evaluated on lists of arguments of the form
\begin{equation}
\label{eq:arguments}
\underbrace{s\delta_i,\ldots,s\delta_i}_{l_i},sg_i,
\underbrace{s\delta_{i-1},\ldots,s\delta_{i-1}}_{l_{i-1}},\ldots,\underbrace{s\delta_1,\ldots,s\delta_1}_{l_1}
,sg_1,\underbrace{s\delta_0,\ldots,s\delta_0}_{l_0}
\end{equation}
The crucial point is that those multilinear expression are obtained by linear expansion of the 
corresponding expressions evaluated on composable arrows in $\aa$.
Now  for each element $(A,\delta_A)\in \aa^{*m}$, the Maurer-Cartan 
  element $\delta_A$ is a strictly lower triangular $m\times m$-matrix and 
  hence such extended expressions are zero on \eqref{eq:arguments} whenever one of the $l_j$ is $\ge m$.

By the assumption
\[ 
i\le \left \lfloor{ \frac{n-m+1}{m}}\right \rfloor
\] 
we obtain that the length of the relevant lists of arguments in \eqref{eq:arguments} is 
\begin{align*}
&\le (m-1)(i+1)+i\\
&=mi+m-1\\
&\le m\left\lfloor \frac{n-m+1}{m}\right \rfloor+m-1\\
&\le n-m+1+m-1\\
&= n
\end{align*}
Now the condition $b_{\aa^{\ast m}}\circ b_{\aa^{\ast m}}=$ combined
with \eqref{eq:mcequation} becomes $b_{\aa^{\oplus m}}\circ b_{\aa^{\oplus m}}=0$
when evaluated on lists of $\le n$ arguments.  This 
holds since $\aa^{\oplus m}$ is an $A_n$-category.
\end{proof}
Below we call $\aa^{\ast m}$ a higher cone category. This is motivated by Definition \ref{def:cone} below. 
\begin{lemma}[Functoriality of $*$]\label{lem:functoriality of *} Given $A_n$-categories $\aa$ and $\bb$ and $t\leq m+1 \leq n+1$, we obtain a strict $A_p$-functor
\[
*^t: A_m(\aa, \bb) \to A_{ \left \lfloor{ \frac{m-t+1}{t}}\right \rfloor}(\aa^{*t}, \bb^{*t})
\]
for $p=  \lfloor (n-t+1)/t \rfloor- \lfloor (m-t+1)/t \rfloor$. Moreover $\ast^t$ is strictly compatible with the compositions
\[
A_m(\bb,\cc)\times A_m(\aa,\bb)\r A_m(\aa,\cc).
\]
\end{lemma}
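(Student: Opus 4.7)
The plan is to extend the Maurer--Cartan contraction formula \eqref{MC multiplication} from higher compositions to $A_m$-functors and morphisms between them. Given an $A_m$-functor $f : \aa \to \bb$, define the candidate $A_{m'}$-functor $f^{*t} : \aa^{*t} \to \bb^{*t}$ (where $m' = \lfloor (m-t+1)/t \rfloor$) as follows. On objects, send $(A, \delta_A)$ to $(f(A), \delta_{f(A)})$, where $f(A)$ is obtained from $A$ by applying $f$ blockwise and
\[
s\delta_{f(A)} = \sum_{j\ge 1} f_j(\underbrace{s\delta_A, \ldots, s\delta_A}_{j})
\]
with the sum taken over $1 \le j \le \min(m, t-1)$ (terms with $j \ge t$ vanish by the strict lower triangularity of $\delta_A$, and $j > m$ are not defined). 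On higher compositions, I mirror \eqref{MC multiplication}: for $k$ composable arrows $g_1,\dots,g_k$ in $\aa^{*t}$, set
\[
(f^{*t})_k(sg_k,\ldots,sg_1) = \sum_{\substack{l_0,\ldots,l_k \\ h = k + \sum l_j \le m}} f_h(\underbrace{s\delta,\ldots}_{l_k}, sg_k, \underbrace{s\delta,\ldots}_{l_{k-1}}, \ldots, sg_1, \underbrace{s\delta,\ldots}_{l_0}).
\]
The arity bound $k \le m'$ drops out by the same accounting as in Lemma \ref{higher operations}: each $l_j \le t-1$, hence $h \le k + (k+1)(t-1) = kt + t - 1$, and demanding $h \le m$ gives $k \le (m-t+1)/t$.

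Similarly, for $h \in \Sigma\Hom(f_1, f_2)$ in $A_m(\aa,\bb)$ with Taylor coefficients $(h_k)_{0\le k\le m}$, define $(*^t)_1(h) =: h^{*t} \in \Sigma\Hom(f_1^{*t}, f_2^{*t})$ by the entirely analogous sandwiching formula, including a $k=0$ term obtained by contracting $h_0$ against the $\delta_A$'s. Set $(*^t)_i = 0$ for $i \ge 2$; this is the content of strictness and leaves nothing to prove for the higher components of $*^t$ itself.

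The verifications that $f^{*t}$ is an $A_{m'}$-functor, that $h^{*t}$ is a coderivation of the required type, and that $*^t_1$ commutes with $b_k$ for $1 \le k \le p$, all proceed as in the proof of Lemma \ref{higher operations}. Namely, every identity to check is a sum of multilinear expressions evaluated on bar strings of the form $(\ldots, s\delta, \ldots, sg_i, \ldots, s\delta, \ldots)$ in $\aa^{\oplus t}$ or $\bb^{\oplus t}$. Collecting terms according to the non-$\delta$ content, each identity reduces to a combination of: (i) the Maurer--Cartan equation \eqref{eq:mcequation} for the relevant $\delta$'s; (ii) the axioms $(*)_i$, $(**)_i$, or the coderivation/morphism conditions for $\aa$, $\bb$, $f$, $h$ of arity at most $m$; and (iii) the nilpotency $\delta^t = 0$ cutting off the sums at $l_j \le t-1$. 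The degree bound $p = n' - m'$ ensures that the total length $kt + t - 1$ of every bar string appearing stays $\le n$, so all operations in $\aa^{\oplus t}$, $\bb^{\oplus t}$ are defined.

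Compatibility with composition $A_m(\bb,\cc) \times A_m(\aa,\bb) \to A_m(\aa,\cc)$ follows directly: expanding $(f \circ g)^{*t}$ via the bar formula for composition of cofunctors and then inserting $\delta$'s yields precisely the expansion of $f^{*t} \circ g^{*t}$, where the Maurer--Cartan element on the middle object is, by construction, $\delta_{g(A)}$, matching on both sides. The main obstacle is purely bookkeeping: organising the indices $(l_0, \ldots, l_k)$ and tracking Koszul signs in the sandwiching formulas across several simultaneous identities. A cleaner (but essentially equivalent) route is to note that the codifferential on $B\aa^{*t}_{\le n'}$ is obtained from that on $B\aa^{\oplus t}_{\le n}$ by the standard Maurer--Cartan twist associated to $\delta$, after which all the required identities reduce to the already known cofunctor/coderivation relations between $B\aa^{\oplus t}$ and $B\bb^{\oplus t}$ restricted to admissible arities.
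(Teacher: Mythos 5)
Your proposal is correct and follows essentially the same route as the paper's proof: the same object-level formula $s\delta_{f(A)}=\sum_{j\le t-1} f_j(s\delta_A,\ldots,s\delta_A)$, the same $\delta$-sandwiching formulas for $(f^{*t})_k$ and $(h^{*t})_k$, the observation that strictness reduces everything to checking that $(\ast^t)_1$ commutes with the higher operations, the identical arity-counting argument borrowed from Lemma \ref{higher operations}, and the same reduction of compatibility with composition to the bar/cofunctor picture. The only presentational difference is that the paper identifies $p$ by observing that $A_{\lfloor(m-t+1)/t\rfloor}(\aa^{\ast t},\bb^{\ast t})$ is an $A_p$-category via Lemmas \ref{higher operations} and \ref{lem:An-m}, whereas you re-derive the same bound by tracking bar-string lengths directly; these are two ways of saying the same thing.
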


\begin{proof}
Since we are defining a strict functor we only need to define $(\ast^t)_1$.
We will write $(-)^{\ast^t}$ for $(\ast^t)_1(-)$.

First of all we define the functor on ``objects''. For an element  $f\in \Ob(A_m(\aa,
\bb))$ and $(A,\delta_A)\in \Ob(\aa^{\ast t})$ define
\[
f^{*t}(A,\delta_A)=(f(A),\sum_{i\leq t-1}f(s\delta_A,\ldots,s\delta_A))
\]
where $f$ is understood to be extended linearly to direct sums.
For a sequence of
composable arrows
\begin{equation}
\label{eq:composable}
(A_0,\delta_0)\xrightarrow{a_1} (A_1,\delta_1) \xrightarrow{a_2} \cdots \xrightarrow{a_d} (A_d,\delta_d)
\end{equation}
put
\[
(f^{*t})_d(sa_d,\ldots,sa_1)=\sum f_{d+i_0+\cdots+i_d}(s\delta_d^{\otimes i_d},sa_{d},s\delta_{d}^{\otimes i_{d-1}},\ldots,sa_1,s\delta_0^{\otimes i_0}).
\]
To show that $*^t$ sends an $A_m$-functor to an $A_{\lfloor (m-t+1)/t \rfloor}$-functor (i.e an element of 
$\Ob(A_{ \lfloor (m-t+1)/t \rfloor}(\aa^{*t}, \bb^{*t})$), one proceeds in the same way as in the proof of Lemma \ref{higher operations}.  

Now we define $(*^t)_1$ on $\Hom$-spaces in $A_m(\aa,\bb)$. Given
$f,g\in A_m(\aa,\bb)$ and $h\in A_m(\aa,\bb)(f,g)$ we define
$h^{*t}\in \Hom(f^{*t},g^{*t})$ as follows: for a sequence of
composable arrows
as in \eqref{eq:composable}
we have
\[
(h^{*t})_d(sa_d,\ldots,sa_1)=\sum h_{d+i_0+\cdots+i_d}(s\delta_d^{\otimes i_d},sa_{d},s\delta_{d-1}^{\otimes i_{d-1}},\ldots,sa_1,s\delta_0^{\otimes i_0}).
\]
One verifies that $(\ast^t)_1$ commutes with the higher operations on $A_m(\aa, \bb)$ and  $A_{\lfloor (m-t+1)/t \rfloor}(\aa^{*t}, \bb^{*t})$ (see Lemma \ref{lem:An-m}) and hence
defines a strict functor. It is an $A_p$-functor since $A_{ \left \lfloor{ \frac{m-t+1}{t}}\right \rfloor}(\aa^{*t}, \bb^{*t})$ is an $A_p$-category by Lemma \ref{higher operations} and Lemma \ref{lem:An-m}.
The strict compatibility with compositions is also a standard verification.
\end{proof}
\subsection{Truncated twisted complexes}
In the
$A_n$-category setting, untruncated twisted complexes are not well 
behaved as they form only a graded graph. Indeed even the
definition of the differential on morphisms between twisted complexes involves higher operations
of unbounded arity.
Therefore in this section we introduce  \emph{truncated twisted complexes} over an $A_n$-category. 
In this case the resulting object is still an $A_p$-category for some $p$, although
$p$ is much smaller than $n$.
\begin{definition}[Truncated twisted complexes] 
\label{def:truncated} Assume $m\le n$.
We define the truncated twisted complexes over $\aa$ as 
\[ 
\Tw_{\leq m}\aa = \Free(\aa)^{*m+1}  
\]
The map
\[
(A,\delta_A)\mapsto (A\oplus 0,(\delta_A,0))
\]
defines a fully faithful functor $\Tw_{\le m}\aa\r \Tw_{\le m+1}\aa$ which we will treat as an inclusion.
With this convention we write $\Tw\aa$ for $\bigcup_m \Tw_{\le m} \aa$ in case $\aa$ is an $A_\infty$-category. In a similar
vein we define the fully faithful functor $\Phi:\aa\r\Tw_{\le m}\aa:A\mapsto (A\oplus 0\oplus\cdots,0)$ which 
again we will treat as an inclusion.
\end{definition}
From Lemma \ref{higher operations} we obtain
\begin{lemma}\label{An-ness} Assume $m\le n$.
The category of truncated twisted complexes $\Tw_{\leq m}\aa $ has a structure of an $A_{ \left \lfloor{ \frac{n-m}{m+1}}\right \rfloor}$-category.
\end{lemma}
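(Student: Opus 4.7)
The plan is essentially to observe that this is a direct specialization of the previously established Lemma \ref{higher operations}, applied to the $A_n$-category $\Free(\aa)$, with the ``higher cone'' length parameter equal to $m+1$.

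More precisely, I would proceed in three short steps. First, recall from \S\ref{sec:free} (Definition \ref{def:free}) that $\Free(\aa)$ carries the structure of an $A_n$-category, obtained by extending the higher multiplications on $\aa$ linearly over direct sums with the appropriate Koszul signs coming from the shifts $\sigma^{a}$. In particular the input for Lemma \ref{higher operations} is legitimate: $\Free(\aa)$ is an $A_n$-category, so all the hypotheses of that lemma with ``$\aa$'' replaced by ``$\Free(\aa)$'' are available.

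Next, I would invoke Lemma \ref{higher operations} with the parameter ``$m$'' of that lemma replaced by $m+1$. The standing hypothesis of that lemma is that this new parameter is at most $n+1$, i.e.\ $m+1 \le n+1$, which is exactly our assumption $m \le n$. Applying the lemma gives that
\[
\Tw_{\le m}\aa \;=\; \Free(\aa)^{\ast (m+1)}
\]
carries the structure of an $A_p$-category with
\[
p \;=\; \left\lfloor \frac{n - (m+1) + 1}{m+1} \right\rfloor \;=\; \left\lfloor \frac{n-m}{m+1} \right\rfloor,
\]
which is the bound claimed in the statement.

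Finally, I would note that there is nothing further to check: the higher multiplications of $\Tw_{\le m}\aa$ are defined by the formula \eqref{MC multiplication} applied inside $\Free(\aa)$, and compatibility with units as well as the identity $b \circ b = 0$ on lists of length at most $p$ composable arrows follow verbatim from Lemma \ref{higher operations}. Since the proof is a one-line deduction from an already-established lemma, there is no real obstacle; the only place where one has to be a little careful is the arithmetic of the indices, in particular the shift from ``$m$'' in Lemma \ref{higher operations} to ``$m+1$'' here, which is dictated by the definition $\Tw_{\le m}\aa = \Free(\aa)^{\ast (m+1)}$.
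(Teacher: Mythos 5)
Your proof is correct and is essentially identical to the paper's: the paper precedes Lemma \ref{An-ness} with the phrase ``From Lemma \ref{higher operations} we obtain,'' i.e.\ it too reads the statement off by applying Lemma \ref{higher operations} to $\Free(\aa)$ with higher-cone parameter $m+1$, using $\Tw_{\le m}\aa = \Free(\aa)^{\ast(m+1)}$. Your index bookkeeping ($m+1\le n+1 \Leftrightarrow m\le n$, and $\lfloor (n-(m+1)+1)/(m+1)\rfloor = \lfloor (n-m)/(m+1)\rfloor$) is exactly right.
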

\begin{lemma}[Functoriality of $\Tw$]\label{functoriality of Tw}
Let $F:\aa \to \bb$ be an $A_m$-functor between two $A_n$-categories with $a\le m\leq n$. Then we obtain a corresponding $A_{ \left \lfloor{ \frac{m-a}{a+1}}\right \rfloor}$ functor
\[ 
\Tw_{\leq a} F:  \Tw_{\leq a}\aa \to  \Tw_{\leq a}\bb.
\]
Moreover $\Tw_{\leq a}(-)$ is strictly compatible with compositions of $A_n$-functors.
\end{lemma}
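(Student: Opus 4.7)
The plan is to decompose $\Tw_{\le a} F$ into a composition of two constructions, following the definition $\Tw_{\le a}\aa = \Free(\aa)^{\ast a+1}$. We already have a functorial $\ast^{a+1}$-construction sending $A_m$-functors to $A_{\lfloor (m-a)/(a+1)\rfloor}$-functors by Lemma \ref{lem:functoriality of *} (taking $t=a+1$, so the hypothesis $t\le m+1$ becomes $a\le m$, as assumed). So it suffices to functorialize $\Free(-)$ on $A_m$-functors between $A_n$-categories and then compose with $\ast^{a+1}$.

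First, I would define $\Free(F):\Free(\aa)\to\Free(\bb)$. On objects set $\Free(F)(\oplus_i\Sigma^{a_i}A_i)=\oplus_i\Sigma^{a_i}F(A_i)$. On morphisms, for composable arrows $\sigma^{a_j-a_{j-1}}f_j$ in $\Free(\aa)$ coming from arrows $f_j$ in $\aa$, set
\[
\Free(F)_k(s\sigma^{a_k-a_{k-1}}f_k,\ldots,s\sigma^{a_1-a_0}f_1)=\pm\,\sigma^{a_k-a_0}\,F_k(sf_k,\ldots,sf_1),
\]
with the sign determined by the Koszul rule $s\sigma=-\sigma s$, and extend linearly to direct-sum objects. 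Checking the $A_m$-functor identities $(\ast\ast)_i$ for $\Free(F)$ reduces, by linear and sign-compatible extension, to the corresponding identities already known to hold for $F$; this is the same pattern by which the higher compositions on $\Free(\aa)$ itself were verified in Definition~\ref{def:free}. Strict compatibility $\Free(G\circ F)=\Free(G)\circ\Free(F)$ is immediate from the definitions (both sides are the linear, sign-compatible extension of the composition).

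Now define $\Tw_{\le a}F:=(\Free(F))^{\ast a+1}$. By Lemma \ref{lem:functoriality of *} applied to the $A_m$-functor $\Free(F)$ between the $A_n$-categories $\Free(\aa)$ and $\Free(\bb)$, this is an $A_{\lfloor (m-a)/(a+1)\rfloor}$-functor $\Free(\aa)^{\ast a+1}\to\Free(\bb)^{\ast a+1}$, which is exactly an $A_{\lfloor(m-a)/(a+1)\rfloor}$-functor $\Tw_{\le a}\aa\to\Tw_{\le a}\bb$. Strict compatibility with compositions follows by chaining both compatibility statements: given $\aa\xrightarrow{F}\bb\xrightarrow{G}\cc$, we have $\Free(G\circ F)=\Free(G)\circ\Free(F)$, and then by the second clause of Lemma \ref{lem:functoriality of *}, $(\Free(G)\circ\Free(F))^{\ast a+1}=\Free(G)^{\ast a+1}\circ\Free(F)^{\ast a+1}$.

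The only real thing to verify, and the place where a careful bookkeeping is needed, is that $\Free(F)$ is genuinely an $A_m$-functor with the stated signs; this is the analogue for functors of the sign check already embedded in Definition \ref{def:free} for the higher multiplications on $\Free(\aa)$, and no genuinely new difficulty arises. Everything beyond that is formal consequence of Lemma \ref{lem:functoriality of *}.
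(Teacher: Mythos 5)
Your proposal is correct and takes the same route as the paper: the paper's proof is the one-line statement that the lemma follows from Lemma \ref{lem:functoriality of *}, implicitly relying on $\Tw_{\le a}=(\Free(-))^{\ast\, a+1}$ and the (routine) functoriality of $\Free(-)$. You have simply spelled out the construction of $\Free(F)$ and the index bookkeeping ($t=a+1$), which is exactly what the paper leaves implicit.
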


\begin{proof}
This follows immediately from Lemma \ref{lem:functoriality of *}.
\end{proof}
\subsection{Distinguished triangles}
\begin{definition} \label{def:cone} Assume $f:A\r B$ is closed morphism in $\aa$ of degree zero. Then
$C(f)$ is the object  $(\Sigma A\oplus  B,\delta_{C(f)})\in \Tw_{\leq 1}\aa$ such that 
\[
\delta_{C(f)}=
\begin{pmatrix}
0&0\\
\sigma^{-1}f&0
\end{pmatrix}
\]
(recall that we write objects as column vectors and morphisms as matrices - see Remark \ref{rem:convention}).% \marginpar{Perhaps we should specify a lower bound for $n$ here. At least we need $\geq 1$ to talk about closed morphisms. }
\end{definition}
\begin{definition}
Let $f:A\r B$ be a morphism in $Z^0\aa$. The associated \emph{standard distinguished triangle} $\delta_f$ in $\Tw_{\leq 1}\aa$ is given by
\begin{equation} 
\label{eq:standard}
A\xrightarrow{f} B \xrightarrow{i} (C(f),\delta_{C(f)}) \xrightarrow[(1)]{p} A 
\end{equation}
where
\[
i=\begin{pmatrix}
0\\
\Id_B
\end{pmatrix}\qquad
p=\begin{pmatrix}
\sigma^{-1}\Id_{A}&0
\end{pmatrix}
\]
The image of $\delta_f$ in $H^0(\Tw_{\le 1}\aa)$ is written as $\bar{\delta}_f$. It is also
called a standard distinguished triangle.
\end{definition}
\begin{definition}\label{triangulatedstructure}
  Let $\aa$ be an $A_n$-category with $n\ge  7$. A triangle in $H^\ast(\aa)$
  is said to be \emph{distinguished} if its image under $H^\ast(\Phi)$ is
  isomorphic to a standard distinguished triangle in
  $H^\ast(\Tw_{\leq 1}\aa)$. 
\end{definition}
From this definition we immediately obtain:
\begin{theorem} \label{bla}
Let $\rho:\aa\r\bb$ be an $A_m$-functor between $A_n$-categories for $m\ge 5$, $n\ge 7$. Then $H^\ast(\rho)$ preserves distinguished triangles.
\end{theorem}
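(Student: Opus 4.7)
The plan is to reduce the statement to a direct strict calculation on $\Tw_{\le 1}$ and exploit the naturality of the inclusion $\Phi$. First I would apply Lemma \ref{functoriality of Tw} with $a=1$ (replacing the $A_m$-functor $\rho$ by its underlying $A_{\min(m,n)}$-functor if necessary) to obtain $\Tw_{\le 1}\rho:\Tw_{\le 1}\aa\to\Tw_{\le 1}\bb$, an $A_{\lfloor (m-1)/2\rfloor}$-functor between $A_{\lfloor (n-1)/2\rfloor}$-categories. The hypotheses $m\ge 5$ and $n\ge 7$ ensure we have at least an $A_2$-functor between $A_3$-categories, so by point (3) after Definition \ref{A_n functors}, $H^\ast(\Tw_{\le 1}\rho)$ is a genuine graded functor. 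A direct inspection of the formulas in Lemma \ref{lem:functoriality of *} for $\Phi$ and $\Tw_{\le 1}\rho$---both preserve objects with zero twist and agree with $\rho$ entry-wise on morphisms between such objects---shows that $\Tw_{\le 1}\rho\circ\Phi_\aa=\Phi_\bb\circ\rho$ as strict $A_m$-functors. Passing to cohomology yields the commutative square
\[
\xymatrix{
H^\ast(\aa)\ar[r]^{H^\ast(\rho)}\ar[d]_{H^\ast(\Phi_\aa)} & H^\ast(\bb)\ar[d]^{H^\ast(\Phi_\bb)}\\
H^\ast(\Tw_{\le 1}\aa)\ar[r]_{H^\ast(\Tw_{\le 1}\rho)} & H^\ast(\Tw_{\le 1}\bb)
}
\]
Via this square, the theorem reduces to showing that $H^\ast(\Tw_{\le 1}\rho)$ sends each standard distinguished triangle $\bar{\delta}_f$ to a standard distinguished triangle in $H^\ast(\Tw_{\le 1}\bb)$.

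For the main step, I would prove the sharper assertion that $\Tw_{\le 1}\rho$ sends $\delta_f$ to $\delta_{\rho_1(f)}$ on the nose, for every closed $f\in\aa(A,B)_0$. The Maurer--Cartan element $\delta_{C(f)}$ is a strictly lower-triangular $2\times 2$ matrix with a single nonzero entry $\sigma^{-1}f$ at position $(2,1)$, so any matrix-entry product of two copies of $\delta_{C(f)}$ vanishes. Applying Lemma \ref{lem:functoriality of *} on objects gives $\Tw_{\le 1}\rho(C(f))=(\Sigma\rho A\oplus\rho B,\,\Free(\rho)_1(s\delta_{C(f)}))$, and since $\Free(\rho)_1$ acts on morphisms entry-wise by $\rho_1$, the resulting Maurer--Cartan element has $\sigma^{-1}\rho_1(f)$ in position $(2,1)$; this object is exactly $C(\rho_1(f))$.

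Finally I would run an analogous entry-wise analysis on the structure maps $i=(0,\Id_B)^T$ and $p=(\sigma^{-1}\Id_A,0)$. In the formula $(\Tw_{\le 1}\rho)_1(i)=\sum_{j\ge 0}\rho_{1+j}(s\delta_{C(f)}^{\otimes j},si)$, the only nonzero entry of $i$ lies in row~$2$, and continuing the composition with a nonzero entry of $\delta_{C(f)}$ would require $\delta_{C(f)}$ to have a nonzero entry in column~$2$, which it does not; hence only the $j=0$ term survives and $(\Tw_{\le 1}\rho)_1(i)=\rho_1(si)=i_{\rho_1(f)}$. A symmetric argument handles $p$, and $f$ itself is mapped to $\rho_1(f)$ since both of its endpoints carry zero twist. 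Combining these identifications gives $\Tw_{\le 1}\rho(\delta_f)=\delta_{\rho_1(f)}$, and then the commutative square above delivers the theorem: if $T$ is distinguished in $H^\ast(\aa)$ with $H^\ast(\Phi_\aa)(T)\cong\bar{\delta}_f$, then $H^\ast(\Phi_\bb)(H^\ast(\rho)(T))\cong\bar{\delta}_{\rho_1(f)}$, so $H^\ast(\rho)(T)$ is distinguished. The main obstacle I anticipate is purely organizational: carrying out the entry-wise Taylor expansion of $\Tw_{\le 1}\rho$ on $i$ and $p$ with correct Koszul signs; the conceptual point---that every higher contribution dies from the nilpotence and sparsity of $\delta_{C(f)}$---is clean.
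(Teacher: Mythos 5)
Your proposal follows exactly the same route as the paper: set up the commutative square with $\Phi$ and $\Tw_{\le 1}\rho$, check the arity bounds from Lemmas \ref{An-ness} and \ref{functoriality of Tw} so that $H^\ast(\Tw_{\le 1}\rho)$ is a graded functor, and verify $H^\ast(\Tw_{\le 1}\rho)(\bar{\delta}_f)=\bar{\delta}_{\rho_1(f)}$. You simply unwind the paper's ``it is clear'' and ``one checks'' into the explicit entry-wise computation with the sparse Maurer--Cartan matrix, and that computation is correct.
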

\begin{proof}
It is clear that there is a commutative diagram
\[
\xymatrix{
\aa\ar[r]^-{\Phi}\ar[d]_{\rho}& \Tw_{\le 1}\aa\ar[d]^{\Tw_{\le 1}\rho}\\
\bb\ar[r]_-{\Phi}&\Tw_{\le 1}\bb
}
\]
By Lemma \ref{An-ness} $\Tw_{\le 1}\aa$ and  $\Tw_{\le 1}\bb$ are $A_3$-categories and by Lemma \ref{functoriality of Tw}, $\Tw_{\le 1}\rho$ is an $A_2$-functor. Hence $H^\ast(\Tw_{\le 1}\rho)$ is a graded functor (see \S\ref{sec:Ancatsandfunctors}).
One checks $H^\ast(\Tw_{\le 1}\rho)(\bar{\delta}_f)=\bar{\delta}_{\rho_1(f)}$. This implies what we want.
\end{proof}
\section{DG-categories}
\subsection{Generalities}
Recall that a DG-category is an $A_\infty$-category such that $m_i=0$ for $i\ge 3$. It that case $\Tw \aa$ is also a DG-category.
We recall the following definition.
\begin{definition} \cite{Bondal5}
A DG-category is \emph{pre-triangulated} if the DG-functor
$\Phi:\aa \r \Tw\aa$ is a quasi-equivalence.
\end{definition}
The main result concerning pre-triangulated DG-categories is 
\begin{theorem} \cite{Bondal5} \label{th:bondal}
If $\aa$ is pre-triangulated then $H^\ast(\aa)$, when equipped with distinguished triangles as in Definition \ref{triangulatedstructure}, is triangulated.
\end{theorem}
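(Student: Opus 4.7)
The plan is to use pre-triangulatedness to reduce to verifying the triangulated axioms inside $H^\ast(\Tw_{\le 1}\aa)$, where cones are explicit $2\times 2$ matrix objects $(\Sigma A\oplus B,\delta_{C(f)})$, and then to check the axioms by direct manipulation of these matrices, mirroring the classical Bondal--Kapranov strategy.

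First I would establish that the fully faithful inclusion $\Phi:\aa\hookrightarrow \Tw_{\le 1}\aa$ is itself a quasi-equivalence. Full faithfulness holds by construction. For essential surjectivity of $H^0(\Phi)$: the composite $\aa\to \Tw_{\le 1}\aa\to \Tw\aa$ is a quasi-equivalence by hypothesis, while $\Tw_{\le 1}\aa\to \Tw\aa$ is fully faithful; hence any object of $H^0(\Tw_{\le 1}\aa)$ is isomorphic in $H^0(\Tw\aa)$ to some object of $H^0(\aa)$, and this isomorphism descends to $H^0(\Tw_{\le 1}\aa)$ by fullness. The induced graded functor $H^\ast(\Phi):H^\ast(\aa)\xrightarrow{\sim} H^\ast(\Tw_{\le 1}\aa)$ is therefore an equivalence, so by Definition \ref{triangulatedstructure} it suffices to verify TR0--TR4 inside $H^\ast(\Tw_{\le 1}\aa)$ equipped with the distinguished triangles given by isomorphism classes of standard distinguished triangles.

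Next I would dispose of TR0--TR3. TR0 is immediate since $\Tw_{\le 1}\aa=\Free(\aa)^{\ast 2}$ contains $\Free(\aa)$ with its canonical shift, direct sums, and zero object. For TR1 the trivial triangle comes from the fact that $C(\Id_X)$ is null-homotopic (via the evident degree $-1$ off-diagonal endomorphism) and so isomorphic to $0$ in $H^0$; existence of mapping cones is the content of Definition \ref{def:cone}. For TR2 I would check each rotation of a standard distinguished triangle $A\xrightarrow{f}B\xrightarrow{i}C(f)\xrightarrow{p}A$: the key computation is that $C(i)\cong \Sigma A$ in $H^0$, because when one expands $C(i)$ the summand $B$ in $C(f)$ cancels acyclically against $\Sigma B$, and an analogous computation handles the other rotation. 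Here one uses the quasi-equivalence of the first step to replace $C(f)$ by a quasi-isomorphic object of $\aa$ so that $i$ may be realized as a morphism in $\aa$, making the rotated triangle standard. For TR3, given a commutative square in $H^\ast(\Tw_{\le 1}\aa)$ between two standard distinguished triangles, I would lift the given morphisms to closed morphisms of $\aa$; the chosen homotopy witnessing their commutativity at the DG level provides the off-diagonal entry of the required third morphism, which is then given explicitly in the matrix form dictated by the cones.

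The main obstacle is TR4, the octahedral axiom. Given closed morphisms $u:X\to Y$ and $v:Y\to Z$ in $\aa$, the three cones $C(u)$, $C(v)$, $C(vu)$ fit into natural maps between them and I must exhibit a distinguished triangle $C(u)\to C(vu)\to C(v)\to C(u)$ realizing the comparison. Classically this arises from an isomorphism $C(vu)\cong C(C(u)\to C(v))$, a routine matrix identity; however, $C(C(u)\to C(v))$ a priori lives in $\Tw_{\le 2}\aa$ rather than in $\Tw_{\le 1}\aa$, so to produce a standard distinguished triangle I must use the quasi-equivalence $\aa\simeq \Tw_{\le 1}\aa$ to replace $C(u)$ and $C(v)$ by quasi-isomorphic objects in $\aa$, and then verify that all the resulting diagrams commute in $H^\ast$. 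Keeping track of signs from the $(-1)^{a}\sigma^{a}f$ convention on $\Free(\aa)$ is the principal technical nuisance, but these signs are formally determined and the remaining verification is a matter of careful bookkeeping.
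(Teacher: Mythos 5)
Your proof takes a genuinely different route from the paper's. Since $\aa$ is a DG-category here (not merely an $A_n$-category), the full category $\Tw\aa$ exists, and the paper simply cites the original Bondal--Kapranov theorem that $H^\ast(\Tw\aa)$ is triangulated, transfers this triangulation across the quasi-equivalence $\Phi:\aa\to\Tw\aa$, and then checks a single compatibility statement: a triangle in $H^\ast(\aa)$ whose image in $H^\ast(\Tw\aa)$ is distinguished in the Bondal--Kapranov sense is also distinguished in the sense of Definition \ref{triangulatedstructure} (and conversely). This last step is a short argument using TR1 and TR3 for $H^\ast(\Tw\aa)$ and the full faithfulness of $\Tw_{\le 1}\aa\hookrightarrow\Tw\aa$. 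You instead re-derive all of TR0--TR4 from scratch inside $H^\ast(\Tw_{\le 1}\aa)$, using the observation (correct, and also useful) that $\Phi:\aa\to\Tw_{\le 1}\aa$ is itself a quasi-equivalence, and a ``replace cones by quasi-isomorphic objects of $\aa$'' trick to stay inside $\Tw_{\le 1}\aa$. In spirit this is much closer to what the paper does for the harder Theorem \ref{th:mainth} (pre-triangulated $A_n$-categories), where $\Tw\aa$ genuinely does not exist and such a from-scratch verification, organized via the auxiliary small categories $I_m$, is unavoidable.

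The concern with your version is that the hard cases are more delicate than your write-up suggests. For TR2 and TR4 you rely on a ``routine matrix identity'' such as $C(i)\cong\Sigma A$ or $\Sigma C(vu)\cong C(C(u)\to C(v))$; but after you have replaced $C(f)$ (or $C(u)$, $C(vu)$) by quasi-isomorphic objects $C_f\in\Ob(\aa)$ and the map $i$ by a closed map in $\aa$ homotopic to it, the cone you actually form is $C(\Phi(i'))$ for some $i'\in Z^0\aa$, and its identification with $\Sigma A$ is no longer a literal matrix cancellation: it needs to be deduced from the cancellation for $C(i)$ in $\Tw\aa$ together with full faithfulness of $\Tw_{\le 1}\aa\hookrightarrow\Tw\aa$ and the fact that cones are determined up to isomorphism. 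In other words, the cleanest justification of the steps you label routine already passes through $\Tw\aa$ and the axioms for $H^\ast(\Tw\aa)$ — at which point one might as well invoke Bondal--Kapranov directly, as the paper does. Your argument is not wrong, but as written it underestimates the bookkeeping and effectively re-proves, in a less economical way, a result the paper gets for free from the DG hypothesis; if you want a proof that really avoids $\Tw\aa$, the systematic route is the $I_m$-strategy of Theorem \ref{th:mainth}.
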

\begin{proof} Assume first that $\aa$ is a general DG-category. 
$\Tw\aa$ is equipped with a natural cone functor $C(f)$ and
a notion of standard triangles $\delta_f$ for any closed map $f:A\r B$:
\[
A\xrightarrow{f} B\xrightarrow{i} C(f)\xrightarrow[(1)]{p} A.
\]
A triangle in $\Tw\aa$ is called distinguished if it is isomorphic
to a standard triangle. 
In \cite{Bondal5} it is proved  that 
$H^\ast(\Tw \aa)$ is triangulated when equipped with this class of distinguished triangles. If $\aa$ is pre-triangulated then $H^\ast(\aa)$ inherits the triangulated
structure from $H^\ast(\Tw \aa)$. We have to prove that the distinguished
triangles are the same as those in Definition \ref{triangulatedstructure}.
Assume that 
\[
\bar{\delta}: A\xrightarrow{f} B\r C\xrightarrow[(1)]{} A
\]
is a triangle in $H^\ast(\aa)$ distinghuished in the sense of \cite{Bondal5}, i.e. $\Phi(\bar{\delta})$ is distinguished in $H^\ast(\Tw \aa)$. Now
$\bar{\delta}_{\Phi(f)}\in H^\ast(\Tw_{\le 1}\aa)$ is a distinguished triangle in $H^\ast(\Tw\aa)$ which
has the same base as $\bar{\delta}_f$. By the axioms for triangulated
categories we conclude that $\Phi(\bar{\delta})\cong \bar{\delta}_{\Phi(f)}$.
Hence $\bar{\delta}$ is distinguished in the sense of Definition  \ref{triangulatedstructure}. The opposite direction is similar.
\end{proof}
\subsection{Some small $DG$-categories}
\label{sec:diagcats}
\begin{definition} Let $n\ge 0$. Then $I_n$ is the DG-category with objects $(x_i)_{i=0}^n$ such that
\[
I_n(x_i,x_j)=
\begin{cases}
ka_{ij}&\text{if $i<j$}\\
k\Id_{x_i}&\text{if $i=j$}\\
0&\text{otherwise}
\end{cases}
\]
with $|a_{ij}|=0$, $a_{jl}a_{ij}=a_{il}$ and $da_{ij}=0$. We will write $a_i=a_{i,i+1}$ for $i=0,\ldots n-1$.
\end{definition}
\begin{lemma} 
\label{lem:small-pretri} $\Tw_{\le 1} I_n$ is pre-triangulated.
\end{lemma}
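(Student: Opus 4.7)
\emph{Plan.} I would show that the natural fully faithful inclusion \(\iota\colon \Tw_{\le 1} I_n \hookrightarrow \Tw I_n\) is a quasi-equivalence. Since \(\Tw I_n\) is pre-triangulated by Theorem \ref{th:bondal}, and since pre-triangulatedness of DG-categories is invariant under quasi-equivalence (the functor \(\Tw\) preserves quasi-equivalences, so from the commutative square with vertices \(\aa,\bb,\Tw\aa,\Tw\bb\) one reads off by two-out-of-three that \(\Phi_\aa\) is a quasi-equivalence iff \(\Phi_\bb\) is), this will suffice.

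Fully faithfulness of \(\iota\) is built into the definitions, so the heart of the argument is essential surjectivity of \(H^0(\iota)\). Because the Hom complexes of \(I_n\) are concentrated in degree zero, objects of \(\Tw I_n\) are literally bounded chain complexes whose terms are finite direct sums of the \(x_k\), and \(H^0(\Tw I_n)\) is the homotopy category \(K^b(\proj(I_n))\) of such complexes. The key observation is that \(I_n\), viewed as a \(k\)-algebra \(\bigoplus_{i,j} I_n(x_i,x_j)\), is the path algebra of the linear quiver \(x_0\to\cdots\to x_n\) and is therefore hereditary, so \(K^b(\proj(I_n))\simeq D^b(\mod(I_n))\). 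Over a hereditary algebra every bounded complex is formal, so every object of \(H^0(\Tw I_n)\) is isomorphic to \(\bigoplus_j M_j[-j]\) for finitely generated modules \(M_j\), and each \(M_j\) admits a 2-term projective resolution \(0\to P_1^{(j)}\to P_0^{(j)}\to M_j\to 0\). Assembling the direct sum of these shifted 2-term resolutions produces a single object of \(\Tw_{\le 1}I_n=\Free(I_n)^{*2}\) representing the given object of \(H^0(\Tw I_n)\): one collects all the \(P_0^{(j)}\), suitably shifted, into the top block and all the \(P_1^{(j)}\), suitably shifted, into the bottom block, with block-diagonal~\(\delta\).

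The main obstacle I foresee is the bookkeeping required to verify that such a direct sum of shifted 2-term resolutions genuinely fits into the strict 2-block format of \(\Tw_{\le 1}=\Free^{*2}\). This works because within each block of \(\Free(I_n)^{*2}\) one is allowed arbitrary objects of \(\Free(I_n)\), so the various cohomological degrees in which the individual resolutions sit are absorbed by internal \(\Sigma^a\)-shifts rather than creating extra block positions; the degree-\(1\) requirement on \(\delta\) then matches the resolution differentials across the appropriate shifts. Modulo these verifications, combining the two displays above yields the pre-triangulatedness of \(\Tw_{\le 1}I_n\).
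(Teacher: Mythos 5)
Your proposal is correct and follows essentially the same route as the paper: reduce to essential surjectivity of the fully faithful inclusion $\Tw_{\le 1} I_n \hookrightarrow \Tw I_n$, identify $H^\ast(\Tw I_n)$ with the bounded derived category of modules over the hereditary path algebra of $I_n$, use formality to split every object as a direct sum of shifted modules, and observe that each such module has a two-term projective resolution which assembles (via internal $\Sigma$-shifts in $\Free(I_n)$) into a single object of $\Free(I_n)^{\ast 2}$. The only cosmetic differences are that the paper phrases the ``sufficiency'' step directly rather than through the explicit two-out-of-three square, and cites Rouquier for the core argument before repeating it.
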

\begin{proof} As $\Tw I_n$ is pre-triangulated \cite{Bondal5} it is sufficient
to prove that $H^\ast(\Tw_{\le 1} I_n)\r H^\ast(\Tw I_n)$ is essentially
surjective. This is essentially \cite[Proposition 7.27]{Rouquier}.
For the convenience of the reader we repeat the argument.
The Yoneda embedding  realizes $H^\ast(\Tw I_n)$ as the bounded
derived category $D^b(\rep(I_n))$ of the representations of $I_n$, viewed as quiver. Since
$\rep(I_n)$ is a hereditary category every object in $D^b(\rep(I_n))$ 
is the direct sum of its (shifted) cohomology objects which are in $\rep(I_n)$. Moreover every object in $\rep(I_n)$ has
projective dimension one and so it is isomorphic to a single cone
of objects in $\Free(I_n)$. In other words it is in the essential image
of $H^\ast(\Tw_{\le 1} I_n)$.
\end{proof}
\begin{remark} \label{rem:small-pretri} Assume $n=0$. Then $\rep(I_0)$ has global dimension zero and
we have in fact that $\Free(I_0)=\Tw_{\le 0}I_0$ is pre-triangulated.
\end{remark}
\section{Pre-triangulated $A_n$-categories}
From now on let $\aa$ be an $A_n$-category. The purpose of this section is to define what it means for $\aa$ to be pre-triangulated and to show that this definition implies that $H^\ast(\aa)$ is triangulated. 
\begin{definition} 
An $A_n$-category $\aa$, with $n\geq 7$, is said to be \emph{pre-triangulated} if the inclusion
$ \aa \xrightarrow{\Phi} \Tw_{\leq 1} \aa $
is a quasi-equivalence.
\end{definition}
\begin{remark} The lower bound $n\geq 7$ comes from the fact that we want 
 $H^\ast(\Tw_{\le 1}\aa)$ to be an honest category. This happens when $\Tw_{\le 1}\aa$ is an $A_3$-category.
For this to be true $\aa$ needs to be at least an $A_7$-category by Lemma \ref{An-ness}. 
\end{remark}
\begin{theorem}
\label{th:mainth}
Let $\aa$ be a pre-triangulated $A_n$-category for $n\ge 13$.
When equipped with the collection of distinguished triangles as in Definition \ref{triangulatedstructure},  $H^\ast(\aa)$ is a triangulated category.
\end{theorem}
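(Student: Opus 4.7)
The plan is to verify the axioms TR0--TR4 for $H^\ast(\aa)$ equipped with the distinguished triangles from Definition \ref{triangulatedstructure}. Since $\aa$ is pre-triangulated, $\Phi:\aa\r\Tw_{\le 1}\aa$ is a quasi-equivalence, so $H^\ast(\Phi)$ is an equivalence of graded categories; because distinguished triangles in $H^\ast(\aa)$ are by definition the ones whose image under $H^\ast(\Phi)$ is isomorphic to a standard triangle in $H^\ast(\Tw_{\le 1}\aa)$, it will be enough to check the axioms for the class of standard triangles (up to isomorphism) inside $H^\ast(\Tw_{\le 1}\aa)$. For constructions which naturally produce cones of maps between $2$-term twisted complexes (needed for TR2 and TR4) I will embed $\Tw_{\le 1}\aa$ into $\Tw_{\le m}\aa$ for $m=2,3$; by Lemma \ref{An-ness}, for $n\ge 13$ these are respectively $A_3$ and $A_2$, so that $H^\ast(\Tw_{\le 2}\aa)$ is already a genuine graded category in which one may compose and check associativity, and the standard $A_\infty$-style manipulations with explicit contracting homotopies are available.

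Axioms TR0 and TR1 will be immediate: $\Free(\aa)\subset\Tw_{\le 1}\aa$ provides finite direct sums and a (de)suspension functor via \eqref{eq:shiftfree}; the cone $C(\Id_X)$ carries the obvious null-homotopy, so $\delta_{\Id_X}$ is isomorphic to the identity triangle; and any closed degree-zero representative of a given $f$ in $H^\ast(\aa)$ furnishes the required mapping cone $C(f)$. For TR2 (rotation) I will embed a standard triangle $\delta_u:A\r B\r C(u)\r A$ into $\Tw_{\le 2}\aa$ and write down explicit closed maps $C(i)\leftrightarrows\Sigma A$, where $i:B\r C(u)$, together with an $A_3$-homotopy witnessing that they are mutually inverse; this identifies the rotated triangle with the standard triangle $\delta_i$. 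TR3 should follow from the classical formula for the induced map $C(\alpha,\beta):C(f)\r C(f')$ as a matrix built from $\alpha$, $\beta$ and a homotopy realising $\beta f\simeq f'\alpha$; all the computations here take place inside $\Tw_{\le 1}\aa$, which is already $A_6$.

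The hard part will be TR4, the octahedron axiom. Given composable $u:X\r Y$ and $v:Y\r Z$ lifted to closed degree-zero morphisms in $\aa$, the plan is to form the length-$3$ twisted complex $(X\oplus Y\oplus Z,\delta)\in\Tw_{\le 2}\aa$ whose Maurer-Cartan element packages $su$, $sv$ and a chosen null-homotopy of $m_2(sv,su)$. Its three natural subquotients should then be isomorphic in $H^\ast(\Tw_{\le 2}\aa)$ to $C(u)$, $C(vu)$ and $C(v)$, and the canonical maps $C(u)\r C(vu)\r C(v)$ will fit into the desired octahedral triangle, whose distinguished status amounts to identifying $C(v)$ with the cone of $C(u)\r C(vu)$ after simplifying a $4$-term twisted complex inside $\Tw_{\le 3}\aa$. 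The commutativity conditions demanded by TR4 will translate into the Maurer-Cartan equation $\sum b_i(s\delta,\dots,s\delta)=0$ combined with $A_3$-associativity on cohomology, both of which are available exactly under the hypothesis $n\ge 13$; the numerical bound arises precisely from needing both $\Tw_{\le 2}\aa$ and $\Tw_{\le 3}\aa$ to be $A_p$-categories for $p$ large enough to perform these identifications and their associativity checks.
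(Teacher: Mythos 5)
Your proposal takes a genuinely different route from the paper, and that route has a concrete gap in the TR4 step.

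The paper's strategy deliberately avoids the direct computations you sketch. Instead of verifying the axioms by hand inside $\Tw_{\le m}\aa$, it encodes the input data of each axiom (a morphism, a commuting square, a composable pair) as an $A_n$-functor $\mu:I_m\r\aa$ from one of the small DG-categories $I_0,I_1,I_2$ of \S\ref{sec:diagcats}. Because $I_m$ is an honest DG-category, $\Tw_{\le 1}I_m$ is pre-triangulated (Lemma \ref{lem:small-pretri}) and $H^\ast(\Tw_{\le 1}I_m)$ is already a triangulated category by Theorem \ref{th:bondal}, so all of TR1--TR4 hold there \emph{for free}. The conclusion is then transported to $H^\ast(\aa)$ via the induced functor $\Tw_{\le 1}\mu$ followed by a homotopy inverse $\pi$ to $\Phi$, using the exactness result Theorem \ref{bla}. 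Nothing happens in $\Tw_{\le 2}\aa$ or $\Tw_{\le 3}\aa$ at all, and the octahedron is never built explicitly. That is precisely how the paper escapes the ``nightmare'' of explicit equations mentioned in the introduction, and what makes the bound $n\ge 13$ achievable.

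The gap in your plan: by Lemma \ref{An-ness}, $\Tw_{\le 3}\aa$ is an $A_{\lfloor (n-3)/4\rfloor}$-category, which for $n=13,14$ gives only $A_2$. In an $A_2$-category the composition on cohomology need not be associative, so $H^\ast(\Tw_{\le 3}\aa)$ is not a genuine category, inverses are not well-defined, and ``identifying $C(v)$ with the cone of $C(u)\r C(vu)$ after simplifying a $4$-term twisted complex'' has no meaning. Your own sentence invoking ``$A_3$-associativity on cohomology'' available at $n\ge 13$ is therefore wrong for $\Tw_{\le 3}\aa$: you would need $n\ge 15$ just to make $H^\ast(\Tw_{\le 3}\aa)$ a category, and realistically a higher $p$ to carry out the homotopy manipulations your TR4 argument requires. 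Even your TR2 step (writing explicit closed maps $C(i)\leftrightarrows\Sigma A$ with a witnessing $A_3$-homotopy in $\Tw_{\le 2}\aa$) and TR3 step (the matrix formula for the cone map with a chosen null-homotopy of $\beta f - f'\alpha$) involve precisely the sign- and homotopy-juggling the paper characterizes as unmanageable, and you give no evidence these can be closed up within the stated arity bounds. Your TR0 and TR1 are fine and essentially agree with the paper (both use the quasi-equivalence $\Phi$ to move between $\aa$ and $\Tw_{\le 1}\aa$). If you want a completable proof, replace the direct constructions for TR2--TR4 with the paper's transfer from $H^\ast(\Tw_{\le 1}I_m)$.
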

\begin{proof}
Here is the ``strategy'': we have to prove that $H^\ast(\aa)$ satisfies TR0-TR4 as in \S\ref{sec:trianggraded}. For the TR1-TR4 axioms we will translate their input into a suitable $A_n$-functor $\mu:I_m\r \aa$, for $m\le 2$, which is then
extended to an $A_{\lfloor (n-1)/2\rfloor}$-functor $\Tw_{\le 1}\mu:\Tw_{\le 1}I_m\r \Tw_{\le 1} \aa$.
Then we use that $\Tw_{\le 1}I_m$ is pre-triangulated by Lemma \ref{lem:small-pretri} and hence in particular $H^\ast(\Tw_{\le 1}I_m)$ is  triangulated by Theorem \ref{th:bondal}. 
Roughly speaking we then transfer the output of the TR1-TR4-axioms for $H^\ast(\Tw_{\le 1}I_m)$ to $H^\ast(\aa)$ by using Theorem \ref{bla}.

To accomplish the last step we will pick an $A_p$-functor $\pi: \Tw_{\leq 1} \aa\to \aa$, for 
$p=\lfloor (n-1)/2\rfloor-1=\lfloor (n-3)/2\rfloor$, which is a homotopy inverse to $\Phi$ such that $\pi\Phi$ is the identity (see Lemmas \ref{An-ness}, \ref{lem:ffcase}).
In particular we have that $H^\ast(\Phi)$ and $H^\ast(\pi)$ are quasi-inverses to each other. Since $n\ge 13$,
$\Tw_{\le 1}\mu$ is at least an $A_6$-functor and $\pi$ is at least an $A_5$-functor. 
So $H^\ast(\pi \Tw_{\le 1}\mu)$ preserves distinguished triangles by Theorem \ref{bla}. % \marginpar{This might actually be enough?}
To avoid making some arguments
needlessly cumbersome we will in fact also use
that $H^\ast(\Tw_{\le 1}\mu)$ preserves standard distinguished triangles and that $H^\ast(\pi)$ sends a standard distinguished triangle in $H^\ast(\Tw_{\le 1}\aa)$ to a distinguished triangle in $H^\ast(\aa)$. The latter
follows easily from the fact that $H^\ast(\pi)$ is a quasi-inverse to $H^\ast(\Phi)$. Note that the intermediate category $\Tw_{\le 1}\aa$ may be only an $A_6$-category
so, with our current definitions, we cannot talk about distinguished triangles in\footnote{We could have eliminated this minor technical complication by simply requiring $n\ge 15$.}  $H^\ast(\Tw_{\le 1}\aa)$.
\begin{enumerate}
\item[TR0]  
Like $\Free(\aa)$ (see \S\ref{sec:free}), $\Tw_{\le 1}\aa$ is equipped with canonical operations $\Sigma$ and $\oplus$. These descend to operations 
on $H^\ast(\Tw_{\le 1}\aa)$ which one easily checks to be to be the categorical direct sum and shift functor.
Since $H^\ast(\aa)\r H^\ast(\Tw_{\le 1}\aa)$ is an equivalence, the direct sum and shift functor
defined on $H^\ast(\Tw_{\le 1}\aa)$ descend to $H^\ast(\aa)$.
\item[TR1] First we note that the triangle 
\begin{equation}
\label{eq:trivial}
A\xrightarrow{\id_A}A\xrightarrow{} 0\xrightarrow[(1)]{} A
\end{equation}
 is distinguished. Indeed: the functor $\mu:I_0\r \aa:x_0\mapsto A$ extends to a functor\footnote{The reader will note that here the literal execution of our ``strategy'' is a bit uneconomical and that by Remark \ref{rem:small-pretri} we could have used
$\Tw_{\le 0}I_0$.}
 $\mu:\Tw_{\le 1}I_0\r \Tw_{\le 1}\aa$ and \eqref{eq:trivial} is the image under $H^\ast(\pi\Tw_{\le 1}\mu)$ of the distinguished triangle in $H^\ast(\Tw_{\le 1}I_0)$ (which satisfies TR1)
\[
x\xrightarrow{\id_{x_0}}x\xrightarrow{0} 0\xrightarrow[(1)]{0} x
\]
Now we prove the second part of the TR1 conditions: the existence of distinguished triangles with a given base. Consider a map $A\xrightarrow{f}B$ in $H^\ast(\aa)$ and put $\bar{\delta}=H^\ast(\pi)(\bar{\delta}_f)$. Since $\bar{\delta}_f$ is a standard distinguished triangle in $\Tw_{\leq 1} \aa$, $\bar{\delta}$ is distinguished.
%As $H^\ast(\Phi)(\delta)\cong \bar{\delta}_f$, $\delta$ is distinguished.
%\marginpar{Alternatively, one can take the strict $A_n$-functor $\mu:I_1\r \aa$ which sends $a_0$ to $f$. Then use that $H^\ast(\Tw_{\le 1}I_1)$ satisfies TR1 so that we can complete that morphism to a distinguished triangle $\bar{\delta}_{a_0}$, and apply  $H^\ast(\pi\Tw_{\le 1}\mu)$ }
\medskip

Finally, the fact that any triangle isomorphic to a distinguished triangle is distinguished follows immediately from Definition \ref{triangulatedstructure}.
\item[TR2]
Let $\bar{\delta}$ be a distinguished triangle in $H^\ast(\aa)$. Then there exists an isomorphim with a standard triangle $H^\ast(\Phi)(\bar{\delta})\cong \bar{\delta}_f$ and
hence in particular $\bar{\delta}\cong H^\ast(\pi)(\bar{\delta}_f):=\bar{\delta'}$. 
There is a strict $A_n$-functor $\mu:I_1\r \aa$ which sends $a_0$ to $f$ and $\bar{\delta}_f$ is the image of $\bar{\delta}_{a_0}\in H^\ast(\Tw_{\le 1}I_1)$
under the morphism $H^\ast(\Tw_{\le 1}\mu)$.
Since $H^\ast(\Tw_{\le 1}I_1)$ satisfies TR2, the rotated versions of $\bar{\delta}_{a_0}$ are distinguished in $H^\ast(\Tw_{\le 1} I_1)$
and we obtain rotated versions of $\bar{\delta}'$ by applying $H^\ast(\pi\Tw_{\le 1}\mu)$ (note that a graded functor preserves suspensions and desuspensions).
By TR1 the corresponding rotated versions of $\bar{\delta}$ are
also distinguished.
\item[TR3] 
Suppose we have a diagram of distinguished triangles in $H^0(\aa)$
\begin{equation}\label{TR3diagram}
\xymatrix{
A \ar[r]^f \ar[d]_u&B\ar[r]\ar[d]^v &C \ar[r]^{(1)} &A\ar[d]^u\\
A' \ar[r]_{f'} & B' \ar[r] & C' \ar[r]_{(1)} & A'
}\end{equation}
Up to composing with an isomorphism of triangles, we can assume that the two distinguished triangles in the diagram are standard distinguished triangles in $\Tw_{\le 1}\aa$ so that
$C=C(f)$, $C'=C(f')$. Hence we have to construct the dotted arrow
in
\[
\xymatrix{
A \ar[r]^f \ar[d]_u&B\ar[r]\ar[d]^v &C(f)\ar@{.>}[d]^w \ar[r]^{(1)} &A\ar[d]^u\\
A' \ar[r]_{f'} & B' \ar[r] & C(f') \ar[r]_{(1)} & A'
}
\]
It is easy to give a formula for $w$. Alternatively one may lift the
square on the left to an $A_n$-functor $I\otimes I\r \aa$ and 
then proceed by considering the induced functor $\Tw_{\le 1}(I\otimes I)\r \Tw_{\le 1}\aa$.

We will give instead a proof compatible with our ``strategy''.  By writing the solid square as a composition of 2 squares
it is sufficient to consider the case in which either $u$ or $v$ is the identity. The two cases are similar so we will consider the first one. Now
the diagram is
\begin{equation}
\label{eq:basic}
\xymatrix{
A \ar[r]^f \ar@{=}[d]&B\ar[r]\ar[d]^v &C(f)\ar@{.>}[d]^w \ar[r]^{(1)} &A\ar@{=}[d]\\
A \ar[r]_{f'u} & B' \ar[r] & C(f'u) \ar[r]_{(1)} & A
}
\end{equation}
We may construct an $A_n$-morphism $\mu:I_2\r \aa$ such that $\mu_1(a_0)=f$, $\mu_1(a_1)=v$, $\mu_1(a_1a_0)=f'u$ (note that we need a non-trivial $\mu_2$ as $vf$ is not necessarily equal to $f'u$ in $\aa$). Inside $H^\ast(\Tw_{\le 1}I_2)$ we have
the diagram
\[
\xymatrix{
x_0 \ar[r]^{a_0} \ar@{=}[d]&x_1\ar[r]\ar[d]^{a_1} &C(a_0)\ar@{.>}[d] \ar[r]^{(1)} 
&x_0\ar@{=}[d]\\
x_0 \ar[r]_{a_1a_0} & x_2 \ar[r] & C(a_1a_0) \ar[r]_{(1)} & x_0
}
\]
where now the dotted arrow exists as $H^\ast(\Tw_{\le 1}I_2)$ satisfies TR3.
Applying $H^\ast(\Tw_{\le 1}\mu)$ we obtain \eqref{eq:basic}.
\item[TR4]
Since we have shown TR1-TR3, by \cite[1.1.6]{BBD} it suffices to show that
any  composable pair of degree zero morphisms $X\r Y\r Z$ in  $H^\ast(\aa)$
can be completed to an octahedron as in \eqref{eq:octahedron1}.

A composable pair of degree zero morphisms in $H^\ast(\aa)$ can be lifted to an $A_n$-functor
 $\mu:I_2\r \aa$.
The image of the octahedron in  $H^\ast(\Tw_{\le 1}I_2)$
built on 
$x_0\xrightarrow{a_0}x_1\xrightarrow{a_1}x_2$
under $H^\ast(\pi \Tw_{\le 1}\mu)$ is now the sought octahedron in~$H^\ast(\aa)$. \qedhere
\end{enumerate}
\end{proof}
\section{Gluing $A_n$-categories}
\subsection{Bimodules}
\label{sec:bimodules}
Let $\aa$, $\bb$ be $A_n$-categories. An $A_{n+1}$-$\bb$-$\aa$-bimodule is
collection of graded vector spaces $M(A,B)$, $A\in \Ob(\aa)$,
$B\in \Ob(\bb)$ together with a codifferential on
$(B^+ \bb\otimes M\otimes B^+\aa)_{\le n+1}$ where the latter is regarded as a
DG-$(B^+\aa)_{\le n}{-}(B^+\bb)_{\le n}$-bicomodule.  In other words,
such a bimodule is equipped with higher operations of degree one
\begin{equation}
\label{eq:higher}
b_M: \Sigma\bb(B_{p-1},B_{p})\otimes \cdots \otimes \Sigma\bb(B_{a+1},B_{a+2}) \otimes M(A_a,B_{a+1})\otimes \Sigma\bb(A_{a-1},A_{a})\otimes \cdots \otimes \Sigma\bb(A_0,A_1)
\r M(A_0,B_p)
\end{equation}
for $(A_i)_i\in \Ob(\aa)$, $(B_j)_j\in \Ob(\bb)$
for $p\le n+1$ such that $b\circ b=0$. In addition we require that the higher operations vanish on identities, when appropriate.
If $\aa=\bb$ then the \emph{identity $A_n$-$\aa$-bimodule} is given by $M(A,A')=\aa(A,A')$ and the higher operations are those of $\aa$.

\medskip

If $\aa_1,\aa_2,\bb_1,\bb_2$ are $A_n$-categories, $f_i:\aa_i\r\bb_i$ are $A_{n}$-functors and $M$ is an $A_{n+1}$- $\bb_2$-$\bb_1$-bimodule
  then we write ${}_{f_1} M_{f_2}$ for the $\aa_2$-$\aa_1$-bimodule which is the pullback of $M$
along $(f_1,f_2)$. For $A_1\in \Ob(\aa_1)$, $A_2\in \Ob(\aa_2)$ we have ${}_{f_1}M_{f_2}(A_1,A_2)=M(f_1(A_1),f_2(A_2))$
and the higher operations on ${}_{f_1} M_{f_2}$
are schematically given by the following formula for $m\in {}_{f_1}M_{f_2}(A_1,A_2)$
\[
b_{{}_{f_1}M_{f_2}}(\ldots,m,\ldots)=\sum\pm b_M(f_2(\ldots),\ldots,f_2(\ldots),m,f_1(\ldots),\ldots,f_1(\dots))
\]
(the sign is given by the Koszul convention). It is easy to see that ${}_{f_1} M_{f_2}$ is an $A_{n+1}$-bimodule. If
$f_1$ or $f_2$ is the identity then we omit it from the notation.

\subsection{The arrow category}
\begin{definition}[The arrow category]
  Let $\aa$, $\bb$ be $A_n$-categories and let $M$ be a
  $\bb$-$\aa$-$A_n$-bimodule. 
The arrow category
  $\cc=\aa\xrightarrow{M}\bb$ has $\Ob(\cc)=\Ob(\bb)\coprod \Ob(\aa)$
  and morphisms for $B,B'\in \Ob(\bb)$, $A,A'\in \Ob(\bb)$ given by
  $\cc(A,A')=\aa(A,A')$, $\cc(B,B')=\bb(B,B')$ and $\cc(A,B)=M(A,B)$, 
  $\cc(B,A)=0$.
\end{definition} 
  It is easy to see that $\aa\xrightarrow{M}\bb$ becomes an $A_n$-category by combining the higher multiplications on   
$\aa$, $\bb$ and $M$ (as in \eqref{eq:higher}).

\medskip

Assume we have $A_n$-categories $\aa$, $\bb$, $\aa'$, $\bb'$ and $\bb$-$\aa$ and $\bb'$-$\aa'$ bimodules $M$ and $M'$. Below it will be convenient to consider
the category $A^\circ_{m}(\aa\xrightarrow{M}\bb,\aa'\xrightarrow{M'}\bb')$ of $A_m$-functors $F:(\aa\xrightarrow{M}\bb)\r (\aa'\xrightarrow{M'}\bb')$ such
that $F(\Ob(\aa))\subset \Ob(\aa')$, $F(\Ob(\bb))\subset \Ob(\bb')$. It is easy to see that $F$ contains the same data as $A_m$-functors $F_\aa:\aa\r \aa'$, $F_\bb:\bb\r \bb'$ together
with a $A_m$-bimodule morphism $F_M:M\r {}_{F_\aa}M'_{F_\bb}$. Sometimes we will write $F=(F_\aa,F_M, F_\bb)$.

\subsection{The gluing category}
\begin{definition}[The gluing category]
Assume $n\ge 1$.  Let $\aa$, $\bb$ be $A_n$-categories and let $M$ be a
  $\bb$-$\aa$-$A_n$-bimodule. The gluing category $\aa\coprod_M \bb$ is the full graded subgraph of $(\aa\xrightarrow{M}\bb)^{\ast 2}$ given by
objects of the form $(A\oplus B,\delta)$ with $A\in \Ob(\aa)$ and $B\in\Ob(\bb)$ (note that $\delta$ is simply an element of $Z^1 M(A,B)$).
\end{definition}
\begin{lemma} \label{lem:Aness-gluing}
$\aa\coprod_M\bb$ has the structure of an $A_{n-1}$ category with higher multiplications given by \eqref{MC multiplication}.
\end{lemma}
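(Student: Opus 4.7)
The plan is to verify that the higher multiplications given by \eqref{MC multiplication} are well-defined on $\aa\coprod_M\bb$ for $i\le n-1$ and that they satisfy the $A_{n-1}$-axioms. The key point is that in the general higher cone category $(\aa\xrightarrow{M}\bb)^{*2}$ one only obtains an $A_{\lfloor (n-1)/2\rfloor}$-structure (by Lemma \ref{higher operations}), but the gluing construction enforces a much stronger vanishing which upgrades this to $A_{n-1}$.

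First I will establish the crucial vanishing. For an object $(A\oplus B,\delta)\in\aa\coprod_M\bb$ the Maurer--Cartan element $\delta$ is a strictly lower triangular $2\times 2$ matrix whose single nonzero entry lies in $M(A,B)$. Now in the arrow category $\cc=\aa\xrightarrow{M}\bb$, any nonzero value of a higher composition $b_h$ applied to composable arrows uses at most one $M$-morphism, because the only way to pass from an $\aa$-object to a $\bb$-object is through $M$ and there are no arrows going back. Extending linearly to $\cc^{\oplus 2}$ via summation over paths through the two components, it follows that $b_h$ evaluated on any sequence of $\cc^{\oplus 2}$-matrices containing two or more copies of such a $\delta$ is identically zero. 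In particular $b_h(s\delta,\ldots,s\delta)=0$ for all $h\ge 2$, so the Maurer--Cartan equation for objects of $\aa\coprod_M\bb$ collapses to $b_1(s\delta)=0$, which is precisely the condition $\delta\in Z^1 M(A,B)$.

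Next I will rewrite \eqref{MC multiplication} in the gluing context. For composable arrows $g_1,\ldots,g_i$ in $\aa\coprod_M\bb$, only terms with $\sum_j l_j\le 1$ can contribute by the vanishing above; hence each $b_{\cc^{\oplus 2},h}$ occurring in the formula has arity $h=i+\sum_j l_j\le i+1$. This expression therefore makes sense precisely when $i+1\le n$, that is, for $i\le n-1$. I will then verify the $A_{n-1}$-relations by following the strategy of Lemma \ref{higher operations}: expand $(b\circ b)_i$ for $i\le n-1$ into nested expressions $b_p(\ldots,b_q(\ldots),\ldots)$ in $\cc^{\oplus 2}$, evaluated on lists built from the $g_j$'s interleaved with $\delta$'s. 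By the vanishing, each of $b_p$, $b_q$ contains at most one $\delta$, so $p,q\le n$ and all terms stay within the $A_n$-structure of $\cc^{\oplus 2}$. After grouping, the relation reduces to the $A_n$-identity of $\cc^{\oplus 2}$ combined with $b_1(s\delta)=0$, exactly as in Lemma \ref{higher operations}. The unit axioms are inherited directly from those of $(\aa\xrightarrow{M}\bb)^{*2}$.

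The main obstacle is conceptual rather than technical: one must recognize that the bimodule structure forces a vanishing one step stronger than the one used in Lemma \ref{higher operations}, namely that each individual $b_h$ can absorb at most one $\delta$ in total, not merely at most one $\delta$ per slot. Once this is in place all the detailed bookkeeping parallels the proof of Lemma \ref{higher operations} verbatim.
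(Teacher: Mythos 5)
Your proof is correct and follows essentially the same route as the paper's brief argument: in both cases the key observation is that at most one $\delta$ can appear in any nonzero term of the argument lists from \eqref{eq:arguments} — you phrase this via the vanishing $\cc(B,A')=0$ (no arrows from $\bb$-objects back to $\aa$-objects in $\cc=\aa\xrightarrow{M}\bb$), while the paper phrases the same fact as the lower-triangularity of the morphism matrices in $\aa\coprod_M\bb$ — which replaces the bound $\sum l_j\le (m-1)(i+1)$ from Lemma \ref{higher operations} by $\sum l_j\le 1$ and hence yields an $A_{n-1}$-structure.
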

\begin{proof} The proof is as in Lemma \ref{higher operations} except that now in the relevant argument lists in \eqref{eq:arguments} we can have at most one $\delta$, as the $(g_j)_j$ are now
represented by lower triangular $2\times 2$-matrices.
\end{proof}
\begin{remark} An alternative way of defining $\aa\coprod_M\bb$ is as follows. Let
$J_1$ be defined like $I_1$ (see \S\ref{sec:diagcats}) except that we put $|a_0|=1$. Then $\aa\coprod_M\bb$ may be identified with the full subcategory of $A_1(J_1,\aa\xrightarrow{M}\bb)$
consisting of $A_1$-functors $F:J_1\r (\aa\xrightarrow{M}\bb)$ such that $F(x_0)\in \Ob(\aa)$, $F(x_1)\in \Ob(\bb)$. It then follows from Lemma \ref{lem:An-m} that $\aa\coprod_M\bb$ is indeed
an $A_{n-1}$-category.
\end{remark}
The following will be our main result in this section.
\begin{theorem} \label{th:maingluing}
Assume that $n\ge 13$, that $\aa$, $\bb$ are pre-triangulated $A_n$-categories and that $M$ is an $A_n$-$\bb$-$\aa$-bimodule. Then $\aa\coprod_M\bb$ is a pre-triangulated
$A_{n-1}$ category. Moreover the obvious fully faithful functors $\varphi_\aa:H^\ast(\aa)\r H^\ast(\aa\coprod_M \bb)$, $\varphi_\bb:H^\ast(\bb)\r H^\ast(\aa\coprod_M \bb)$ preserve distinguished triangles. If $n\ge 14$ so that
$H^\ast(\aa\coprod_M\bb)$ is triangulated by Theorem \ref{th:mainth} and Lemma \ref{lem:Aness-gluing}
then $\varphi_\aa$, $\varphi_\bb$ give rise to a semi-orthogonal decomposition
\begin{equation}
\label{eq:semi-orth}
\textstyle H^\ast(\aa\coprod\nolimits_M\bb)=\langle H^\ast(\aa),H^\ast(\bb)\rangle
\end{equation}
whose associated bimodule (see \S\ref{intro:gluing}) is $H^\ast(M)$.
\end{theorem}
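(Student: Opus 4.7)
The plan is to verify the three assertions in sequence: first the pre-triangulation of $\cc := \aa\coprod_M\bb$, then preservation of distinguished triangles by $\varphi_\aa,\varphi_\bb$, and finally the semi-orthogonal decomposition. The main technical content lies in the first claim; the latter two follow readily from the construction of $\cc$ as a full subgraph of $(\aa\xrightarrow{M}\bb)^{\ast 2}$ together with the general machinery of Section \ref{sec:prelimAn}.

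For pre-triangulation, I have to show that the strict fully faithful inclusion $\Phi:\cc\to\Tw_{\le 1}\cc$ is essentially surjective on $H^\ast$. Any object of $\Tw_{\le 1}\cc$ is isomorphic to a cone $C(g)$ in $\Tw_{\le 1}\cc$ of a closed degree-zero morphism $g:X\to Y$ in $\Free(\cc)$, with $X=(A,B,\delta_X)$ and $Y=(A',B',\delta_Y)$, and $g$ decomposes as a triple $(g_\aa,g_M,g_\bb)$ with $g_\aa\in\aa$, $g_\bb\in\bb$, $g_M\in M$. Since $\aa,\bb$ are pre-triangulated, we may choose $C_\aa\in\aa$ and $C_\bb\in\bb$ together with quasi-isomorphisms $C_\aa\simeq C(g_\aa)$ in $\Tw_{\le 1}\aa$ and $C_\bb\simeq C(g_\bb)$ in $\Tw_{\le 1}\bb$, via the appropriate quasi-inverses to $\Phi$ provided by Lemmas \ref{lem:quasi-inverse} and \ref{lem:ffcase}. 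The key remaining step, and the \emph{main obstacle}, is to produce a Maurer--Cartan element $\delta_C\in Z^1 M(C_\aa,C_\bb)$ assembled from $\delta_X,\delta_Y,g_M$ and the chosen quasi-isomorphisms such that $(C_\aa,C_\bb,\delta_C)\in\cc$ is quasi-isomorphic to $C(g)$ in $\Tw_{\le 1}\cc$. The object $C(g)$ naturally carries the structure of a ``$2\times 2$ block'' with underlying pair $(\Sigma A\oplus A',\Sigma B\oplus B')$ connected by a Maurer--Cartan element $\tilde\delta$ built from $\delta_X,\delta_Y$ and $\sigma^{-1}g_M$, whose Maurer--Cartan equation follows from that for $\delta_{C(g)}$. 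Transporting $\tilde\delta$ along the chosen quasi-isomorphisms to $C_\aa,C_\bb$ requires solving a cohomological equation in $M$-coefficients analogous to the inductive step in the proof of Lemma \ref{lem:quasi-inverse}; this is precisely where the bound $n\ge 13$ is needed, ensuring enough higher operations on $\cc$ are available.

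The remaining claims are then easy. The inclusions $\aa\hookrightarrow\cc\hookleftarrow\bb$ are strict fully faithful $A_{n-1}$-functors, so by Theorem \ref{bla} (using $n-1\ge 5$) the induced functors $\varphi_\aa,\varphi_\bb$ preserve distinguished triangles. For the semi-orthogonal decomposition, assume $n\ge 14$ so that $H^\ast(\cc)$ is triangulated by Theorem \ref{th:mainth} and Lemma \ref{lem:Aness-gluing}. Semi-orthogonality $H^\ast(\cc)(\varphi_\bb B,\varphi_\aa A)=0$ is built into the definition of the arrow category, since $(\aa\xrightarrow{M}\bb)(B,A)=0$. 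For generation, every object $(A,B,\delta)\in\cc$ is, by Definition \ref{def:cone}, the cone in $\Tw_{\le 1}(\aa\xrightarrow{M}\bb)$ of the degree-zero morphism $\Sigma^{-1}A\to B$ corresponding to $\delta$, yielding a standard distinguished triangle $\Sigma^{-1}A\to B\to (A,B,\delta)\to A$; applying a homotopy inverse to $\Phi$ transfers this to a distinguished triangle in $H^\ast(\cc)$ exhibiting $(A,B,\delta)$ as an extension of $\varphi_\aa(A)$ by $\varphi_\bb(B)$. Finally, the identification of the associated bimodule with $H^\ast(M)$ is immediate from $\cc(A,B)=M(A,B)$.
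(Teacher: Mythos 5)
Your argument for the pre-triangulation of $\cc := \aa\coprod_M\bb$ has a genuine gap, and you in fact acknowledge it yourself when you call the transport of the Maurer--Cartan element the ``main obstacle.'' Having chosen $C_\aa\simeq C(g_\aa)$ in $\Tw_{\le 1}\aa$ and $C_\bb\simeq C(g_\bb)$ in $\Tw_{\le 1}\bb$, you need a class $\delta_C\in Z^1 M(C_\aa,C_\bb)$ making $(C_\aa\oplus C_\bb,\delta_C)$ quasi-isomorphic to $C(g)$ in $\Tw_{\le 1}\cc$, and you gesture at ``solving a cohomological equation in $M$-coefficients analogous to the inductive step of Lemma \ref{lem:quasi-inverse}.'' This is not a proof: transporting a Maurer--Cartan element across a zigzag of quasi-isomorphisms is exactly the kind of obstruction-theoretic construction that is delicate in the truncated $A_n$-setting, and it is precisely where the boundedness of the available higher operations bites. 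You would need to show the obstructions vanish within the range permitted by $n\ge 13$, and you do not attempt to do so. The paper avoids this entirely by a structural argument: Corollary \ref{cor:phiIphiast} gives a \emph{fully faithful} $A_m$-functor $\Tw_{\le 1}\cc \to \Tw_{\le 1}\aa\coprod_{\Tw_{\le 1}M}\Tw_{\le 1}\bb$ whose restriction to $\cc$ is $(\Phi,I,\Phi^\ast)$; Corollary \ref{cor:quasi} together with Lemma \ref{lem:phiast} show that $(\Phi,I,\Phi^\ast)$ is a quasi-equivalence (because $\aa,\bb$ are pre-triangulated); and then the fact that both arrows in $\cc\to\Tw_{\le 1}\cc\to\Tw_{\le 1}\aa\coprod_{\Tw_{\le 1}M}\Tw_{\le 1}\bb$ are fully faithful forces the first one to be a quasi-equivalence. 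This ``two-out-of-three'' step is what replaces the object-level Maurer--Cartan transport you were attempting, and it is the key idea you are missing.

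The rest of your argument is essentially the paper's. The exactness of $\varphi_\aa,\varphi_\bb$ does follow from Theorem \ref{bla}; and your description of the generation step for the semi-orthogonal decomposition (exhibiting $(A\oplus B,\delta)$ as $C(\sigma\delta)$ for $\sigma\delta:\Sigma^{-1}A\to B$, then pulling the standard triangle back to $H^\ast(\cc)$) matches the paper's, except that you should be careful that the triangle you write down a priori lives in $H^\ast(\Free\aa\coprod_{\Free M}\Free\bb)$ (since $\Sigma^{-1}A\notin\Ob(\aa)$), and one must explicitly replace $\Sigma^{-1}A$ by an actual desuspension $A'\in\Ob(\aa)$ before the triangle exhibits the decomposition inside $H^\ast(\cc)$; the paper does this and you gloss over it.
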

The proof of this theorem requires some preparation. We start with:
\begin{proposition}[Functoriality of gluing] \label{prop:gluing} Assume we have $A_n$-categories
  $\aa$, $\bb$, $\aa'$, $\bb'$ and $\bb$-$\aa$ and $\bb'-\aa'$
  bimodules $M$ and $M'$. Then for $m\le n$, there is a strict $A_{n-m}$-functor
\[
\textstyle \phi:A_m^\circ(\aa\xrightarrow{M}\bb,\aa'\xrightarrow{M'}\bb')\r A_{m-1}(\aa\coprod_M\bb,\aa'\coprod_{M'}\bb').
\]
Moreover $\phi$ is strictly compatible with compositions.
\end{proposition}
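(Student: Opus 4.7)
The plan is to bootstrap off of Lemma \ref{lem:functoriality of *}: I would identify $\aa\coprod_M\bb$ as the full subgraph of $(\aa\xrightarrow{M}\bb)^{\ast 2}$ consisting of objects $(A\oplus B,\delta)$ with $A\in\Ob(\aa)$, $B\in\Ob(\bb)$ and $\delta\in Z^1 M(A,B)$ (as in the definition), and then verify that applying $\ast^2$ to an $F=(F_\aa,F_M,F_\bb)\in A_m^\circ(\aa\xrightarrow{M}\bb,\aa'\xrightarrow{M'}\bb')$ preserves this subgraph.

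On objects, the formula from the proof of Lemma \ref{lem:functoriality of *} specializes to
\[
\phi(F)(A\oplus B,\delta)=\bigl(F_\aa(A)\oplus F_\bb(B),\, F_M(s\delta)\bigr),
\]
since all terms $F_k(s\delta,\dots,s\delta)$ with $k\ge 2$ vanish: $\delta\in M(A,B)$ cannot be composed with itself in the arrow category. On morphisms, I would take the Taylor coefficients of $\phi(F)$ to be the sums analogous to those in Lemma \ref{lem:functoriality of *}, with the $\delta$'s of the source/target objects inserted between the arrows. The critical observation is that because each $\delta_i$ lies in $M$ and because composable paths in the arrow category respect its directed structure (no arrow from $\Ob(\bb)$ back to $\Ob(\aa)$), at most one ``$M$-typed'' element (either a single $\delta_i$ or a single $M$-component of one of the $g_j$) can appear in any nonzero composable string. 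This truncates the formulas dramatically.

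For the action on higher $\Hom$-spaces of $A_m^\circ$ I proceed identically, using the same type of formula as in Lemma \ref{lem:functoriality of *}. Since $\phi$ is to be strict, all Taylor coefficients $\phi_k$ for $k\ge 2$ vanish, so the $A_{n-m}$-functor property reduces to checking that $\phi\circ b=b\circ\phi$ on chains of length $\le n-m$ in the source. Using the above ``single $M$-crossing'' truncation, this identity expands into one already guaranteed by the fact that $A_m(\aa\xrightarrow{M}\bb,\aa'\xrightarrow{M'}\bb')$ is an $A_{n-m}$-category by Lemma \ref{lem:An-m} and that $\aa\coprod_M\bb$ is an $A_{n-1}$-category by Lemma \ref{lem:Aness-gluing}. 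Strict compatibility of $\phi$ with composition of $A_m$-functors then follows from the corresponding property in Lemma \ref{lem:functoriality of *}.

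The main obstacle is the bookkeeping around arity bounds: one has to verify that the $A_{n-m}$ axioms really do reduce to identities with at most $n$ arguments in the source category. The single-$M$-crossing observation does the work, uniformly bounding the number of $\delta$-insertions in every expansion, and improving the naive bound $A_{\lfloor (n-1)/2\rfloor-\lfloor(m-1)/2\rfloor}$ from a direct invocation of Lemma \ref{lem:functoriality of *} to the stronger $A_{n-m}$-functor we need.
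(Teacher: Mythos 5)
Your proposal is correct and takes essentially the same route as the paper: the paper's proof also reduces to Lemma~\ref{lem:functoriality of *} (with $t=2$) and then observes, exactly as you do, that at most one $\delta$ can appear in each argument list of \eqref{eq:arguments} because $\delta$ lands in $M$ and morphisms in the arrow category $\aa\xrightarrow{M}\bb$ cross from $\Ob(\aa)$ to $\Ob(\bb)$ at most once. Your single-$M$-crossing bookkeeping is precisely how the naive bound $A_{\lfloor (n-1)/2\rfloor-\lfloor(m-1)/2\rfloor}$ is upgraded to $A_{n-m}$ (and the target to $A_{m-1}$), matching the paper's argument.
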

\begin{proof} This is proved like Lemma \ref{lem:functoriality of *} which also gives the relevant formulas (where we take into account that in this case at most one $\delta$
can appear in the relevant arguments lists in \eqref{eq:arguments}).
\end{proof}
\begin{corollary} \label{cor:quasi}
Let $3\le m\le n-3$ and let $\aa$, $\bb$, $\aa'$, $\bb'$, $M$, $M'$ be as in Proposition \ref{prop:gluing} and let $F\in A_m^\circ(\aa\xrightarrow{M}\bb,\aa'\xrightarrow{M'}\bb')$.  
If $F$ is a quasi-equivalence then so is $\phi(F)$.
\end{corollary}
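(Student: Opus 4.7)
The plan is to verify directly that $\phi(F)$ satisfies the two defining conditions of a quasi-equivalence (Definition \ref{def:auxilliary}), namely that $H^\ast(\phi(F))$ is essentially surjective and fully faithful on Hom-spaces. Note that $\phi(F)$ is an $A_{m-1}$-functor between $A_{n-1}$-categories by Proposition \ref{prop:gluing}, and the assumption $3\le m\le n-3$ ensures $m-1\ge 2$ and $n-1\ge 3$, so the notion of quasi-equivalence makes sense.

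First I would unpack the hypothesis on $F=(F_\aa, F_M, F_\bb)$. Since the arrow category $\aa'\xrightarrow{M'}\bb'$ admits no morphisms from $\bb'$-objects to $\aa'$-objects, any isomorphism in $H^\ast(\aa'\xrightarrow{M'}\bb')$ between an $\aa'$-object and a $\bb'$-object would be trivial. Consequently the essential surjectivity of $H^\ast(F)$ forces both $F_\aa:\aa\r\aa'$ and $F_\bb:\bb\r\bb'$ to be quasi-equivalences. Fully faithfulness of $H^\ast(F)$ then additionally gives that $H^\ast(F_M):H^\ast(M(A,B))\r H^\ast(M'(F_\aa A,F_\bb B))$ is an isomorphism for all $A\in\Ob(\aa)$, $B\in \Ob(\bb)$.

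Next, for essential surjectivity of $H^\ast(\phi(F))$: given $(A'\oplus B',\delta')\in \aa'\coprod_{M'}\bb'$, pick $A\in\Ob(\aa)$, $B\in\Ob(\bb)$ and closed morphisms $\alpha\in\aa'(F_\aa A,A')_0$, $\beta\in\bb'(F_\bb B,B')_0$ that descend to isomorphisms in cohomology. Consider the transported class $\beta^{-1}\cdot \delta'\cdot \alpha\in H^1 M'(F_\aa A, F_\bb B)$ and use the isomorphism $H^\ast(F_M)$ to lift it to a class $[\delta]\in H^1 M(A,B)$ represented by a closed element $\delta$; this defines an object $(A\oplus B,\delta)\in \aa\coprod_M\bb$. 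The difference $F_M(\delta)-\beta^{-1}\delta'\alpha$ is a boundary $d\mu$ in $M'(F_\aa A,F_\bb B)$, and assembling $\alpha$, $\beta$, $\mu$ into a lower-triangular matrix produces a closed morphism in $\aa'\coprod_{M'}\bb'$ between $\phi(F)(A\oplus B,\delta)=(F_\aa A\oplus F_\bb B, F_M(\delta))$ and $(A'\oplus B',\delta')$ that is an isomorphism in $H^0$.

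For fully faithfulness, the key observation is that for $X_i=(A_i\oplus B_i,\delta_i)\in \aa\coprod_M\bb$ ($i=0,1$) there is a natural short exact sequence of complexes of $k$-vector spaces
\begin{equation*}
\textstyle 0\r M(A_0,B_1)\r (\aa\coprod\nolimits_M\bb)(X_0,X_1)\r \aa(A_0,A_1)\oplus\bb(B_0,B_1)\r 0,
\end{equation*}
where the inclusion places an element in the strictly lower-triangular slot and the projection kills it. Exactness on the nose is clear; it is a map of complexes because any Maurer-Cartan twisting term appearing in the differential lands in the lower-triangular $M$-component (the would-be corrections on the diagonal would require compositions $M\cdot M$, which vanish in the arrow category). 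The $A_m$-functor $F$, viewed componentwise, takes this short exact sequence to the analogous sequence for $X'_i:=\phi(F)(X_i)$. Passing to cohomology yields a commutative ladder of long exact sequences in which the maps on the outer terms are induced by $F_\aa$, $F_\bb$, $F_M$ (between twisted complexes for $\delta_i$ and $F_M(\delta_i)$); all three are isomorphisms by Step~1, so the five lemma gives that $H^\ast(\phi(F))$ induces an isomorphism on Hom-spaces.

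The main obstacle is Step~2: the essential-surjectivity construction requires assembling the closed-in-cohomology data $(\alpha,\beta,\mu)$ into an actual closed morphism whose matrix entries are compatible with the Maurer-Cartan twists $\delta,\delta',F_M(\delta)$. Fortunately only the $b_1$ and $b_2$ structures are involved at the level of checking closedness, so the sign bookkeeping is finite and straightforward given the conventions in Lemma \ref{higher operations}. The full-faithfulness step is then purely homological once the short exact sequence is in hand.
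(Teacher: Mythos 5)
Your proof is correct, but it takes a genuinely different route from the paper. The paper proceeds ``top-down'': it invokes Lemma~\ref{lem:quasi-inverse} to build a quasi-inverse $G$ to $F$ (using the clause in that lemma that $H^\ast(G_1)$ may be chosen freely, so that $G$ can be taken in $A^\circ_{m-1}$, i.e.\ respecting the $\aa/\bb$ decomposition), and then applies the functoriality of $\phi$ together with Lemma~\ref{lem:invertible} to transport the homotopy-inverse relation $fg\simeq\Id$ through $\phi$. That argument is short and conceptual, but leans on the nontrivial content of Lemma~\ref{lem:quasi-inverse}. You instead argue ``bottom-up'': you observe the filtration-preserving short exact sequence of Hom-complexes
\begin{equation*}
\textstyle 0\r M(A_0,B_1)\r \big(\aa\coprod\nolimits_M\bb\big)(X_0,X_1)\r \aa(A_0,A_1)\oplus\bb(B_0,B_1)\r 0
\end{equation*}
(valid precisely because a Maurer--Cartan twist stays in the strictly lower-triangular $M$-slot and $M$-$M$ compositions vanish in the arrow category), note that $\phi(F)_1$ respects this filtration with outer graded pieces $F_{M,1}$ and $F_{\aa,1}\oplus F_{\bb,1}$, and conclude fully faithfulness by the five lemma; essential surjectivity you get by transporting the Maurer--Cartan class of a target object along cohomology-level inverses of $\alpha,\beta$ and using $H^\ast(F_M)$ to lift it. This is more explicit and more elementary (no appeal to the inductive obstruction argument behind Lemma~\ref{lem:quasi-inverse}), at the price of a bit more bookkeeping; it also has the virtue of directly producing the comparison isomorphism in the essential-surjectivity step. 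Two small imprecisions worth tightening: $\beta^{-1}\delta'\alpha$ only makes sense as a class in $H^1 M'(F_\aa A,F_\bb B)$ (not at the chain level), and the correction $\mu$ you place in the lower-left slot of the comparison matrix lives in $M'(F_\aa A,B')$ and satisfies $d\mu=\pm(\beta\, F_M(\delta)-\delta'\alpha)$, rather than in $M'(F_\aa A,F_\bb B)$ as written; and the reduction ``$F$ quasi-equivalence $\Rightarrow$ $F_\aa,F_\bb$ quasi-equivalences and $F_M$ quasi-iso'' has a degenerate corner case (objects with $\Hom(X,X)=0$), though this is a shared elision with the paper and not a flaw specific to your argument.
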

\begin{proof}
Note that $F$ is a quasi-equivalence if and only if $F_\aa$, $F_\bb$ are quasi-equivalences and $F_M$ is a quasi-isomorphism.
By Lemma \ref{lem:quasi-inverse} we may choose an inverse $G\in A_{m-1}^\circ(\aa'\xrightarrow{M'}\bb',\aa\xrightarrow{M}\bb)$ to $F$, up to homotopy (making use of the fact that the quasi-inverse to $H^\ast(F_1)$ may be
chosen freely). Note that $H^\ast(G)$ is a functor as $m-1\ge 2$.

Since $H^\ast(\phi)$ also being a functor (as $n-m\ge 3$) preserves invertible maps, we conclude by Lemma \ref{lem:invertible}
that it preserves homototopies. Hence $\phi(G)$ is an inverse to
$\phi(F)$ up to homotopy. It follows that $H^\ast(\phi(F))$ is an
equivalence $H^\ast(\aa\coprod_M\bb)\r H^\ast(\aa'\coprod_{M'} \bb')$.
\end{proof}
For the next few results we assume that $\aa$, $\bb$ are $A_n$ categories and that $M$ is an $A_n$-$\bb$-$\aa$-bimodule.
We define $M^{\ast 2}$ as the $\bb{\ast} \bb$-$\aa{\ast} \aa$
bimodule such that $M^{\ast 2}((A_0\oplus A_1,\delta_A),(B_0\oplus B_1,\delta_B))=M(A_0,B_0)\oplus M(A_0,B_1)\oplus M(A_1,B_0)\oplus M(A_1,B_1)$ where the higher operations
on $M^{\ast 2}$ are obtained from those of $M$  by ``inserting Maurer-Cartan elements'' like in Lemma \ref{higher operations}.  
In a similar way as Lemma \ref{lem:functoriality of *} one proves
\begin{lemma} \label{lem:Aness:star:bimod}
$M^{\ast 2}$ is a $A_{\lfloor (n-1)/2\rfloor}$-bimodule.
\end{lemma}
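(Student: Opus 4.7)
The plan is to mirror the proof of Lemma \ref{higher operations} (the higher cone lemma) essentially verbatim, adapted to bimodules. Concretely, I would first write down the higher operations on $M^{\ast 2}$ explicitly: given composable arrows
\[
(A_p,\delta_{A_p})\xrightarrow{f_p}\cdots\xrightarrow{f_1}(A_0,\delta_{A_0}),\qquad (B_0,\delta_{B_0})\xrightarrow{g_1}\cdots\xrightarrow{g_q}(B_q,\delta_{B_q})
\]
in $\aa^{\ast 2}$ and $\bb^{\ast 2}$ and an element $m\in M^{\ast 2}((A_0,\delta_{A_0}),(B_0,\delta_{B_0}))$ (which is literally an element of the underlying $M$), define $b_{M^{\ast 2},i}$ (with $i=p+q+1$) by a sum over all ways of inserting powers of the Maurer--Cartan elements $s\delta_{A_j}$ and $s\delta_{B_k}$ into the gaps between the $sf_s$, $sm$, $sg_t$, applying the underlying operation $b_M$ of $M$ (extended linearly to $M^{\oplus 2\times 2}$ in the obvious matrix manner), and discarding contributions whose total length exceeds $n$.

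The second step is the arity count. Because each Maurer--Cartan element is strictly lower triangular in a $2\times 2$ matrix format, we have $\delta^2=0$ as a matrix product, so each of the $i+1$ gaps in the expansion admits at most one inserted $\delta$. Consequently the total number of arguments fed into the ambient $b_M$ is bounded by $i+(i+1)=2i+1$. Demanding $2i+1\le n$ gives exactly $i\le\lfloor (n-1)/2\rfloor$, so the operations are well-defined up to that arity; this is the same counting that produces the bound in Lemma \ref{higher operations} for $m=2$.

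The third step is to verify the bimodule codifferential identity $b\circ b=0$ on sequences with at most $\lfloor (n-1)/2\rfloor$ total arguments. Following the argument in Lemma \ref{higher operations}, the expansion of $(b_{M^{\ast 2}}\circ b_{M^{\ast 2}})_i$ is a sum of multilinear expressions of the underlying $b_M$ (and the $b_\aa,b_\bb$ that act through $\delta_A,\delta_B$) evaluated on argument lists of length at most $2i+1\le n$. On such lists the identity $b_M\circ b_M=0$ holds because $M$ is an $A_n$-bimodule; the extra terms involving consecutive $\delta_A$'s or $\delta_B$'s assemble into the Maurer--Cartan equations \eqref{eq:mcequation} for the objects of $\aa^{\ast 2}$ and $\bb^{\ast 2}$ and therefore vanish. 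Vanishing on unit insertions is inherited directly from $M$. I do not expect any genuine obstacle here: the one bookkeeping subtlety to watch is the sign, which is handled uniformly via the Koszul convention exactly as in the proofs of Lemmas \ref{higher operations} and \ref{lem:functoriality of *}, so this step is really ``the same argument as Lemma \ref{higher operations}, with one slot reserved for $m$ instead of an arrow.''
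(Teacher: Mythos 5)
Your proof is correct and takes essentially the same approach as the paper, which simply notes the result follows "in a similar way as Lemma \ref{lem:functoriality of *}" (which itself defers to the counting argument of Lemma \ref{higher operations}). Your arity count ($p+1$ Maurer--Cartan slots around $p$ arguments, each slot admitting at most one $\delta$ since two consecutive strictly lower triangular $2\times2$ insertions kill the matrix-extended operation, giving $2p+1\le n$ and hence $p\le\lfloor(n-1)/2\rfloor$) is exactly the intended computation.
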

\begin{lemma}
\label{lem:star}
Let $n\ge 3$ and let $\aa$, $\bb$, $M$ be as above. We have a fully faithful functor of  $A_{\lfloor (n-1)/2\rfloor-1}$-categories 
\begin{equation}
\label{eq:centraliso}
\textstyle (\aa\coprod_M\bb){\ast}(\aa\coprod_M\bb)\r \aa{\ast}\aa\coprod_{M^{\ast 2}} \bb{\ast}\bb
\end{equation}
\end{lemma}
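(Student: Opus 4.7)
My plan is to define the functor strictly at the graded-graph level by a tautological rearrangement of the data underlying the two sides, and then check that the higher multiplications transport correctly. The key observation enabling the whole construction is that in the arrow category $\aa\xrightarrow{M}\bb$ one has $(\aa\xrightarrow{M}\bb)(B,A)=0$ for $A\in\Ob(\aa)$, $B\in\Ob(\bb)$, so every relevant $\Hom$-space decomposes canonically into $\aa$-, $\bb$- and $M$-components, and every higher operation on $\aa\xrightarrow{M}\bb$ takes \emph{at most one} $M$-argument. First I would verify that the arity bound $p:=\lfloor(n-1)/2\rfloor-1$ is consistent: the source $(\aa\coprod_M\bb){\ast}(\aa\coprod_M\bb)$ is $A_{\lfloor(n-2)/2\rfloor}$ by Lemmas~\ref{lem:Aness-gluing} and \ref{higher operations}, while the target is $A_{\lfloor(n-1)/2\rfloor-1}$ by Lemmas~\ref{lem:Aness:star:bimod} and \ref{lem:Aness-gluing}, and both are $\ge p$.

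On objects, an element of the left-hand side is $X=(X_0\oplus X_1,\delta_X)$ with $X_i=(A_i\oplus B_i,\delta_i)$ a glued object (so $\delta_i\in Z^1 M(A_i,B_i)$) and $\delta_X\in (\aa\coprod_M\bb)(X_0,X_1)_1$ uniquely written as $\delta_A+\delta_B+\delta_M$ with $\delta_A\in\aa(A_0,A_1)_1$, $\delta_B\in\bb(B_0,B_1)_1$, $\delta_M\in M(A_0,B_1)_1$. I would send $X$ to the triple $\bigl((A_0\oplus A_1,\delta_A),(B_0\oplus B_1,\delta_B),\tilde\delta\bigr)$ where $\tilde\delta\in M^{\ast 2}(A,B)$ has components $\delta_0,\delta_M,0,\delta_1$ in $M(A_0,B_0),M(A_0,B_1),M(A_1,B_0),M(A_1,B_1)$ respectively. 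To see this is a well-defined object of the target, I expand the full Maurer-Cartan equation $\sum_i b_i(s\delta_X,\ldots,s\delta_X)=0$ in $(\aa\xrightarrow{M}\bb)^{\ast 2}$ (which after substituting the inner Maurer-Cartan elements $\delta_0,\delta_1$ involves all five pieces $\delta_0,\delta_1,\delta_A,\delta_B,\delta_M$). Using the "at most one $M$-argument" rule, this equation cleanly splits into: the pure-$\aa$ Maurer-Cartan equation for $\delta_A$, the pure-$\bb$ equation for $\delta_B$, and a mixed equation which, via the formulas of Lemma~\ref{lem:Aness:star:bimod}, is precisely the closedness of $\tilde\delta$ inside $M^{\ast 2}(A,B)$.

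On hom-spaces, the decomposition $(\aa\coprod_M\bb)^{\oplus 2}(X,X')=\aa^{\oplus 2}(A,A')\oplus\bb^{\oplus 2}(B,B')\oplus M^{\oplus 2}(A,B')$ furnishes a canonical $k$-linear isomorphism onto the hom-space of the target. I would declare the functor to be strict, i.e.\ take all higher Taylor coefficients to vanish; this guarantees fully faithfulness as soon as compatibility with $b_1,\ldots,b_p$ is established. For that compatibility, both sides' operations are governed by the same universal recipe \eqref{MC multiplication} of inserting Maurer-Cartan elements into ambient operations: on the left, $\delta_0,\delta_1,\delta_X$ are inserted into operations of $(\aa\xrightarrow{M}\bb)^{\oplus 2}$; on the right, $\delta_A,\delta_B$ are inserted into operations of $\aa,\bb$ (producing the inner $\ast 2$-operations), and $\tilde\delta,\delta_A,\delta_B$ are inserted into $M$-operations (producing the operations on $M^{\ast 2}$ which then drive the outer gluing). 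Substituting $\delta_X=\delta_A+\delta_B+\delta_M$ on the left and expanding multilinearly, each resulting term matches exactly one term on the right by the one-$M$-argument rule.

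The main obstacle is precisely the bookkeeping in this last step: one must match a sum of terms with two layers of Maurer-Cartan insertions on the left against one with two layers (inner $\ast 2$ and outer gluing) on the right. Fortunately, the rigidity imposed by the arrow-category constraint forces a unique match term-by-term, and the signs agree by the uniform Koszul convention, so no genuinely new identity is invoked beyond what already holds in $\aa,\bb,M$. The arity bound ensures every expanded multilinear expression has length $\le n$, so the defining identities of $\aa,\bb$ and the bimodule structure on $M$ are all available.
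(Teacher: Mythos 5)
Your proof is correct and follows essentially the same route as the paper: the paper phrases the object map as a permutation of a $4\times 4$ Maurer--Cartan matrix $\Delta\mapsto\Delta^\ast$ acting on $(A_0,B_0,A_1,B_1)^T\mapsto(A_0,A_1,B_0,B_1)^T$, which is exactly your canonical decomposition of $\delta_X$ into $\aa$-, $\bb$- and $M$-components together with the inner $\delta_i$'s. Your explicit invocation of the ``at most one $M$-argument'' principle to justify term-by-term matching of higher operations is a useful elaboration of what the paper dismisses as ``an easy verification (but messy to write down),'' but it is not a different approach.
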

\begin{proof} An object in $(\aa\coprod_M\bb){\ast}(\aa\coprod_M\bb)$ is of the form
\[
((A_0\oplus B_0,\delta_0)\oplus (A_1\oplus B_1,\delta_1),\delta)
\]
where  $\delta=(\delta_{00},\delta_{10},\delta_{11})\in \aa(A_0,A_1)_1\oplus M(A_0,B_1)_1\oplus \bb(B_0,B_1)_1$ is such that
\[
\delta=
\begin{pmatrix}
0&0&0&0\\
0&0&0&0\\
\delta_{00}&0&0&0\\
\delta_{01}&\delta_{11}&0&0
\end{pmatrix}
\text{ acting on }
\begin{pmatrix}
A_0\\B_0\\A_1\\B_1
\end{pmatrix}
\]
is a Maurer-Cartan element in $(\aa\coprod_M\bb)^{\oplus 2}$.
One verifies that the following matrix
\[
\Delta=\begin{pmatrix}
0&0&0&0\\
\delta_0&0&0&0\\
\delta_{00}&0&0&0\\
\delta_{01}&\delta_{11}&\delta_1&0
\end{pmatrix}
\text{ acting on }
\begin{pmatrix}
A_0\\B_0\\A_1\\B_1
\end{pmatrix}
\]
defines a Maurer-Cartan element in $(\aa\xrightarrow{M}\bb)^{\oplus 4}$. Rearranging $\Delta$
we get a different Maurer-Cartan element in $(\aa\xrightarrow{M}\bb)^{\oplus 4}$
\[
\Delta^\ast
=\begin{pmatrix}
0&0&0&0\\
\delta_{00}&0&0&0\\
\delta_{0}&0&0&0\\
\delta_{01}&\delta_{1}&\delta_{11}&0
\end{pmatrix}
\text{ acting on }
\begin{pmatrix}
A_0\\A_1\\B_0\\B_1
\end{pmatrix}
\]
which is a block-matrix representation for an object in $(\aa\ast\aa)\coprod_{M^{\ast 2}} (\bb\ast\bb)$.
This construction defines and injection $\Ob((\aa\coprod_M\bb){\ast}(\aa\coprod_M\bb))\hookrightarrow \Ob( \aa{\ast}\aa\coprod_{M^{\ast 2}} \bb{\ast}\bb)$ (but not a bijection) which is is compatible with $\Hom$-sets. It is now
an easy verification (but messy to write down) that we also get compatibility with higher operations.
\end{proof}
The bimodule $M$ may be extended to a $\Free(\bb)$-$\Free(\aa)$-$A_n$-bimodule which we denote by $\Free(M)$. 
\begin{lemma} \label{lem:free}
We have a fully faithful functor of $A_{n-1}$-categories
\[
\textstyle \Free(\aa\coprod_M\bb)\r \Free(\aa)\coprod_{\Free(M)}\Free(\bb).
\]
\end{lemma}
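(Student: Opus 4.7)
I would construct a strict functor $\Psi \colon \Free(\aa\coprod_M\bb)\r \Free(\aa)\coprod_{\Free(M)}\Free(\bb)$ by setting, on objects,
\[
\bigoplus_{i\in I}\Sigma^{a_i}(A_i\oplus B_i,\delta_i)\ \longmapsto\ \left(\bigoplus_{i\in I}\Sigma^{a_i}A_i\oplus \bigoplus_{i\in I}\Sigma^{a_i}B_i,\ \Delta\right),
\]
where $\Delta\in \Free(M)(\bigoplus_i\Sigma^{a_i}A_i,\bigoplus_i\Sigma^{a_i}B_i)_1$ is the ``block-diagonal'' element whose $(i,i)$-entry is $\sigma^{0}\delta_i$ and whose off-diagonal entries vanish. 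On Hom-spaces I would take $\Psi_1$ to be the canonical identification
\[
\bigoplus_{i,j}\Sigma^{b_j-a_i}\bigl(\aa(A_i,A'_j)\oplus M(A_i,B'_j)\oplus\bb(B_i,B'_j)\bigr),
\]
obtained by distributing the direct sums; fully faithfulness of $\Psi$ is immediate from this.

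First I would check that $\Delta$ is a Maurer--Cartan element in $(\Free(\aa)\xrightarrow{\Free(M)}\Free(\bb))^{\oplus 2}$. Since there are no morphisms from $\Free(\bb)$ to $\Free(\aa)$, any iterated product $b_k(s\Delta,\ldots,s\Delta)$ vanishes for $k\ge 2$, so the MC equation reduces to $m_1(\Delta)=0$, which holds blockwise because each $\delta_i\in Z^1 M(A_i,B_i)$. Thus $\Psi$ is well defined on objects.

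The main content is to check that $\Psi$ is compatible with the higher operations, i.e.\ is a strict $A_{n-1}$-functor. By the formula \eqref{MC multiplication} used in the definitions of $\aa\coprod_M\bb$ and $\Free(\aa)\coprod_{\Free(M)}\Free(\bb)$, a higher multiplication on either side is computed by inserting its own MC element into the ambient $A_n$-operations of the arrow category any number of times between the input arrows. On the target side, since $\Delta$ is block-diagonal with blocks $\sigma^0\delta_i$, each insertion of $s\Delta$ expands linearly as a sum over choices of which block is traversed. Matching each of these summands with the corresponding term obtained by first applying \eqref{MC multiplication} inside $\aa\coprod_M\bb$ and then extending via the Koszul rule of Definition \ref{def:free}, the two sides coincide term by term. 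This is a bookkeeping argument essentially identical to the one used in Lemma \ref{higher operations} and Lemma \ref{lem:functoriality of *}.

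The main obstacle I anticipate is the sign tracking: the shifts $\sigma^{a_i}$ introduce Koszul signs (via the rule $s\sigma=-\sigma s$) both when extending $\aa\coprod_M\bb$-operations to $\Free(\aa\coprod_M\bb)$ and when interpreting $\Delta$ as a block-diagonal morphism between objects of $\Free$. I expect these two sources of signs to agree by the coherence of the Koszul convention, but verifying this cleanly is where most of the bookkeeping effort goes; once this is in place, both sides are linear expansions of the same multilinear expression, and compatibility follows.
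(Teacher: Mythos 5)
Your proposal is correct and follows the same route as the paper: the functor is defined on objects exactly as in the paper's proof (block-diagonal Maurer--Cartan element $\oplus_i\delta_i$), and on morphisms by the canonical identification from distributing direct sums. The paper states the compatibility with higher operations and fully faithfulness as ``easy to see,'' whereas you spell out the Maurer--Cartan check (reduction to $m_1(\Delta)=0$ by lower-triangularity) and flag the Koszul sign bookkeeping as the real verification, which is a reasonable elaboration of the same argument.
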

\begin{proof} An object in $\Free(\aa\coprod_M\bb)$ is of the form $\bigoplus_{i\in I} \Sigma^{a_i}(A_i\oplus B_i,\delta_i)$. We send it to
$(\bigoplus_i \Sigma^{a_i}A_i\oplus \bigoplus_i \Sigma^{a_i} B_i,\oplus_i \delta_i)$. It is easy to see that this operation is fully faithful.
\end{proof}
Now we put $\Tw_{\le 1}M=(\Free M)^{\ast 2}$. From Lemma \ref{lem:Aness:star:bimod} we obtain
\begin{lemma} \label{lem:Aness:Tw:bimod}
$\Tw_{\le 1}M$ is a $A_{\lfloor (n-1)/2\rfloor}$-bimodule.
\end{lemma}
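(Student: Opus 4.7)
The plan is to reduce this lemma to Lemma \ref{lem:Aness:star:bimod} applied to the bimodule $\Free(M)$. There are two ingredients to put in place: first, that the Free construction for bimodules extends the $A_n$-bimodule structure on $M$ to an $A_n$-$\Free(\bb)$-$\Free(\aa)$-bimodule structure on $\Free(M)$; second, that applying the $\ast 2$ construction then gives the desired result.

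First I would make the extension of $M$ to $\Free(M)$ explicit. On objects, $\Free(M)(\bigoplus_i \Sigma^{a_i} A_i, \bigoplus_j \Sigma^{b_j} B_j) := \bigoplus_{i,j} \Sigma^{b_j - a_i} M(A_i, B_j)$, mirroring the definition of $\Free(\aa)$ in Definition \ref{def:free}. The higher operations are obtained by extending \eqref{eq:higher} linearly across direct sums and (using the Koszul sign rule exactly as in the $\Free(\aa)$ construction) across shifts. The identities $b \circ b = 0$, vanishing on units, and the $(B^+\bb \otimes M \otimes B^+\aa)_{\le n+1}$-comodule axioms all hold term-by-term on the formal basis coming from $\Ob(\aa)$, $\Ob(\bb)$, and shifts, so they transfer from $M$ to $\Free(M)$ with no loss of arity. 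This shows that $\Free(M)$ is an $A_n$-$\Free(\bb)$-$\Free(\aa)$-bimodule.

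Next I would invoke Lemma \ref{lem:Aness:star:bimod} applied to the $A_n$-data $(\Free(\aa), \Free(\bb), \Free(M))$ in place of $(\aa, \bb, M)$. This gives that $\Free(M)^{\ast 2}$ is an $A_{\lfloor (n-1)/2 \rfloor}$-bimodule over $\Free(\aa)^{\ast 2}$ and $\Free(\bb)^{\ast 2}$. By Definition \ref{def:truncated} this is exactly the assertion that $\Tw_{\le 1} M = (\Free M)^{\ast 2}$ is an $A_{\lfloor (n-1)/2 \rfloor}$-$\Tw_{\le 1}\bb$-$\Tw_{\le 1}\aa$-bimodule.

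The only potential obstacle is bookkeeping: one must confirm that the definition of $M^{\ast 2}$ given just before Lemma \ref{lem:Aness:star:bimod} (where Maurer–Cartan elements from both the $\aa$-side and the $\bb$-side are inserted into \eqref{eq:higher}) applies verbatim once $\Free$ has been taken, and that no subtlety about shifts or empty direct sums spoils this. Both points are purely formal and the argument behind Lemma \ref{lem:Aness:star:bimod} (which in turn is a copy of the argument of Lemma \ref{higher operations}, where strict lower triangularity of the $2\times 2$ Maurer–Cartan matrices allows at most one $\delta$ from each side in any arguments list and therefore bounds the length of \eqref{eq:arguments} by $n$ precisely when the arity is $\le \lfloor (n-1)/2 \rfloor$) carries through without modification.
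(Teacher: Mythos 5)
Your proposal is correct and follows the same route as the paper: the paper defines $\Tw_{\le 1}M := (\Free M)^{\ast 2}$, asserts (without spelling it out) that $\Free(M)$ inherits an $A_n$-$\Free(\bb)$-$\Free(\aa)$-bimodule structure, and then cites Lemma \ref{lem:Aness:star:bimod} directly. You simply fill in the details the paper leaves implicit, namely the explicit $\Hom$-space formula for $\Free(M)$ and the observation that the $A_n$-axioms transfer termwise across direct sums and shifts.
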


\begin{corollary} \label{cor:phiIphiast}
Assume $n\ge 3$. There is a fully faithful functor of 
$A_{\lfloor (n-1)/2\rfloor-1}$-categories
\begin{equation}
\label{eq:tw}
\textstyle \Tw_{\le 1}(\aa\coprod_M\bb)\r \Tw_{\le 1} \aa\coprod_{\Tw_{\le 1}M} \Tw_{\le 1}\bb
\end{equation}
whose restriction to $\aa\coprod_M\bb$ is $(\Phi,I,\Phi^\ast)$ where $\Phi:\aa\r \Tw_{\le 1}\aa$ is as in
Definition \ref{def:truncated}, $\Phi^\ast:\bb\r \Tw_{\le 1}\bb$ is the related map $B\mapsto (0\oplus B,0)$ and $I:M\r {}_{\Phi}\Tw_{\le 1}M_{\Phi^\ast}$ is the obvious inclusion.
\end{corollary}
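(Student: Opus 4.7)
The plan is to build the functor as a composition of two fully faithful functors supplied by the preceding lemmas. Unwinding definitions we have
\[
\Tw_{\le 1}(\aa\coprod\nolimits_M\bb)=\Free(\aa\coprod\nolimits_M\bb)^{\ast 2}
\]
while
\[
\Tw_{\le 1}\aa\coprod\nolimits_{\Tw_{\le 1}M}\Tw_{\le 1}\bb=\Free(\aa)^{\ast 2}\coprod\nolimits_{\Free(M)^{\ast 2}}\Free(\bb)^{\ast 2}.
\]
So the natural strategy is to first move $\Free$ past the gluing using Lemma \ref{lem:free}, and then to move $(-)^{\ast 2}$ past the gluing using Lemma \ref{lem:star}.

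Concretely, Lemma \ref{lem:free} supplies a fully faithful $A_{n-1}$-functor
\[
F_1:\Free(\aa\coprod\nolimits_M\bb)\r \Free(\aa)\coprod\nolimits_{\Free(M)}\Free(\bb).
\]
Applying the strict functor $\ast^2$ of Lemma \ref{lem:functoriality of *} (with $t=2$, $m=n-1$) yields an $A_{\lfloor (n-2)/2\rfloor}$-functor $F_1^{\ast 2}$ between the $\ast^2$ categories. Fully faithfulness is inherited since the $\Hom$-spaces in $\ast^2$ agree with those of the underlying $\oplus^2$-graph (see \eqref{eq:coneHoms}) and the leading Taylor coefficient of $F_1^{\ast 2}$ is block-wise given by $(F_1)_1$. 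Next, applying Lemma \ref{lem:star} with $\aa':=\Free(\aa)$, $\bb':=\Free(\bb)$, $M':=\Free(M)$ produces a fully faithful $A_{\lfloor(n-1)/2\rfloor-1}$-functor
\[
F_2:\bigl(\Free(\aa)\coprod\nolimits_{\Free(M)}\Free(\bb)\bigr)^{\ast 2}\r \Free(\aa)^{\ast 2}\coprod\nolimits_{\Free(M)^{\ast 2}}\Free(\bb)^{\ast 2}.
\]
Since $\lfloor(n-2)/2\rfloor \ge \lfloor(n-1)/2\rfloor-1$, the composition $F_2\circ F_1^{\ast 2}$ makes sense and provides the desired fully faithful $A_{\lfloor(n-1)/2\rfloor-1}$-functor.

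It remains to verify the claimed restriction formula on $\aa\coprod_M\bb$. This is a direct block-matrix chase: an object $A\in\aa$, viewed first as $(A\oplus 0,0)\in\aa\coprod_M\bb$ and then as $((A\oplus 0,0)\oplus 0,0)\in\Tw_{\le 1}(\aa\coprod_M\bb)$, is sent by $F_2\circ F_1^{\ast 2}$ to $((A\oplus 0,0),(0\oplus 0,0),0)$, which is exactly $(\Phi(A),0,0)$; similarly $B\in\bb$ lands at $(0,\Phi^\ast(B),0)$, and a bimodule element $m\in M(A,B)$ is taken to its image under the obvious inclusion $I$ induced by the block rearrangement in Lemma \ref{lem:star}. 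The main obstacle is bookkeeping rather than substance: one has to track carefully how Maurer-Cartan decorations migrate under the rearrangement $\Delta\mapsto\Delta^\ast$ of Lemma \ref{lem:star} and check that the higher Taylor coefficients of the composite agree with the pullback structure defining $\Tw_{\le 1}M$. Since both $F_1$ and $F_2$ are specified by explicit block-matrix formulas, this verification is mechanical.
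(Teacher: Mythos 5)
Your proof is correct and follows essentially the same route as the paper: the paper's proof simply states that the existence of the functor follows by combining Lemmas \ref{lem:star} and \ref{lem:free}, which is exactly the composition $F_2\circ F_1^{\ast 2}$ you construct, with the same bookkeeping on $A_p$-levels and the same appeal to the explicit block-matrix descriptions for the restriction formula.
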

\begin{proof}
The existence of \eqref{eq:tw} follows by combining Lemma \ref{lem:star} and \ref{lem:free}. The fact that the restriction to $\aa\coprod_M\bb$ has the indicated form follows from the construction of the map.
\end{proof}
\begin{proof}[Proof of Theorem \ref{th:maingluing}]
If $n\ge 13$ then $\Tw_{\le 1}\aa$, $\Tw_{\le 1}\bb$ are at least $A_6$-categories by Lemma \ref{An-ness},
and by Lemma  \ref{lem:Aness:Tw:bimod}  $\Tw_{\le 1}M$ is at least an $A_6$-bimodule.
We can use Corollary \ref{cor:quasi} with $n=6$ and $m=3$, together with Lemma \ref{lem:phiast} below to conclude that the composition 
\[
\textstyle \aa\coprod_M \bb\r \Tw_{\le 1}(\aa\coprod_M\bb)\r \Tw_{\le 1} \aa\coprod_{\Tw_{\le 1}M} \Tw_{\le 1}\bb
\]
(which is equal to $(\Phi,I,\Phi^\ast)$ by Corollary \ref{cor:phiIphiast}) 
is a quasi-equivalence. Since both functors are fully faithful (the second one by Corollary \ref{cor:phiIphiast}), the first one must be a quasi-equivalence as well.

Put $\cc=\aa\coprod_M \bb$. The claim about the exactness of $\varphi_\aa$, $\varphi_\bb$ follows from Theorem \ref{bla}. We clearly also have $H^\ast(\cc)(H^\ast(\bb),H^\ast(\aa))=0$. So
to show that we have a semi-orthogonal decomposition as in \eqref{eq:semi-orth} we have
show that every object $C$ in $H^\ast(\cc)$ is of the form $C\cong \cone(C_\aa\r C_\bb)$ with $C_\aa\in \Ob(\aa)$, $C_\bb\in \Ob(\bb)$. Assume $C=(A\oplus B,\delta)$. We have a fully faithful functor $\aa\coprod_M \bb\subset \Free\aa\coprod_{\Free M}\Free \bb$ and the latter
category is also pre-triangulated (as ``$\Free$'' preserves A-ness). Again by Theorem \ref{bla} this functor is exact.
The following triangle
\[
\Sigma^{-1} A\xrightarrow{\sigma \delta} B\xrightarrow{i} C\xrightarrow[(1)]{p}\Sigma^{-1}A
\]
is distinguished in $H^\ast(\Free\aa\coprod_{\Free M}\Free\bb)$ as it is trivially isomorphic to the standard triangle $\bar{\delta}_{\sigma\delta}$ in $H^\ast(\Tw_{\le 1}(\Free\aa\coprod_{\Free M}\Free\bb))$.
Choose $A'\in \Ob(\aa)$ such that $A'\cong \Sigma^{-1}A$ in $\Free\aa$ ($A'$ is a desuspension of $A$). Then by the axioms of triangulated categories we obtain $\cone(A'\r B)\cong C$
in $H^\ast(\Free\aa\coprod_{\Free M}\Free \bb)$. By fully faithfulness this isomorphism also holds in $H^\ast(\aa\coprod_M\bb)$.

The fact that the corresponding bimodule is as given is clear.
\end{proof}
\begin{lemma} \label{lem:phiast} Let $\aa$ be an $A_n$-category. The strict $A_{\lfloor (n-1)/2\rfloor}$ functors $\Phi,\Phi^\ast :\aa \r \Tw_{\le 1}\aa$ given by $\Phi(A)=(A\oplus 0,0)$, $\Phi^\ast(A)=(0\oplus A,0)$ are homotopic.
\end{lemma}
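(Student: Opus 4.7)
The plan is to exhibit an explicit homotopy $h:\Phi\to\Phi^\ast$ whose only nonzero Taylor coefficient is $h_0$, given by the ``swap'' matrix
\[
h_{0,A}=\begin{pmatrix}0 & 0\\ \id_A & 0\end{pmatrix}\in\Tw_{\le 1}\aa(\Phi A,\Phi^\ast A),
\]
with $h_k=0$ for all $k\ge 1$. Since the Maurer--Cartan elements on $\Phi A$ and $\Phi^\ast A$ vanish and $\id_A$ is closed in $\aa$, each $h_{0,A}$ is closed. Formula \eqref{eq:hformula} extends this data uniquely to a $(\Phi,\Phi^\ast)$-coderivation $h:B^+\aa\to B\Tw_{\le 1}\aa$ of degree $-1$. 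Moreover $h_{0,A}$ already admits the strict inverse $\bigl(\begin{smallmatrix}0 & \id_A\\ 0 & 0\end{smallmatrix}\bigr)$ in $Z^0\Tw_{\le 1}\aa$, so $H^\ast(h_0)$ is automatically a natural isomorphism and the first defining property of a homotopy is free.

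What remains is $[b,h]=0$. Because $h_k=0$ for $k\ge 1$ and $\Phi$, $\Phi^\ast$ are strict, the general formula \eqref{eq:higher2} collapses to a single sum: on a string of composable arrows $sg_k|\cdots|sg_1$ one gets
\[
[b,h]_k=\sum_{p+q=k}(\pm)\,b^{\Tw}_{k+1}\bigl(s\Phi^\ast_1(g_k),\dots,s\Phi^\ast_1(g_{p+1}),sh_{0},s\Phi_1(g_p),\dots,s\Phi_1(g_1)\bigr)
\]
up to Koszul signs. For $k=0$ this degenerates to $b_1(sh_{0,A})=0$. For $k=1$ the two surviving summands $b_2(sh_{0,B},s\Phi_1(g))$ and $b_2(s\Phi^\ast_1(g),sh_{0,A})$ both yield, via \eqref{eq:signs}, the matrix $s\bigl(\begin{smallmatrix}0 & 0\\ g & 0\end{smallmatrix}\bigr)$, and the extra Koszul sign picked up when $h_0$ is commuted past $sg$ produces cancellation; this is precisely the $h_1=0$ specialization of \eqref{eq:h1condition}.

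The only genuinely interesting case is $k\ge 2$, and I expect this to be the main (only) obstacle. Here $b^{\Tw}_{k+1}$ is the multilinear extension of $b^\aa_{k+1}$ to $\aa^{\oplus 2}$: its $(j,i)$-entry is a sum over middle-index paths of $b^\aa_{k+1}$ applied to the corresponding matrix entries (Remark \ref{rem:convention}). Since $\Phi_1(g_i)$ is supported in slot $(0,0)$, $h_0$ in slot $(1,0)$, and $\Phi^\ast_1(g_j)$ in slot $(1,1)$, exactly one index path has all entries nonzero, and it produces
\[
b^\aa_{k+1}(g_k,\dots,g_{p+1},\id_{A_p},g_p,\dots,g_1)\qquad\text{in position }(1,0).
\]
For $k+1\ge 3$ the unit axiom of the $A_n$-category $\aa$ forces $b^\aa_{k+1}(\dots,s\id,\dots)=0$, so every summand vanishes. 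Hence $[b,h]_k=0$ for all $k$, $h$ is a homotopy in the sense of Section \ref{sec:homhom}, and $\Phi$ and $\Phi^\ast$ are homotopic as required.
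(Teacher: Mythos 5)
Your proof is correct and takes the same approach as the paper: the homotopy has $h_k=0$ for $k\ge 1$ and $h_0$ given by the swap/shift matrix (the paper writes $\left(\begin{smallmatrix}0&1\\1&0\end{smallmatrix}\right)$, whose $(0,1)$ entry lives in $\aa(0,0)=0$ and is therefore zero anyway, so your strictly lower-triangular form is the same morphism and is in fact the more precise way to write it). The paper's proof stops at naming $h_0$; your verification of $[b,h]=0$ via the vanishing of the $\delta$-twisting on $\Phi A$, $\Phi^\ast A$, the unique surviving index path through $(1,0)$, and the unit axiom $b_i(\dots,s\Id,\dots)=0$ for $i\ge 3$, supplies exactly the details that the paper leaves implicit.
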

\begin{proof} The homotopy $h$ is such that $h_n=0$ for $n\ge 1$ and $h_0$ is the matrix $\left(\begin{smallmatrix} 0&1\\1&0\end{smallmatrix}\right)$.
\end{proof}
\section{Higher Toda brackets in triangulated and $A_\infty$-categories}
\subsection{Postnikov systems}
\label{sec:postnikovsystems}
Let 
\begin{equation}
\label{eq:sequence}
X^\bullet:X_0\r X_1\r \cdots\r X_n
\end{equation}
be a complex in a triangulated category $\Tscr$, 
i.e.\ a sequence of composable morphisms in $\Tscr$ such that the composition of any two consecutive morphisms is zero. A \emph{Postnikov system} for $X^\bullet$ is any exact diagram in 
$\Tscr$ of the form
\begin{equation}
\label{diag:postnikov}
\xymatrix@=1.5em{
&Y_0\ar[ddr]&&Y_1\ar[ll]_{(1)}\ar[ddr]&&Y_2\ar[ll]_{(1)}\ar[ddr]
&&&&Y_{n-1}\ar[ddr]&&Y_n\ar[ll]_{(1)}\\
&\circlearrowright&\bold{d}&\circlearrowright&\bold{d}&\circlearrowright&&&&\circlearrowright&\bold{d}\\
X_0\ar@{=}[uur]\ar[rr]&&X_1\ar[uur]\ar[rr]&&X_2\ar[rr]\ar[uur]&&X_3&\cdots&
X_{n-1}\ar[uur]\ar[rr]&& X_n\ar[uur]
}
\end{equation}
where the triangles marked with $\circlearrowright$ are commutative and the triangles marked with $\bold{d}$ are distinguished. This means that we should have the following distinguished triangles
\begin{equation}
\label{eq:dist}
Y_i\r X_{i+1}\r Y_{i+1}\xrightarrow[(1)]{} Y_i
\end{equation}
with $X_0=Y_0$. 
A Postnikov system need not exist and if it exists it may not be unique. If a Postnikov system exists then the object $Y_n$ will be called a \emph{convolution} of $X^\bullet$.
\begin{remark} \label{lem:intuition}
 Sometimes it is helpful to think of a convolution $Y_n$ as an object with an ascending filtration with subquotients (starting from the bottom) $X_n,\Sigma X_{n-1}, \Sigma^2 X_{n-2},\ldots,\Sigma^{n}X_0$.
In particular the convolution $Y_n$ comes with maps
\begin{equation}
\label{eq:withmaps1}
\xymatrix{
&Y_n\ar[dl]_p^{(n)}&\\
X_0&&X_n\ar[ul]_i
}
\end{equation}
where $i$ is as \eqref{diag:postnikov} and $p$ it the composition $Y_n\r Y_{n-1}\r\cdots\r Y_0=X_0$ in that same diagram. Note that $pi=0$.
\end{remark}
\subsection{Existence}
Some existence and functoriality results for Postnikov systems are stated in  \cite[Lemmas 1.5, 1.6]{Orlov4} but since they require the vanishing of arbitrary negatives $\Ext$'s between
suitable objects, they are not completely sufficient for our purposes. So we give some slightly strengthened versions in the next two sections.
\begin{lemma} Assume $X^\bullet$ is a complex in a triangulated category $\Tscr$ such that
\begin{equation}
\label{eq:existence}
\Tscr(X_a,X_b)_{-(b-a)+2}=0\qquad \text{ for $b\ge a+3$}.
\end{equation}
Then $X^\bullet$ may be extended to a Postikov system. Moreover if the following condition holds
\begin{equation}
\label{eq:uniqueness}
\Tscr(X_a,X_b)_{-(b-a)+1}=0\qquad \text{for $b\ge a+2$}
\end{equation}
then such an extension is unique, up to non-unique isomorphism.
\end{lemma}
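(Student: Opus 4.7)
The plan is to treat existence and uniqueness by a common induction on $j$, fed by two auxiliary Ext-vanishing statements.

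\medskip

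\emph{Auxiliary vanishing.} I would first prove, by induction on $j$, that (i) under \eqref{eq:existence} one has $\Tscr(Y_j,X_k)_{-(k-j)+2}=0$ for $k\ge j+3$, and (ii) under \eqref{eq:uniqueness} one has $\Tscr(Y_j,X_k)_{-(k-j)+1}=0$ for $k\ge j+2$. The base case $j=0$, where $Y_0=X_0$, is exactly the hypothesis. The inductive step applies $\Tscr(-,X_k)_\ast$ to the distinguished triangle $Y_{j-1}\to X_j\to Y_j\xrightarrow{(1)} Y_{j-1}$ and uses that the boundary in the long exact sequence drops the internal degree by one, which is precisely the shift between the exponents at levels $j-1$ and $j$, so that the inductive hypothesis combines with the relevant vanishing of $\Tscr(X_j,X_k)$ to yield what is needed at $j$.

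\medskip

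\emph{Existence.} I would build the $Y_j$ inductively starting from $Y_0=X_0$. Given $Y_0,\ldots,Y_i$ together with the maps $\alpha_i\colon Y_{i-1}\to X_i$, $g_i\colon X_i\to Y_i$ read off from \eqref{diag:postnikov}, I would extend $d_i\colon X_i\to X_{i+1}$ to some $h_i\colon Y_i\to X_{i+1}$ and set $Y_{i+1}=C(h_i)$; the new triangle \eqref{eq:dist} is then the cone triangle. The obstruction to the extension is the class $d_i\alpha_i\in\Tscr(Y_{i-1},X_{i+1})_0$ in the long exact sequence of $\Tscr(-,X_{i+1})_\ast$ for $Y_{i-1}\to X_i\to Y_i$. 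Since $\alpha_i g_{i-1}=d_{i-1}$ and $X^\bullet$ is a complex, $d_i\alpha_i g_{i-1}=0$, so $d_i\alpha_i$ lifts through the boundary to an element of $\Tscr(Y_{i-2},X_{i+1})_{-1}$, which vanishes by (i). The cases $i=0,1$ are trivial since $d_i\alpha_i=0$ directly.

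\medskip

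\emph{Uniqueness.} Given two Postnikov extensions $(Y_j)$ and $(Y_j')$ of $X^\bullet$, I would construct compatible isomorphisms $\phi_j\colon Y_j\xrightarrow{\sim}Y_j'$ inductively with $\phi_0=\Id_{X_0}$, by applying TR3 at each step to the two triangles $Y_j\xrightarrow{h_j}X_{j+1}\to Y_{j+1}\xrightarrow{(1)}Y_j$ and $Y_j'\xrightarrow{h_j'}X_{j+1}\to Y_{j+1}'\xrightarrow{(1)}Y_j'$ with vertical arrows $\phi_j$ and $\Id_{X_{j+1}}$. The only thing that must be verified is the commutation $h_j=h_j'\phi_j$: the difference is killed by $g_j^\ast$, since both $h_j g_j$ and $h_j'\phi_j g_j=h_j'g_j'$ equal $d_j$, hence lifts to $\Tscr(Y_{j-1},X_{j+1})_{-1}$, which vanishes by (ii). TR3 then produces $\phi_{j+1}$, which is an isomorphism by the triangulated five-lemma. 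The principal obstacle throughout is purely bookkeeping: arranging the exponents in (i) and (ii) so that the boundaries in the long exact sequences place the obstruction classes exactly in groups that \eqref{eq:existence} and \eqref{eq:uniqueness} force to vanish.
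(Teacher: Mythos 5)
Your proposal is correct and takes essentially the same approach as the paper: an inductive construction of the $Y_j$ for existence, with the obstruction pushed down to a group $\Tscr(Y_{i-2},X_{i+1})_{-1}$ which is shown to vanish, and for uniqueness an inductive construction of compatible isomorphisms via TR3. The only cosmetic difference is that you prove the needed vanishings (i),(ii) for $\Tscr(Y_j,X_k)$ by a direct induction on $j$, whereas the paper reads them off from the filtration description of $Y_j$ in Remark \ref{lem:intuition}, and the paper delegates uniqueness to the separate weak-functoriality Lemma \ref{th:postnikov} applied to the identity of $X^\bullet$ instead of building the isomorphisms in-line.
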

\begin{proof}
The Posnikov system built on $X^\bullet$ will be constructed inductively. Assume we
have constructed the part involving $X_0,X_1,\ldots,X_i$, $Y_0,Y_1,\ldots, Y_{i}$ (so this is a Postnikov system on $X_0\r\cdots\r X_i$). To lift the map $X_i\r X_{i+1}$ to a map $Y_i\r X_{i+1}$ we need that the
composition
\[
Y_{i-1}\r X_i\r X_{i+1}
\]
is zero. Since the composition of $X_{i-1}\r X_i\r X_{i+1}$ is zero by definition
it follows from \eqref{eq:dist} that we should have $\Tscr(Y_{i-2},X_{i+1})_{-1}=0$. Using Remark \ref{lem:intuition} we see that this condition is implied by \eqref{eq:existence}.

Once we have lifted to $X_i\r X_{i+1}$ to $Y_i\r X_{i+1}$ we may construct $Y_{i+1}$ via the the distinguished triangle \eqref{eq:dist}.

\medskip

To obtain uniqueness we note that if $X^\bullet$ can be extended to two Postnikov systems then by Lemma \ref{th:postnikov} below the identity on $X^\bullet$ can be extended to
a morphism between these Postnikov systems. It is then easy to see that this extension must be an isomorphism.
\end{proof}
\subsection{Weak functoriality}
\begin{lemma}
\label{th:postnikov}
Assume we have a morphism of complexes in a triangulated category $\Tscr$
\begin{equation}
\label{postnikov}
\xymatrix{
X_0\ar[rr]\ar[d]&& X_1\ar[rr]\ar[d]&&X_2\ar[r]\ar[d]& \cdots\ar[r]& X_n\ar[d]\\
X'_0\ar[rr]&& X'_1\ar[rr]&&X'_2\ar[r]& \cdots\ar[r]& X'_n\\
}
\end{equation}
such that the following conditions hold
\begin{align}
\Tscr(X_a,X'_b)_{-(b-a)+1}&=0\qquad \text{for $b\ge a+2$} \label{eq:functoriality3}
\end{align}
Then \eqref{postnikov} can be extended to a map of Postnikov
systems (not necessarily uniquely).
\end{lemma}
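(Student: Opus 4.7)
The plan is to extend the given morphism $(f_i)_i$ of complexes to maps $g_i:Y_i\to Y'_i$ between the Postnikov systems by induction on $i$. We set $g_0:=f_0$ (using $Y_0=X_0$ and $Y'_0=X'_0$). Given $g_0,\ldots,g_i$ forming a morphism of the truncated Postnikov systems, we wish to produce $g_{i+1}$ by applying TR3 to the vertical morphism given by $g_i$ and $f_{i+1}$ between the distinguished triangles
\[
Y_i\to X_{i+1}\to Y_{i+1}\xrightarrow[(1)]{} Y_i\quad\text{and}\quad Y'_i\to X'_{i+1}\to Y'_{i+1}\xrightarrow[(1)]{} Y'_i.
\]
The only hypothesis of TR3 that requires verification is the commutativity of the square formed by $g_i$, $f_{i+1}$ and the horizontal maps $\alpha_i:Y_i\to X_{i+1}$, $\alpha'_i:Y'_i\to X'_{i+1}$.

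To check this, set $\beta:=\alpha'_i g_i-f_{i+1}\alpha_i\in\Tscr(Y_i,X'_{i+1})_0$. The compositions $X_i\to Y_i\xrightarrow{\alpha_i} X_{i+1}$ and $X'_i\to Y'_i\xrightarrow{\alpha'_i} X'_{i+1}$ coincide with the original maps $X_i\to X_{i+1}$ and $X'_i\to X'_{i+1}$ by the $\circlearrowright$-commutativities in the two Postnikov diagrams. Combining this with the inductive hypothesis $g_i\circ(X_i\to Y_i)=(X'_i\to Y'_i)\circ f_i$ and with the assumption that \eqref{postnikov} is a morphism of complexes, one obtains $\beta\circ(X_i\to Y_i)=0$. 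Hence by the long exact sequence attached to the distinguished triangle $Y_{i-1}\to X_i\to Y_i\xrightarrow[(1)]{} Y_{i-1}$, $\beta$ is the image under the connecting map of some $\gamma\in\Tscr(Y_{i-1},X'_{i+1})_{-1}$ (with the convention $Y_{-1}=0$ taking care of the base case $i=0$).

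The crux is to show $\Tscr(Y_{i-1},X'_{i+1})_{-1}=0$. By a d\'evissage along the defining distinguished triangles of the Postnikov system for $Y_{i-1}$, whose subquotients are $\Sigma^{i-1-a}X_a$ for $a=0,\ldots,i-1$ (cf.\ Remark \ref{lem:intuition}), this group is sandwiched by the groups
\[
\Tscr(\Sigma^{i-1-a}X_a,X'_{i+1})_{-1}=\Tscr(X_a,X'_{i+1})_{-(i+1-a)+1},\qquad a=0,\ldots,i-1,
\]
each of which vanishes by \eqref{eq:functoriality3} applied with $b=i+1$ (so that $b-a\ge 2$). Hence $\gamma=0$, $\beta=0$, the square commutes, and TR3 yields a filler $g_{i+1}:Y_{i+1}\to Y'_{i+1}$ as part of a morphism of triangles; this automatically provides the commutativities needed to keep the induction going. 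The main obstacle is the degree bookkeeping in this last step, i.e.\ confirming that the shifts arising from the filtration of $Y_{i-1}$ line up exactly with the indexing in \eqref{eq:functoriality3}. The non-uniqueness in the statement comes from the fact that TR3 only guarantees existence, not uniqueness, of the filler $g_{i+1}$.
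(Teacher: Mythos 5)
Your proof is correct and follows essentially the same route as the paper's: induct on $i$, reduce to the commutativity of the relevant square, measure the failure by the difference morphism, show it vanishes after precomposing with $X_i\to Y_i$, and then kill it using the distinguished triangle $Y_{i-1}\to X_i\to Y_i\to$ together with the vanishing $\Tscr(Y_{i-1},X'_{i+1})_{-1}=0$ obtained by d\'evissage along the filtration of $Y_{i-1}$ and hypothesis \eqref{eq:functoriality3}. The degree bookkeeping you were worried about checks out: $\Tscr(\Sigma^{i-1-a}X_a,X'_{i+1})_{-1}=\Tscr(X_a,X'_{i+1})_{-((i+1)-a)+1}$ and $(i+1)-a\ge 2$ for $a\le i-1$.
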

\begin{proof}
We work inductively. 
Assume that we have defined the extended map
on $Y_0,\ldots,Y_i$ with the required commutativity holding
on $Y_0,\ldots,Y_i,X_0,\ldots,X_i$.
We perform the induction step. We have a diagram
\begin{equation}
\label{eq:morf}
\xymatrix{
Y_i\ar[r]\ar[d]\ar@{.>}[dr]|{\delta}&X_{i+1}\ar[r]\ar[d]&Y_{i+1}\ar[r]^{(1)}&Y_i\ar[d]\\
Y'_i\ar[r]&X'_{i+1}\ar[r]&Y'_{i+1}\ar[r]_{(1)}&Y'_i\\
}
\end{equation}
We do not know that the left most square is commutative, so  let the dotted arrow denote the difference of the two compositions. From the following diagram 
\[
\xymatrix{
X_i\ar[r]\ar[d]&Y_i\ar@{.>}[rd]|{\delta}\ar[r]\ar[d]&X_{i+1}\ar[d]\\
X'_i\ar[r]&Y'_i\ar[r]&X'_{i+1}
}
\]
we obtain that the composition of $\delta$ with $X_i\r Y_i$ is zero. So in view
of the distinguished triangle 
\[
Y_{i-1}\r X_i\r Y_i\xrightarrow[(1)]{} Y_{i-1}
\]
$\delta$ will be zero provided $\Tscr(Y_{i-1},X'_{i+1})_{-1}=0$.  This follows from Remark \ref{lem:intuition} and  the hypothesis \eqref{eq:functoriality3}.

So $\delta=0$ and the square in 
\eqref{eq:morf} is commutative. We now finish by invoking TR3.
\end{proof}
\subsection{Higher Toda brackets}
\label{sec:higher:toda}
In this section we define higher Toda brackets. One may verify that they are the same as those defined in \cite{ChristensenFrankland}.
\begin{definition}\label{def:toda} Let $X^\bullet=((X_i)_{i=0}^{n},(d_i)_{i=0}^{n-1})$ for $n\ge 3$ be a complex in a triangulated category $\Tscr$.
The (higher) \emph{Toda bracket} $\langle X^\bullet \rangle \subset \Tscr(X_{0},X_{n})_{-n+2}$ of $X^\bullet$ is 
the collection of compositions $\beta\alpha$ where $\alpha$, $\beta$ fit in the following commutative diagram
\begin{equation}
\label{diag:toda}
\xymatrix{
&&&Y\ar[dl]_p^{(n-2)}\ar@/^1pc/@{.>}[rrrd]^\beta\\
X_{0}\ar@/^1pc/@{.>}[rrru]^\alpha_{(-n+2)}\ar[rr]_{d_0} && X_1&& X_{n-1}\ar[rr]_{d_{n-1}}\ar[ul]_i && X_{n}
}
\end{equation}
where $Y$ is a convolution of $(X_i)_{i=1}^{n-1}$ and $p$, $i$ are as in \eqref{eq:withmaps1}.

\end{definition}
 Note that
if $n>3$ then $\langle X^\bullet \rangle$ may be empty.
\begin{theorem}
Let $X^\bullet$ be as in Definition \ref{def:toda}. 
\begin{enumerate} 
\item If $t\in \langle X^\bullet \rangle$ then $t+d_{n-1}\Tscr(X_0,X_{n-1})_{-n+2}+\Tscr(X_1,X_n)_{-n+2}d_0\subset
\langle X^\bullet \rangle$. 
\item If
\begin{equation}
\label{eq:todaexistence}
\begin{aligned}
\Tscr(X_a,X_b)_{-(b-a)+2}=0\qquad \text{for $b-a\in [3,n-1]$}.
\end{aligned}
\end{equation}
then $\langle X^\bullet\rangle\neq \emptyset$.
\item If moreover
\begin{equation}
\label{toda:coset}
\Tscr(X_a,X_b)_{-(b-a)+1}=0\qquad \text{for $b-a\in [2, n-2]$}
\end{equation}
then $\langle X^\bullet \rangle$ is a coset of $d_{n-1}\Tscr(X_0,X_{n-1})_{-n+2}+\Tscr(X_1,X_n)_{-n+2}d_0$.
\end{enumerate}
\end{theorem}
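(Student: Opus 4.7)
The plan is to address the three parts sequentially, leaning on Remark \ref{lem:intuition} (which gives the identity $pi=0$) and the earlier existence/uniqueness lemma for Postnikov systems.

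For Part~(1), I would just exhibit the needed modifications directly. Given a realization $(Y,\alpha,\beta)$ of $\beta\alpha\in\langle X^\bullet\rangle$ and $\gamma\in \Tscr(X_0,X_{n-1})_{-n+2}$, put $\alpha'=\alpha+i\gamma$. Since $pi=0$, we still have $p\alpha'=d_0$, so $(Y,\alpha',\beta)$ is a valid realization and $\beta\alpha'=\beta\alpha+\beta i\gamma=\beta\alpha+d_{n-1}\gamma$. The modification by $\delta d_0$ for $\delta\in \Tscr(X_1,X_n)_{-n+2}$ is symmetric: replace $\beta$ by $\beta'=\beta+\delta p$; then $\beta' i=d_{n-1}$ (again using $pi=0$) and $\beta'\alpha=\beta\alpha+\delta d_0$.

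For Part~(2), first apply the earlier existence lemma to $X_1\to\cdots\to X_{n-1}$: the hypothesis $\Tscr(X_a,X_b)_{-(b-a)+2}=0$ for $b-a\ge 3$ in this range is subsumed by \eqref{eq:todaexistence}, giving a convolution $Y$ with Postnikov $W_0=X_1,\ldots,W_{n-2}=Y$ and triangles $W_{j-1}\to X_{j+1}\to W_j\xrightarrow{(1)}W_{j-1}$. To construct $\alpha$ I would iteratively lift through these triangles: set $\alpha_0=d_0\in\Tscr(X_0,W_0)_0$, and given $\alpha_j\in\Tscr(X_0,W_j)_{-j}$, lift along the connecting map $W_{j+1}\to W_j$ of degree $1$ via the long exact sequence of $\Tscr(X_0,-)$; the obstruction lies in $\Tscr(X_0,X_{j+2})_{-j}$. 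For $j=0$ this obstruction equals $d_1d_0=0$, and for $j\in[1,n-3]$ it vanishes by \eqref{eq:todaexistence} with $a=0$, $b=j+2\in[3,n-1]$. Taking $\alpha=\alpha_{n-2}$, one has $p\alpha=d_0$ by construction, since the connecting maps compose to $p$. The existence of $\beta$ is formally dual: extend $d_{n-1}$ along $i$ by lifting through the same Postnikov but using $\Tscr(-,X_n)$, with obstructions again killed by \eqref{eq:todaexistence}.

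For Part~(3), \eqref{toda:coset} subsumes the uniqueness hypothesis of the earlier lemma (applied to $X_1\to\cdots\to X_{n-1}$), so any two realizations $(Y_k,\alpha_k,\beta_k)$ admit a Postnikov isomorphism $\phi:Y_1\to Y_2$ respecting $i$ and $p$. Replacing $(\alpha_2,\beta_2)$ by $(\phi^{-1}\alpha_2,\beta_2\phi)$ preserves $\beta_2\alpha_2$ and reduces us to $Y_1=Y_2=Y$. Then $p(\alpha_2-\alpha_1)=0$ and $(\beta_2-\beta_1)i=0$, and we must show $\alpha_2-\alpha_1=i\gamma$ and $\beta_2-\beta_1=\delta p$ for some $\gamma,\delta$ in the appropriate $\Hom$-spaces. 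Using the long exact sequence of the top triangle $W_{n-3}\to X_{n-1}\xrightarrow{i}Y\xrightarrow{\rho}W_{n-3}$, the first factorization is equivalent to $\rho(\alpha_2-\alpha_1)=0$ in $\Tscr(X_0,W_{n-3})_{-n+3}$; since $p$ factors as $p'\circ\rho$ with $p':W_{n-3}\to X_1$ of degree $n-3$, it suffices to show $p'_{\ast}$ is injective on this space. Iterating the long exact sequences along the Postnikov of $W_{n-3}$, each kernel injects into $\Tscr(X_0,X_{j+2})_{-(j+2)+1}$ for $j\in[0,n-4]$, which vanishes by \eqref{toda:coset}. The factorization $\beta_2-\beta_1=\delta p$ is dual, using $\Tscr(-,X_n)$ and surjectivity of $(p')^{\ast}$ on the relevant degrees. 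Combining, $\beta_2\alpha_2-\beta_1\alpha_1=(\beta_2-\beta_1)\alpha_1+\beta_2 i\gamma=\delta d_0+d_{n-1}\gamma$, establishing the coset claim. The main obstacle throughout is the degree bookkeeping: verifying that the intermediate obstruction groups in the Postnikov inductions line up exactly with the vanishing ranges given in \eqref{eq:todaexistence} and \eqref{toda:coset}.
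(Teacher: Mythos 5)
Your argument is correct and follows the paper's strategy closely, but with a genuinely different (and arguably cleaner) route in Part~(3). Parts~(1) and~(2) match the paper almost exactly: modifying $\alpha$ by $i\gamma$ and $\beta$ by $\delta p$, building the convolution inductively, and killing obstructions with \eqref{eq:todaexistence}; your description of the $\beta$ existence as ``formally dual'' is a little loose (the paper handles $\beta$ in a single lifting step, pushing the vanishing down the filtration of $Y_{n-2}$), but the degree bookkeeping works out identically.

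The real divergence is in Part~(3). The paper parametrizes the freedom in $\alpha$ by constructing auxiliary objects $Y'_i=\Sigma^{-1}\cone(Y_i\to\Sigma^{i-1}X_1)$ through repeated applications of the octahedral axiom, then showing any map $X_0\to\Sigma^{-n+2}Y'_{n-1}$ factors through the bottom subquotient $\Sigma^{-n+2}X_{n-1}$. You instead first reduce to a common convolution $Y$ (which the paper also does implicitly when it invokes uniqueness, but you make it explicit via the Postnikov isomorphism respecting $i$ and $p$), and then run the factorization through the top Postnikov triangle $W_{n-3}\to X_{n-1}\xrightarrow{i} Y\xrightarrow{\rho} W_{n-3}$: since $p=p'\rho$, injectivity of $p'_\ast$ on $\Tscr(X_0,W_{n-3})_{-n+3}$ forces $\rho(\alpha_2-\alpha_1)=0$, hence $\alpha_2-\alpha_1=i\gamma$. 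The injectivity is verified by iterating the long exact sequences down the Postnikov of $W_{n-3}$, with each successive obstruction in $\Tscr(X_0,X_{j+2})_{-(j+2)+1}$ killed by \eqref{toda:coset} — exactly the same vanishing that fuels the paper's $Y'$-argument, but without having to build the $Y'_i$ explicitly. The $\beta$-factorization uses the dual surjectivity of $(p')^\ast$, which coincides with what the paper does. Both routes rest on the same filtration of the convolution; yours trades the octahedron chase for a clean injectivity/surjectivity statement, which is a modest simplification. The degree checks line up correctly throughout.
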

\begin{proof}
\begin{enumerate}
\item If $\phi \in \Tscr(X_0,X_{n-1})_{-n+2}$ then as $pi\phi=0$, adding to $i\phi$ to $\alpha$ still keeps the diagram \eqref{diag:toda} commutative. Since $\beta i\phi=d_{n-1}\phi$ we obtain
that $t+d_{n-1}\phi\in \langle X^\bullet\rangle$. A similar reasoning applies if
we start with $\phi\in \Tscr(X_1,X_n)_{-n+2}$.
\item Note that \eqref{eq:todaexistence} implies in particular \eqref{eq:existence} for $(X_i)_{i=1}^{n-1}$. So a convolution $Y$ as in \eqref{diag:toda} exists and we have to verify the existence of $\alpha$ and $\beta$.
We will now introduce notations similar to \S\ref{sec:postnikovsystems}. So we will denote the Postnikov systems giving rise to $Y$ by $Y_1,\ldots,Y_{n-1}$ where $Y_{n-1}=Y$ and $Y_1=X_1$.

We first consider the existence of $\beta$.
We have a distinguished triangle
\begin{equation}
\label{eq:firstdist}
Y_{n-2}\r X_{n-1}\xrightarrow{i} Y_{n-1}\r
\end{equation}
Thus in order for the map $d_{n-1}:X_{n-1}\r X_n$ to factor through $Y_{n-1}$ we have to prove that  the composition $Y_{n-2}\r X_{n-1} \r X_n$ is zero. Since we already know that the composition
$X_{n-2}\r Y_{n-2}\r X_{n-1}\r X_n$ is zero and there is a distinguished triangle
\[
X_{n-2}\r Y_{n-2}\r \Sigma Y_{n-3}\r 
\]
it is sufficient to show that $\Tscr(\Sigma Y_{n-3}, X_n)_0=0$. Now by Remark \ref{lem:intuition}, $\Sigma Y_{n-3}$ has subquotients $\Sigma X_{n-3},\allowbreak\dots,\allowbreak\Sigma^{n-3} X_1$. The conclusion now follows
from \eqref{eq:todaexistence}.

Now we look at the existence of $\alpha$.
We will successively lift $X_0\xrightarrow{d_0} X_1=Y_1$  to maps $X_0\xrightarrow{(-1)} Y_2$, \dots. $X_0\xrightarrow{(-n+2)} Y_{n-1}$. The last map is the sought $\alpha$. 
First we look at the distinguished triangle
\[
X_1\r X_2\r Y_2\r 
\]
Since the composition $X_0\r X_1\r X_2$ is zero the map $d_0$ factors through $\Sigma^{-1}Y_2$. To continue we use the distinguished triangles
\[
Y_{i-1}\r X_i\r Y_{i}\r 
\]
for $3\le i\le n-1$.
Assume we have constructed the map $X_0\r \Sigma^{-i+2} Y_{i-1}$. From \eqref{eq:todaexistence}. 
we obtain that the composition $X_0\r \Sigma^{-i+2}Y_{i-1}\r \Sigma^{-i+2}X_i$ is zero and
hence $X_0\r \Sigma^{-i+2} Y_{i-1}$ factors through $\Sigma^{-i+1} Y_i$ and we can continue.
\item  First we observe that \eqref{toda:coset} implies in particular \eqref{eq:uniqueness} and hence
the Postnikov system built on $(X_i)_1^{n-1}$ is unique.
To prove asserted statement we have to investigate the freedom in choosing $\alpha$ and $\beta$.

Again we will discuss $\beta$ first. $\beta$ is determined up to an element of the kernel of $\Tscr(Y_{n-1},X_n)_0\r\allowbreak \Tscr(X_{n-1},X_n)_0$. Using
the distinguished triangle \eqref{eq:firstdist}
we see that $\beta$ is determined up to a  composition of the form $Y_{n-1}\r \Sigma Y_{n-2}
\xrightarrow{\gamma} X_n$. Using Remark \ref{lem:intuition} we see that $\Sigma Y_{n-2}$ 
has subquotients $\Sigma X_{n-2},\Sigma^2 X_{n-3},\ldots \Sigma^{n-2} X_1$. Hence by  \eqref{toda:coset} any morphism
$\Sigma Y_{n-2}\r X_n$ factors through $\Sigma^{n-2} X_1$. It follows that $\beta$ is
determined up to a composition of the form $Y_{n-1}\xrightarrow{p} \Sigma^{n-2}X_1\xrightarrow{\gamma'} X_n$. Composing with $\alpha$ we see  as in (1) that changing $\beta$ in this way,
changes $\beta\alpha$ by an element of $\Tscr(X_1,X_n)_{-n+2}d_0$.

Now we discuss $\alpha$. $\alpha$ is determined up to an element of $\ker (\Tscr(X_0,Y_{n-1})_{-n+2}
\r \Tscr(X_0,X_1)_0$. 
Define $Y'_i=\Sigma^{-1}\cone (Y_i\r \Sigma^{i-1}X_1)$, so that in particular $Y'_1=0$, $Y'_2=X_2$. Using the octahedral axiom we may construct
commutative diagrams for $i=2,\ldots, n-1$
\[
\xymatrix{
&&&\\
\Sigma Y'_{i-1} \ar[u]\ar[r] & \Sigma Y_{i-1}\ar[r]\ar[u]&\Sigma^{i-1} X_1\ar@{=}[d]\ar[r] &
\\
Y'_{i}\ar[u] \ar[r] & Y_{i}\ar[r]\ar[u]&\Sigma^{i-1} X_1\ar[r] &
\\
X_i\ar[u]\ar@{=}[r] &X_i\ar[u]\\
}
\]
with rows and columns that are distinguished triangles, where the maps not involving $Y'$'s are taken from the Postnikov system.  Hence similar to Remark \ref{lem:intuition}, $Y'_{i}$ has subquotients $X_{i},\Sigma X_{i-1},\ldots,\allowbreak\Sigma^{i-2}X_2$.

We have a distinguished triangle
\[
Y'_{n-1}\r Y_{n-1}\r \Sigma^{n-2} X_1\r 
\]
and hence  $\alpha$ is determined up to a composition $X_0\xrightarrow{\delta} \Sigma^{-n+2}
Y'_{n-1}\r \Sigma^{-n+2}Y_{n-1}$. Now  $ \Sigma^{-n+2}
Y'_{n-1}$ has subquotients $\Sigma^{-n+2}X_{n-1},\ldots,\Sigma^{-1}X_2$ and hence 
by \eqref{toda:coset} we obtain that any map $X_0\xrightarrow{\delta} \Sigma^{-n+2}Y'_{n-1}$  factors through $\Sigma^{-n+2}X_{n-1}$. Hence we obtain that $\alpha$ is determined up
to a composition $X_0\xrightarrow{\delta'} \Sigma^{-n+2}X_{n-1}\r \Sigma^{-n+2} Y'_{n-1}\r \Sigma^{-n+2} Y_{n-1}$ which by construction is the same as a composition
$X_0\xrightarrow{\delta'} \Sigma^{-n+2}X_{n-1}\xrightarrow{\Sigma^{-n+2}i} Y_{n-1}$. We now finish as for $\beta$.\qedhere
\end{enumerate}
\end{proof}
\subsection{Postnikov systems associated to twisted complexes}
In this section $\aa$ is an $A_\infty$-category.
\subsubsection{More on the category $\Free(\aa)$}
Recall that in \S\ref{sec:free} we introduced the strict endo-functor  $\Sigma$ of $\Free(\aa)$. 
Below we introduce some more notation concerning the category $\Free(\aa)$. If $X=\Sigma^x Y$ for $Y\in \Ob(\aa)$ then we let $\eta_{X,a,b}:
\Sigma^aX\r \Sigma^b X$ be given by $\sigma^{b-a}\Id_Y$. Note that $m_2(\eta_{X,b,c},\eta_{X,a,b})=\eta_{X,a,c}$. All operations
on $\aa$, except $m_2$, vanish when one of its arguments is of the form $\eta_{X,b,c}$. 
Moreover we have formulas
\begin{align*}
m_n(\dots,f,m_2(\eta_{X,a,b},g),\ldots)&=m_n(\dots,m_2(f,\eta_{X,a,b}),g,\ldots)\\
m_n(m_2(\eta_{X,a,b},f),\dots)&=(-1)^{(-2+n)(b-a)}m_2(\eta_{X,a,b},m_n(f,\ldots))\\
m_n(\dots,m_2(g,\eta_{X,a,b})))&=m_2(m_n(\ldots,g),\eta_{X,a,b}).
\end{align*}
and their $b$-versions which are useful for computations
\begin{equation}
\label{eq:useful}
\begin{aligned}
b_n(\dots,sf,sm_2(\eta_{X,a,b},g),\ldots)&=(-1)^{b-a}b_n(\dots,sm_2(f,\eta_{X,a,b}),sg,\ldots)\\
b_n(sm_2(\eta_{X,a,b},f),\dots)&=m_2(\eta_{X,a,b},b_n(sf,\ldots))\\
b_n(\dots,sm_2(g,\eta_{X,a,b})))&=m_2(b_n(\ldots,sg),\eta_{X,a,b}).
\end{aligned}
\end{equation}
Below we usually write $\eta_{X,a,b}g$ for $m_2(\eta_{X,a,b},g)$ and similarly $m_2(g,\eta_{X,a,b})$. By the vanishing of $m_3$ on arguments involving $\eta_{X,a,b}$
this will not lead to confusion. Sometimes
we also write $\eta^{-1}_{X,a,b}$ for $\eta_{X,b,a}$. One verifies using
the definition of the functor $\Sigma$ (see \S\ref{sec:free})
that for $f:\Sigma^a X\r \Sigma^b Z$ one has
\begin{equation}
\label{eq:sigmadef}
\Sigma^nf=(-1)^{n|f|}\eta_{Z,b,b+n}f\eta_{X,a+n,a}.
\end{equation}
Finally we put $\eta_{X}=\eta_{X,0,1}$.
\subsubsection{More on the triangulated structure of $\Tw \aa$}
Let $f:(A,\delta_A)\r (B,\delta_B)$ be a closed morphism of degree $0$ in $\Tw \aa$. To $f$ we associate a triangle in $H^\ast(\Tw \aa)$.
\begin{equation}
\label{eq:standard1}
(A,\delta_A) \xrightarrow{f} (B,\delta_B)\xrightarrow{i} (C(f),\delta_{C(f)}) \xrightarrow[(1)]{p} (A,\delta_A)
\end{equation}
where $C(f)=\Sigma A\oplus B$ and 
\[
\delta_{C(f)}=\begin{pmatrix}
\Sigma \delta_A &0\\
f\eta_A^{-1}&\delta_B
\end{pmatrix}
\]
and furthermore
\begin{equation}
\label{eq:iptwisted}
i=\begin{pmatrix}0\\\id_B
\end{pmatrix},\qquad 
p=(\eta_A^{-1}\ 0).
\end{equation}
The following lemma is an easy verification:
\begin{lemma} The triangles \eqref{eq:standard1} are distinguished according to Definition \ref{triangulatedstructure}.
\end{lemma}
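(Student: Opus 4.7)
By Definition \ref{triangulatedstructure} applied to the $A_\infty$-category $\Tw\aa$, I must show that the image of \eqref{eq:standard1} under $H^\ast(\Phi):H^\ast(\Tw\aa)\to H^\ast(\Tw_{\le 1}(\Tw\aa))$ is isomorphic in $H^\ast(\Tw_{\le 1}(\Tw\aa))$ to a standard distinguished triangle. The natural candidate is $\bar{\delta}_f$ built on $f:(A,\delta_A)\to(B,\delta_B)$ itself, so that the first two objects and the leftmost map already coincide with $\Phi$ applied to \eqref{eq:standard1}. The problem therefore reduces to exhibiting an isomorphism of cones, intertwining the inclusion/projection of $\bar{\delta}_f$ with $\Phi(i)$, $\Phi(p)$.

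Applying Definition \ref{def:cone} to $\Tw\aa$ produces
\[
C'(f)=\Bigl(\Sigma(A,\delta_A)\oplus(B,\delta_B),\ \begin{pmatrix}0&0\\ \sigma^{-1}f&0\end{pmatrix}\Bigr)\in\Tw_{\le 1}(\Tw\aa),
\]
a length-two twisted complex whose entries in $\Free(\Tw\aa)$ still carry their internal $\delta_A$, $\delta_B$, and whose Maurer--Cartan element only records the off-diagonal~$f$. In contrast, $\Phi(C(f),\delta_{C(f)})$ is a length-one object of $\Tw_{\le 1}(\Tw\aa)$ whose single entry is the ``flattened'' twisted complex $(\Sigma A\oplus B,\delta_{C(f)})\in\Tw\aa$ of \eqref{eq:standard1}, absorbing $\Sigma\delta_A$, $\delta_B$ and $f\eta_A^{-1}$ into its $2\times 2$ matrix. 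I would write the flattening isomorphism $\phi:C'(f)\xrightarrow{\sim}\Phi(C(f),\delta_{C(f)})$ as a row $(\phi_1,\phi_2)$ with $\phi_2=(0,\id_B)^{\mathsf T}$ the inclusion of the second summand and $\phi_1:\Sigma(A,\delta_A)\to(C(f),\delta_{C(f)})$ identifying the formal shift in $\Free(\Tw\aa)$ with the internal shift $(\Sigma A,\Sigma\delta_A)$; concretely $\phi_1$ corresponds via $\Free$ to the degree $-1$ morphism $(\eta_A,0)^{\mathsf T}:(A,\delta_A)\to(C(f),\delta_{C(f)})$ in $\Tw\aa$.

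Commutativity with $\Phi(i)$, $\Phi(p)$ (and the corresponding $i', p'$ of $\bar{\delta}_f$) is then immediate from the shapes of $\phi_1$, $\phi_2$, invoking $\eta_A^{-1}\circ\eta_A=\id_A$ and \eqref{eq:iptwisted}. To verify that $\phi$ is closed in $\Tw_{\le 1}(\Tw\aa)$, I would expand $b_1(\phi)$ via \eqref{MC multiplication} and collapse the resulting sum using the compatibility formulas \eqref{eq:useful}: every $m_n$ with $n\ge 3$ evaluated on arguments drawn from $\{\eta_A,\id_B\}$ together with $\delta$-components vanishes, and the surviving $m_1$- and $m_2$-terms cancel by the Maurer--Cartan equation for $\delta_{C(f)}$ (which implicitly encodes $m_1(f)=0$ once combined with the twisted pieces). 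Since $\phi$ admits an obvious two-sided inverse $\psi$ built from $\eta_A^{-1}$ and projection onto the second summand, it is actually an honest isomorphism in $Z^0(\Tw_{\le 1}(\Tw\aa))$, which delivers the required isomorphism of triangles in $H^\ast(\Tw_{\le 1}(\Tw\aa))$.

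The main obstacle is entirely notational: one must separate the formal shift $\Sigma$ on $\Free(\Tw\aa)$, which produces $\Sigma(A,\delta_A)$ as a new object carrying its own $\eta_{(A,\delta_A)}$, from the internal shift $(A,\delta_A)\mapsto(\Sigma A,\Sigma\delta_A)$ acting within $\Tw\aa$. These are canonically identified via $\eta_A$, but tracking Koszul signs when higher operations hit mixed arguments requires disciplined use of \eqref{eq:sigmadef} and \eqref{eq:useful}---which is precisely why the author describes the verification as ``easy''.
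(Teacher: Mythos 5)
The paper gives no proof of this lemma beyond the remark that it is ``an easy verification,'' so there is nothing substantive to compare against. Your plan is the natural way to carry that verification out and it is sound: Definition \ref{triangulatedstructure}, applied with $\Tw\aa$ as the ambient $A_\infty$-category, demands an isomorphism in $H^\ast(\Tw_{\le 1}(\Tw\aa))$ between $\Phi$ of \eqref{eq:standard1} and $\bar{\delta}_f$, and the flattening isomorphism you describe (which collapses the length-two twisted complex $\bigl(\Sigma(A,\delta_A)\oplus(B,\delta_B),\,\sigma^{-1}f\bigr)$ over $\Tw\aa$ onto the internal cone $(\Sigma A\oplus B,\delta_{C(f)})$ inside $\Tw\aa$) is exactly the right map, a strict $Z^0$-isomorphism intertwining $i,p$ with the $\Tw_{\le 1}(\Tw\aa)$-inclusion and projection. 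You are also right to single out the real content of the check, namely separating the formal shift on $\Free(\Tw\aa)$ from the internal shift inside $\Tw\aa$ and pushing signs through $\eta_A$ via \eqref{eq:sigmadef} and \eqref{eq:useful}; once those are handled, closedness of $\phi$ reduces to the Maurer--Cartan equations for $\delta_A,\delta_B,\delta_{C(f)}$ and strict unitality killing all higher $m_n$ with an $\eta$ or identity argument. The one place I would tighten your write-up is the compatibility check: since source and target of $\phi$ carry different Maurer--Cartan elements, the compositions $\phi\circ i'$ and $p\circ\phi$ in $\Tw_{\le 1}(\Tw\aa)$ are computed with $\delta$-insertions as in \eqref{MC multiplication}; these extra terms vanish (again because the only entries of $i',p,\phi$ are $\eta$'s and identities), but that vanishing should be stated rather than absorbed into ``immediate from the shapes of $\phi_1,\phi_2$.''
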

\subsubsection{Postnikov systems from objects in $\Tw \aa$}
\begin{proposition}
\label{prop:postnikovtwisted}
A twisted complex in $\Tw \aa$
\[
Y_n=(\Sigma^nX_0\oplus \Sigma^{n-1}X_1\oplus\cdots\oplus X_n,\delta)
\]
with $X_i\in \Free(\aa)$ gives rise to a Postnikov system in $H^\ast(\Tw\aa)$ built on the complex
\[
X_0\xrightarrow{d_0}X_1\xrightarrow{d_1}\cdots\xrightarrow{d_{n-1}}X_n
\]
with 
\[
d_{j-1}=(-1)^{n-j} \eta_{X_j,n-j,0}\cdot\delta_{j,j-1} \cdot\eta_{X_{j-1},0,n-j+1},
\]
where $\delta_{j,j-1}:\Sigma^{n-j+1} X_{j-1}\r \Sigma^{n-j}X_j$ is the $(j,j-1)$ entry of the matrix $\delta$ (the $\cdot$'s are for easier reading).

In the Postnikov system we also have
\[
Y_j=(\Sigma^{j}X_0\oplus \Sigma^{j-1}X_1\oplus\cdots\oplus X_j,\delta_{Y_j})
\]
such that $\Sigma^{n-j}\delta_{Y_i}$ is given by the upper left $j+1\times j+1$-square in the matrix representing $\delta$. 

Finally the maps $p:Y_n\xrightarrow{(n)} X_0$, $i:X_n\r Y_n$ as in \eqref{eq:withmaps1} are given by
\begin{equation}
\label{eq:twisted2}
i=\begin{pmatrix} 0\\\vdots\\\Id_{X_n}\end{pmatrix}\qquad p=(\eta_{X_0,n,0},0,\ldots,0)
\end{equation}
\end{proposition}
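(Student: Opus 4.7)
The plan is to construct the Postnikov system inductively by peeling off the summands of $Y_n$ one at a time: each $Y_j$ will be identified with the cone of an explicit closed map $f_j:Y_{j-1}\to X_j$ in $\Tw\aa$, and this will provide the $j$-th distinguished triangle of \eqref{eq:dist} together with the commutativity conditions of \eqref{diag:postnikov}.

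First I would verify that each $\delta_{Y_j}$, defined as the appropriate upper-left block of $\delta$, satisfies the Maurer--Cartan equation, so that $Y_j\in\Ob(\Tw\aa)$. The key observation is that the MC sum $\sum_i b_i(s\delta,\ldots,s\delta)$ evaluated at a matrix entry $(a,b)$ with $a,b\le j$ receives contributions only from sequences $b=k_0<k_1<\cdots<k_i=a$ of matrix indices, all of which automatically lie in $\{0,\ldots,j\}$ since $\delta$ is strictly lower triangular. Hence the restriction of the MC equation to the upper-left $(j{+}1)\times(j{+}1)$ block depends only on that block, and the uniform shift by $\Sigma^{n-j}$ is absorbed by the compatibility of the $b_i$ with the strict endofunctor $\Sigma$ via \eqref{eq:sigmadef} and \eqref{eq:useful}.

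Next, the decomposition $Y_j=\Sigma Y_{j-1}\oplus X_j$ puts $\delta_{Y_j}$ in block form
\[
\delta_{Y_j}=\begin{pmatrix}\Sigma\delta_{Y_{j-1}} & 0\\ \tilde f_j & 0\end{pmatrix},
\]
where $\tilde f_j:\Sigma Y_{j-1}\to X_j$ is assembled from the bottom row. Comparing with \eqref{eq:standard1} identifies $Y_j$ with the cone $C(f_j)$ of $f_j:=\tilde f_j\,\eta_{Y_{j-1}}:Y_{j-1}\to X_j$, and the portion of the MC equation for $\delta_{Y_j}$ concentrated in the bottom row is precisely the statement that $\tilde f_j$, and hence $f_j$, is closed of degree zero in the twisted-differential sense. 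This yields the standard distinguished triangle $Y_{j-1}\xrightarrow{f_j}X_j\xrightarrow{i}Y_j\xrightarrow[(1)]{p}Y_{j-1}$ with $i,p$ as in \eqref{eq:iptwisted}, which is the $j$-th layer of the Postnikov system.

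Finally, to identify the induced complex I would compute the composition $X_{j-1}\hookrightarrow Y_{j-1}\xrightarrow{f_j}X_j$: it equals the last entry of $f_j$, that is, the $(j,j-1)$-entry of $\delta_{Y_j}$ sandwiched appropriately by $\eta$'s. Translating this entry back to a matrix entry of $\delta$ via $\Sigma^{n-j}$ and applying \eqref{eq:sigmadef} $(n-j)$ times (each application contributing a $-1$ since the entry has degree~$1$) produces the stated formula for $d_{j-1}$, including the sign $(-1)^{n-j}$. The global formulas \eqref{eq:twisted2} for $i$ and $p$ then follow by telescoping the successive inclusions and projections of \eqref{eq:iptwisted} along the Postnikov diagram. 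The substantive content is essentially that an MC element of cone shape automatically furnishes a closed attaching map; the main obstacle is the purely mechanical but delicate bookkeeping of Koszul signs through the iterated suspensions.
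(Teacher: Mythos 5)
Your proposal follows essentially the same route as the paper's proof: identify each $Y_j$ as the cone $C(f_j)$ of the closed map $f_j:Y_{j-1}\to X_j$ read off from the bottom row of $\delta_{Y_j}$ (your $\tilde f_j\,\eta_{Y_{j-1}}$ is exactly the paper's $f_j$), compute $i_{j-1}$ followed by $f_j$ by matrix multiplication, translate the resulting entry back through $\Sigma^{-(n-j)}$ via \eqref{eq:sigmadef} to get $d_{j-1}$ with its sign $(-1)^{n-j}$, and obtain $p$ by composing the $p_j$'s along the chain. The only cosmetic difference is that you spell out explicitly why the upper-left blocks of $\delta$ satisfy Maurer--Cartan, a point the paper takes as understood.
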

\begin{proof}
 We may write
\[
\delta_{Y_j}=
\begin{pmatrix}
\Sigma \delta_{Y_{j-1}}&0\\
f_j\eta_{Y_{j-1}}^{-1}&0
\end{pmatrix}
\]
where $f_j:(Y_{j-1},\delta_{Y_{j-1}})\r X_j$ is the closed map in $\Tw\aa$
with matrix
\[
((\delta_{Y_j})_{j,0}\eta_{\Sigma^{j-1} X_0},\ldots,(\delta_{Y_j})_{j,j-1} \eta_{X_{j-1}}).
\]

Clearly $Y_j=C(f_j)$ so that we have standard triangles
\begin{equation}
\label{eq:standardpostnikov}
Y_{j-1}\xrightarrow{f_j} X_j\xrightarrow{i_j} Y_j\xrightarrow[(1)]{p_j}Y_{j-1}
\end{equation}
where $(i_j,p_j)$ are as in \eqref{eq:iptwisted}. In particular $i=i_n$ is given by the formula \eqref{eq:twisted2}.
We compute the composition
\[
X_{j-1}\xrightarrow{i_{j-1}} Y_{j-1}\xrightarrow{f_j} X_j
\]
It is given by the matrix multiplications

\[
((\delta_{Y_j})_{j,0}\eta_{\Sigma^{j-1} X_0},\ldots,(\delta_{Y_j})_{j,j-1} \eta_{X_{j-1}})
\begin{pmatrix}
0\\
\vdots\\
0\\
\Id_{X_{j-1}}
\end{pmatrix}
=
(\delta_{Y_j})_{j,j-1}\eta_{X_{j-1}}
\]
which is equal to
$
(\Sigma^{-(n-j)} (\delta_{Y_n})_{j,j-1})\eta_{X_{j-1}}
$. One computes using \eqref{eq:sigmadef} that the latter expression is equal to $d_{j-1}$.

Finally to show $p$ is as in \eqref{eq:twisted2} we use
$
p=p_1\cdots p_{n-1}p_n
$
 by the description in Remark \ref{lem:intuition}. Then we use the formula \eqref{eq:iptwisted} for $p_j$.
\end{proof}
\subsection{Higher Toda brackets in $A_\infty$-categories}
We prove the following result.
\begin{theorem} \label{th:toda:Ainfty}
Let $\aa$ be a pre-triangulated $A_\infty$-category and let $X_0\xrightarrow{d_0}X_1\xrightarrow{d_1}\cdots\xrightarrow{d_{n-1}}X_n$
be a complex in $\Tscr=H^\ast(\aa)$. Assume the following conditions hold:
\begin{enumerate}
\item The $A_\infty$-subcategory of $\aa$ spanned by the objects $(X_i)_i$ is minimal (i.e.\ $b_1=0$).
\item 
$
\aa(X_i,X_j)_u=0 \qquad \text{for $-n+2<u<0$}
$.
\end{enumerate}
Using (1) we may regard $d_i$ as closed arrows in $\aa$. With this convention we have that
$\langle X^\bullet \rangle$ is the coset for $d_{n-1}\Tscr(X_0,X_{n-1})_{-n+2}+\Tscr(X_1,X_n)_{-n+2}d_0$
given by $\overline{s^{-1}b_n(sd_{n-1},\ldots,sd_0)}$.
\end{theorem}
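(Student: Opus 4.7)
The plan is to exhibit an explicit convolution of $X_1\to\cdots\to X_{n-1}$ inside $\Tw\aa$, then to exhibit specific lifts $\alpha$, $\beta$ as in \eqref{diag:toda} and compute $\beta\alpha$ directly. First I would verify that condition (2) implies both hypotheses \eqref{eq:todaexistence} and \eqref{toda:coset}: for any $a\le b$ with $b-a\in[3,n-1]$ the degree $-(b-a)+2$ lies in the open interval $(-n+2,0)$, so $\Tscr(X_a,X_b)_{-(b-a)+2}=0$, and similarly for \eqref{toda:coset}. Hence $\langle X^\bullet\rangle$ is a non-empty coset of $d_{n-1}\Tscr(X_0,X_{n-1})_{-n+2}+\Tscr(X_1,X_n)_{-n+2}d_0$, and it will suffice to exhibit one element of it of the required form.

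Next I would build the convolution. Set
\[
Y=(\Sigma^{n-2}X_1\oplus\Sigma^{n-3}X_2\oplus\cdots\oplus X_{n-1},\delta),
\]
with $\delta$ chosen so that the only non-zero entries are subdiagonal and correspond to the maps $d_i$ (after applying the rescaling of Proposition~\ref{prop:postnikovtwisted}). To check that $\delta$ is a Maurer--Cartan element I would verify $\sum_{k\ge 1}b_k(s\delta,\ldots,s\delta)=0$ entry by entry. Since $\aa$ is minimal on the $X_i$'s the $k=1$ term vanishes; the $k=2$ term gives $\pm m_2(d_{i+1},d_i)$ which vanishes because $X^\bullet$ is a complex in the minimal $A_\infty$-subcategory (so compositions are zero on the nose, not only up to boundaries); and for $3\le k\le n-1$ the only entries that could appear are of the form $m_k(d_{i+k-1},\ldots,d_i)\in\aa(X_i,X_{i+k})_{2-k}$, which vanishes by condition~(2) since $-n+2<2-k<0$. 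The top-corner entry $m_n(d_{n-1},\ldots,d_0)$ does \emph{not} appear in the MC equation for $Y$ because $Y$ only contains $X_1,\ldots,X_{n-1}$. Proposition~\ref{prop:postnikovtwisted} then realizes $Y$ as a convolution with the $d_i$ as attaching maps and provides the explicit formulas for $i:X_{n-1}\to Y$ and $p:Y\xrightarrow{(n-2)}X_1$.

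Now I would exhibit $\alpha:X_0\to Y$ of degree $-n+2$ lifting $d_0$, and $\beta:Y\to X_n$ of degree $0$ extending $d_{n-1}$. Write $\alpha=(\alpha_1,\ldots,\alpha_{n-1})^T$ with $\alpha_i\in\aa(X_0,X_i)_{-i+1}$ and $\beta=(\beta_1,\ldots,\beta_{n-1})$ with $\beta_j\in\aa(X_j,X_n)_{j-n+1}$. Condition (2) forces $\alpha_i=0$ for $2\le i\le n-2$ and $\beta_j=0$ for $2\le j\le n-2$. Up to the usual $\eta$-rescalings I can take $\alpha_1$ to be the image of $d_0$ and $\beta_{n-1}$ to be $d_{n-1}$; the remaining entries $\alpha_{n-1}\in\aa(X_0,X_{n-1})_{-n+2}$ and $\beta_1\in\aa(X_1,X_n)_{-n+2}$ are free. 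Closedness of $\alpha$ and $\beta$ under the twisted $b_1^{\tw}$ reduces (after expanding $\sum_k b_{1+k}(s\alpha,s\delta,\ldots)$ and the analogous sum for $\beta$) to identities that hold by the same degree-counting: either the only surviving contributions are $m_2(d_{i+1},d_i)=0$, or they lie in vanishing degree pieces of $\aa$.

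Finally, to compute $\beta\alpha=m_2^{\tw}(\beta,\alpha)=\sum_{k\ge 0}m_{2+k}(\beta,\underbrace{\delta,\ldots,\delta}_{k},\alpha)$, observe that a non-zero contribution requires the chain of $\delta$'s to bridge the column of $\alpha$'s non-zero entry to the row of $\beta$'s non-zero entry. Since $\delta$ is strictly subdiagonal, the only surviving configurations are: (i) $k=0$ with $\alpha_1,\beta_1$ giving $\pm\beta_1 d_0\in\Tscr(X_1,X_n)_{-n+2}d_0$; (ii) $k=0$ with $\alpha_{n-1},\beta_{n-1}$ giving $\pm d_{n-1}\alpha_{n-1}\in d_{n-1}\Tscr(X_0,X_{n-1})_{-n+2}$; and (iii) $k=n-2$ with $\alpha_1,\beta_{n-1}$ and one $\delta$ at each subdiagonal position, producing $\pm m_n(d_{n-1},d_{n-2},\ldots,d_0)$ after clearing $\eta$'s via \eqref{eq:useful} and \eqref{eq:sigmadef}. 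Translating back to $b_n$ via \eqref{eq:signs}, the class of this term in $\Tscr$ is $\overline{s^{-1}b_n(sd_{n-1},\ldots,sd_0)}$, proving the theorem. The main bookkeeping obstacle is managing the Koszul signs and the powers of $\eta$ in step (iii), and making the choices in the closedness analysis so that no additional boundary terms are accidentally picked up; everything else is forced by condition (2).
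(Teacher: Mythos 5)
Your proposal is correct and follows essentially the same route as the paper: compute in $\Tw\aa$, realize the convolution of $X_1\to\cdots\to X_{n-1}$ as the twisted complex $Y=(\Sigma^{n-2}X_1\oplus\cdots\oplus X_{n-1},\delta)$ via Proposition \ref{prop:postnikovtwisted}, choose the same $\alpha$ and $\beta$ (the paper simply sets the ``free'' entries $\alpha_{n-1}$, $\beta_1$ to zero), and evaluate $b^{\Tw}_2(s\beta,s\alpha)=b_{\aa,n}(sd_{n-1},\ldots,sd_0)$ using \eqref{eq:useful}. Your extra verifications of the Maurer--Cartan equation and of closedness of $\alpha,\beta$ are correct bookkeeping that the paper leaves implicit.
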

\begin{proof}
Since \eqref{eq:todaexistence} and \eqref{toda:coset} hold it is sufficient to produce a single element of $\langle X^\bullet\rangle$.
Since higher Toda brackets are obvously invariant under equivalences of triangulated categories we may perform the calculation in $\Tw\aa$.
We start with the Postnikov system built on $X_1\xrightarrow{d_1}X_2\xrightarrow{d_2}\cdots \xrightarrow{d_{n-2}} X_{n-1}$.  By 
Proposition \ref{prop:postnikovtwisted} it is obtained from the twisted complex
\[
Y=(\Sigma^{n-2}X_1\oplus\cdots\oplus X_{n-1},\delta)
\]
where the only non-zero entries of $\delta$ are $\delta_{j,j-1}$ for $j=2,\ldots,n-1$ and $\delta_{j,j-1}$ is given by
\[
 \delta_{j,j-1}=(-1)^{n-1-j} \eta_{X_j,0,n-1-j}\cdot d_{j-1} \cdot\eta_{X_{j-1},n-j,0} 
\]
Using the formulas for $i$ and $p$ (see \eqref{eq:twisted2}) it is then easy to see that we may take
\[
\alpha=\begin{pmatrix} \eta_{X_1,0,n-2}d_0\\\vdots\\0\end{pmatrix}\qquad  \beta=(0,0,\ldots,d_{n-1})
\]
Then $\overline{m_{\Tw\aa,2}(\beta,\alpha)}\in \langle X^\bullet\rangle$. It will be more convenient to compute $b_{\Tw\aa,2}(s\beta,s\alpha)=sm_{\Tw\aa,2}(\beta,\alpha)$. We have
\begin{align*}
  b_{\Tw\aa,2}(s\beta,s\alpha)&=b_{\aa,n}(s\beta,\underbrace{s\delta,\ldots,s\delta}_{n-2},s\alpha)\\
&=b_{\aa,n}(sd_{n-1},sd_{n-2} \cdot\eta_{X_{n-2},1,0},
\ldots,(-1)^{n-1-j}s\eta_{X_j,0,n-1-j}\cdot d_{j-1} \cdot\eta_{X_{j-1},n-j,0} ,\ldots,\\
&(-1)^{n-3}s\eta_{X_2,0,n-3}\cdot d_{1} \cdot\eta_{X_{1},n-2,0} ,
s\eta_{X_1,0,n-2}d_0)\\
&=b_{\aa,n}(sd_{n-1},sd_{n-2},\ldots, s d_{1} ,sd_0)
\end{align*}
where in the last line we have used \eqref{eq:useful}.
\end{proof}

\section{Triangulated categories without models}
\label{sec:nomodel}
If $\Ascr$ is a triangulated category then an $A_\infty$-enhancement on $\Ascr$ is a pair consisting of a pre-triangulated $A_\infty$-category $\aa$ 
such that $\Ob(\aa)=\Ob(\Ascr)$ and an isomorphism of triangulated categories
$H^\ast(\aa)\r \Ascr$ inducing the identity on objects.
The following proposition will be the basis for constructing a triangulated category that does not admit an $A_\infty$-enhancement.
\begin{proposition}\label{prop:no lift if strong unique enhancement}
Let $\aa$, $\bb$ be pre-triangulated $A_{\infty}$-categories. Suppose we have an $A_n$- functor $F:\aa\to \bb$ for $n\ge 13$ such that 
$H^\ast(F)$ does not lift to an $A_{\infty}$-functor for any $A_\infty$-enhancements on $H^\ast(\aa)$, $H^\ast(\bb)$.
Let $\cc$ be the gluing category $\cc=\aa \coprod_M \bb$ where $M={}_F\bb$
(see \S\ref{sec:bimodules}).
Then $H^\ast(\cc)$ is a triangulated category which does not admit an $A_{\infty}$-enhancement.
\end{proposition}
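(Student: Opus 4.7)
The plan is to argue by contradiction. First I would verify that $H^\ast(\cc)$ is triangulated. Since $\aa$ and $\bb$ are pre-triangulated $A_\infty$-categories and $M={}_F\bb$ is the bimodule obtained by pulling back the identity $\bb$-bimodule along the $A_n$-functor $F$ (so $M$ inherits precisely the kind of $A_n$-bimodule structure required by Theorem \ref{th:maingluing}), that theorem produces a pre-triangulated $A_{n-1}$-category $\cc$. With $n$ taken large enough that $n-1\ge 13$, Theorem \ref{th:mainth} then equips $H^\ast(\cc)$ with its canonical triangulated structure, and Theorem \ref{th:maingluing} simultaneously furnishes the semi-orthogonal decomposition $H^\ast(\cc)=\langle H^\ast(\aa),H^\ast(\bb)\rangle$ whose associated bimodule is $H^\ast(M)$, i.e.\ $H^\ast(\cc)(A,B)\cong H^\ast(\bb)(H^\ast(F)(A),B)$ for $A\in\Ob(\aa)$ and $B\in\Ob(\bb)$.

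For the contradiction, suppose $H^\ast(\cc)$ admits an $A_\infty$-enhancement $\cc'$. Let $\aa',\bb'\subset\cc'$ denote the full $A_\infty$-subcategories on the objects of $\aa$ and $\bb$ respectively. Since $H^\ast(\aa)$ and $H^\ast(\bb)$ are the factors of a semi-orthogonal decomposition of the triangulated category $H^\ast(\cc')=H^\ast(\cc)$, they are triangulated subcategories, and hence $\aa'$ and $\bb'$ are pre-triangulated $A_\infty$-enhancements of $H^\ast(\aa)$ and $H^\ast(\bb)$ respectively. Define an $A_\infty$-$\bb'$-$\aa'$-bimodule $N$ by $N(A,B)=\cc'(A,B)$ for $A\in\Ob(\aa')$, $B\in\Ob(\bb')$, with higher structure inherited from $\cc'$. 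By construction we have an isomorphism $H^\ast(N)\cong H^\ast(M)$ of graded bimodules.

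The key step, and the one I expect to be the main obstacle, is to extract from $N$ an honest $A_\infty$-functor $F':\aa'\to\bb'$ with $H^\ast(F')=H^\ast(F)$. The point is that $N$ is representable on cohomology: for every $A\in\Ob(\aa')$, the right $H^\ast(\bb')$-module $H^\ast(N)(A,-)$ is represented by $H^\ast(F)(A)\in\Ob(\bb')$. An $A_\infty$-Yoneda-type rigidification argument should then promote this objectwise quasi-representability to a coherent $A_\infty$-functor $F':\aa'\to\bb'$ together with a bimodule quasi-isomorphism $N\simeq{}_{F'}\bb'$; such an $F'$ automatically satisfies $H^\ast(F')=H^\ast(F)$. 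But this $F'$ is precisely a lift of $H^\ast(F)$ to an $A_\infty$-functor between $A_\infty$-enhancements of $H^\ast(\aa)$ and $H^\ast(\bb)$, contradicting the hypothesis placed on $F$. Hence no $A_\infty$-enhancement of $H^\ast(\cc)$ can exist.
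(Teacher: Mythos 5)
Your proof follows essentially the same strategy as the paper's: verify triangulation via the gluing and pre-triangulation theorems, then derive a contradiction by showing that an enhancement $\cc'$ would induce enhancements $\aa'$, $\bb'$ and a bimodule $N$ that is ``quasi-representable'' on cohomology, hence (via a Yoneda rigidification) an $A_\infty$-functor $F'$ lifting $H^\ast(F)$. The rigidification step you flag as the main obstacle is exactly the paper's notion of a \emph{co-quasi-functor} together with Lemma \ref{lem:Ainftylift}, which is proved using the $A_\infty$-Yoneda embedding and Lemma \ref{lem:quasi-inverse}, so your instinct there is right and the gap is fillable.

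One small but real slip: you claim $M={}_F\bb$ ``inherits precisely the kind of $A_n$-bimodule structure required by Theorem \ref{th:maingluing},'' and so conclude that $\cc$ is only $A_{n-1}$, forcing you to assume $n-1\ge 13$. In fact, pulling back the identity $A_\infty$-bimodule $\bb$ along an $A_n$-functor $F$ produces an $A_{n+1}$-bimodule (see \S\ref{sec:bimodules}), so Theorem \ref{th:maingluing} applied with that arity gives $\cc$ as a pre-triangulated $A_n$-category. This off-by-one matters exactly at the boundary case $n=13$ of the hypothesis, which your version would not cover; with the corrected count, $n\ge 13$ suffices as stated.
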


\begin{proof}
By the discussion in \S\ref{sec:bimodules}, $M$ is an $A_{14}$-bimodule. Therefore by Theorem \ref{th:maingluing}, $\cc$ is a pre-triangulated $A_{13}$-category. Hence
by Theorem \ref{th:mainth}, $H^\ast(\cc)$ is triangulated.

  Suppose that an $A_\infty$-enhancement $\dd$ on $H^\ast(\cc)$ exists. Since $H^\ast(\aa)$, $H^\ast(\bb)$
  are full exact subcategories of $H^\ast(\cc)$ (see Theorem \ref{th:maingluing}), it follows that the
  $A_{\infty}$-structure on $\dd$ induces $A_{\infty}$-enhancements  $\aa'$, $\bb'$,
 on 
   $H^\ast(\aa)$ and $H^\ast(\bb)$. By
$H^\ast(\dd)\cong H^\ast(\cc)$
it follows that $\dd(A,B)_{A,B}$ for $A\in \Ob(H^\ast(\aa))$, $B\in \Ob(H^\ast(\bb))$ defines a $A_\infty$-$\bb'$-$\aa'$-bimodule which is a co-quasi-functor in the sense of \S\ref{sec:coquasi} below.
Hence by Lemma \ref{lem:Ainftylift}, $\dd$ induces an $A_\infty$-functor
  $F': \aa' \to \bb'$ such that $H^\ast(F')\cong F$. This contradicts the hypotheses on~$F$.
\end{proof}
\begin{remark} The idea of creating a triangulated category without model by gluing a non-enhanceable functor was suggested to us by Bondal and Orlov on a number of occasions. 
In fact, the idea of translating an enhancement of the glued category into a $A_\infty$-enhancement of the gluing functor, thereby obtaining a contradiction, was specifically suggested to us by Orlov.
\end{remark}
\subsection{Co-quasi-functors}
\label{sec:coquasi}
To fill in a missing ingredient in the proof of Proposition \ref{prop:no lift if strong unique enhancement}
we use an $A_\infty$-version
of the notion of a (co)-quasi-functor (see \cite{Keller1}).
In the rest of this section we assume that $\aa$, $\bb$ are $A_\infty$-categories.
\begin{definition} An $A_\infty$-$\bb$-$\aa$-bimodule $M$ is a \emph{co-quasi-functor} $\aa\r \bb$ if for every object
$A\in \Ob(\aa)$ there exists  $f A\in \Ob(\bb)$ together with an element
 $\bar{\phi}_A\in (H^\ast M)(A,fA)_0$ inducing an isomorphism for all $B\in \Ob(\bb)$:
$
\tilde{\phi}_A:H^\ast(\bb)(fA,B)\r (H^\ast M)(A,B):u\mapsto u\bar{\phi}_A 
$.
\end{definition}
It is clear from the definition that being a co-quasi-functor depends only on the structure of $H^\ast M$ as graded
$H^\ast(\aa)-H^\ast(\bb)$-bimodule. A co-quasi-functor induces an actual functor $f^\circ:H^\ast(\aa)\r H^\ast(\bb)$.
Indeed for $u:A\r A'$ in $H^\ast(\aa)$, $f^\circ u:fA\r fA'$ is defined to be the unique morphism such that
$H^\ast(\bb)(f^\circ u,-)$ is the composition 
$H^\ast (\bb)(fA',-)\xrightarrow[\cong]{\tilde{\phi}_{A'}} (H^\ast M)(A',-)\xrightarrow{H^\ast M(f,-)} 
(H^\ast M)(A,-)\xrightarrow[\cong]{\tilde{\phi}^{-1}_A} H^\ast(\bb)(fA,-)$. Moreover is is clear that different choices of $(\phi_A,f^\circ A)$ lead to 
naturally isomorphic functors.
\begin{lemma} \label{lem:Ainftylift}
Assume that $M$ is a co-quasi-functor $\aa\r \bb$
and let $f^\circ:H^\ast(\aa)\r H^\ast(\bb)$ be the induced functor as explained above. Then there exists an $A_\infty$-functor
$f:\aa\r \bb$ such that $H^\ast(f)=f^\circ$.
\end{lemma}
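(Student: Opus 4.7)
The plan is to construct $f$ jointly with an $A_\infty$-$\bb$-$\aa$-bimodule morphism
\[
\psi : {}_f\bb \longrightarrow M
\]
which is a quasi-isomorphism and satisfies $\psi_{0}(\id_{fA}) = \phi_A$, where $\phi_A \in Z^0 M(A,fA)$ is a chosen closed representative of $\bar\phi_A$. Here ${}_f\bb$ is the pull-back bimodule $(A,B) \mapsto \bb(fA,B)$ from \S\ref{sec:bimodules}, whose higher operations incorporate the Taylor components of $f$. Once such a pair $(f,\psi)$ is produced, the identification $H^\ast(f) = f^\circ$ is automatic: $H^\ast(\psi)$ is an isomorphism of graded $H^\ast(\aa)$-$H^\ast(\bb)$-bimodules sending $\overline{\id_{fA}}$ to $\bar\phi_A$, and the latter is precisely the datum used to define $f^\circ$ on morphisms.

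On objects I set $f(A) := fA$, and I set $\psi_{0}(\id_{fA}) := \phi_A$, extending by $\bb(fA,fA)$-linearity. The higher components of $f$ and the higher Taylor coefficients of $\psi$ are then built by a simultaneous induction on arity. At each stage the $A_\infty$-functor equations for $f$ together with the $A_\infty$-bimodule morphism equations for $\psi$ produce, at the next arity, a cocycle in an appropriate deformation complex of maps into $M$. The crucial input is that the co-quasi-functor hypothesis makes the order-zero component of $\psi$ — right multiplication by $\phi_A$ — a quasi-isomorphism of right $A_\infty$-$\bb$-modules $\bb(fA,-) \to M(A,-)$. A standard argument shows that such quasi-isomorphism renders the obstruction complex acyclic, so the cocycles are coboundaries and the required extensions of $f_n$ and of the new Taylor coefficients $\psi_{p,q}$ with $p+q = n-1$ can be chosen.

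The main obstacle is the bookkeeping. The simultaneous induction must be arranged so that the obstruction to defining $f_n$ at stage $n$ genuinely lands in a cohomology group of the right $A_\infty$-$\bb$-module $M(A,-)$, which is where the co-quasi-functor hypothesis enters; the corresponding obstructions for the new $\psi$-coefficients must be set up compatibly so that the same acyclicity kills both. This is the $A_\infty$-version of a familiar fact from DG-algebra, namely that a quasi-isomorphism of right modules can be lifted through the homological perturbation lemma to identify the left actions, the subtlety here being that the left action to be identified is itself the $A_\infty$-functor $f$ that we are constructing. Alternatively, one may phrase the argument via the bar construction: the data of $(f,\psi)$ amounts to a lift of a DG-coalgebra map against a trivial fibration of DG-$B\bb$-comodules induced by the quasi-isomorphism $\bb(fA,-) \to M(A,-)$.
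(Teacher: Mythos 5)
Your argument is correct but takes a genuinely different route from the paper. The paper reduces everything to two black boxes: the $A_\infty$-Yoneda embedding $Y:\bb\to \Cscr^l_\infty(\bb)^\circ$ and the freely-prescribable quasi-inverse of Lemma~\ref{lem:quasi-inverse}. Concretely, $M$ defines an $A_\infty$-functor $F:\aa\to\Cscr^l_\infty(\bb)^\circ$, $A\mapsto M(A,-)$; the co-quasi-functor hypothesis makes its image land in the full subcategory $\tilde\bb$ of modules quasi-isomorphic to corepresentables, onto which $Y$ corestricts to a quasi-equivalence $Y^c$; one then chooses a quasi-inverse $W:\tilde\bb\to\bb$ whose action on cohomology realizes $f^\circ$ (using the last sentence of Lemma~\ref{lem:quasi-inverse}), and sets $f=WF^c$. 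You instead run the obstruction theory in place, constructing $f$ simultaneously with a bimodule quasi-isomorphism $\psi:{}_f\bb\to M$ --- the very thing the paper's subsequent Remark asserts exists but does not use. Your key point, that the co-quasi-functor condition makes $\psi_0:\bb(fA,-)\to M(A,-)$ a quasi-isomorphism and hence renders the combined $(f_n,\psi_\ast)$ obstruction complex acyclic, is sound: the obstruction to $f_n$ lands (up to $[b_1,-]$) in $\Hom(\Sigma\aa^{\otimes n},\Sigma\bb(f{-},f{-}))$, and post-composition with $\psi_0$ is a quasi-isomorphism to the corresponding complex valued in $M$, where the $\psi$-equations kill the class. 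This is essentially the same step-by-step argument that the paper outsources to Lemma~\ref{lem:quasi-inverse}, so the two proofs are close in substance but differ in packaging: the paper's route is more modular and avoids redoing the induction, while yours is more self-contained and in fact establishes the stronger statement ${}_f\bb\simeq M$. Be warned, though, that the sentence ``a standard argument shows that such quasi-isomorphism renders the obstruction complex acyclic'' is carrying real weight: to make it precise one must set up the combined complex for the pair $(f,\psi)$ as a mapping cone and verify the arity bookkeeping matches, which is the kind of computation the paper deliberately sidesteps by going through Yoneda.
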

\begin{proof}
Let $\Cscr^l_\infty(\bb)$ be the DG-category of strictly unital left $A_\infty$-$\bb$-modules \cite[Chapitre 5]{Lefevre}
and let $Y:\bb\r  \Cscr^l_\infty(\bb)^\circ:B\mapsto \bb(B,-)$ be the Yoneda embedding.  Furthermore let $\tilde{\bb}\subset \Cscr^l_\infty(\bb)^\circ$ be the full subcategory spanned by $A_\infty$-modules $M$ which are $A_\infty$-quasi-isomorphic to some $\bb$-module
of the form $\bb(B,-)$.
 Clearly we have that $Y$ corestricts to an $A_\infty$-quasi-equivalence $Y^c:\bb\r \tilde{\bb}$. 
Since $M$ is a co-quasi-functor the
 image of the $A_\infty$-functor $F:\aa\r \Cscr^l_\infty(\bb)^\circ:A\mapsto M(A,-)$ lies in $\tilde{\bb}$. Let $F^c:\aa\r\tilde{\bb}$ be the corestriction of $F$.

Choose
an $A_\infty$-quasi-inverse $W:\tilde{\bb} \r \bb$ to the quasi-equivalence $Y^c:\bb\r \tilde{\bb}$ which sends $M(A,-)$ to $fA$ for $A\in \Ob \aa$ and $u$ to (a representative of) $f^\circ u$ for $u:A\r A'$ a closed map in $\aa$. By Lemma \ref{lem:quasi-inverse}
this is possible.
Then one
easily verifies that $f^\circ=H^\ast(F^c W)$.
\end{proof}
\begin{remark}
It is also easy to prove that we have an quasi-isomorphism of $A_\infty$-bimodules ${}_f\bb\cong M$. However we will not need this.
\end{remark}
\subsection{Localization of triangulated categories}
\label{sec:loc}
The following result is well-known, although we did not find the precise statement we require.
Since the proof is short we include it for the convenience of the reader. 
\begin{proposition} 
\label{prop:loc} Let $\Tscr$ be a triangulated category admitting
  arbitrary coproducts and let $T\in \Ob(\Tscr)$ be a compact
  generator for $\Tscr$. Let $S\subset \Tscr(T,T)$ be a graded right
  Ore set and let $\Tscr_S$ be the full subcategory of $\Tscr$ spanned by the objects $X$ such that $\Tscr(s,X)$ is an isomorphism for all $s\in S$, or equivalently the objects for which
\begin{equation}
\label{eq:ore1}
\Tscr(T,X)\r \Tscr(T,X)_S
\end{equation}
is an isomorphism.
Then $\Tscr_S$ is a triangulated
subcategory of $\Tscr$ and moreover the inclusion functor $\Tscr_S\r \Tscr$ has a left adjoint, denoted by $(-)_S$ such that for $Y\in \Ob(\Tscr)$  the induced map
\[
(-)_S:\Tscr(T,Y)\r \Tscr_S(T_S,Y_S)
\]
factors uniquely through an isomorphism 
\begin{equation}
\label{eq:factor}
\Tscr(T,Y)_S\cong \Tscr_S(T_S,Y_S).
\end{equation}
\end{proposition}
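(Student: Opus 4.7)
The plan is to realize $\Tscr_S$ as the right orthogonal to a localizing subcategory generated by compact objects, and then invoke Neeman's localization theorem (equivalently, Brown representability for compactly generated triangulated categories) to construct the left adjoint. I would begin by observing that for each $s\in S$ with $s:T\r\Sigma^{|s|}T$, the cone $C(s)$ is compact (cones of morphisms between compact objects are compact), and applying $\Tscr(-,X)$ to the triangle $T\xrightarrow{s}\Sigma^{|s|}T\r C(s)\r\Sigma T$ shows that $\Tscr(C(s),X)=0$ if and only if $\Tscr(s,X)$ is an isomorphism. Hence $\Tscr_S=\Escr^\perp$ for $\Escr=\{C(s):s\in S\}$, and in fact $\Tscr_S=\Lscr^\perp$ where $\Lscr$ is the localizing subcategory generated by $\Escr$ (right orthogonals are automatically closed under the operations defining $\Lscr$). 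In particular, $\Tscr_S$ is a triangulated subcategory. The equivalence with the Ore condition is then a standard unwinding: $\Tscr(T,X)\r\Tscr(T,X)_S$ is an iso exactly when right multiplication by every $s\in S$ on the graded $\Tscr(T,T)$-module $\Tscr(T,X)$ is invertible, which is what the orthogonality condition asserts.

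Next, invoking Neeman's localization theorem, since $\Lscr$ is generated by a set of compact objects in a compactly generated triangulated category, the inclusion $\Tscr_S\hookrightarrow \Tscr$ admits a left adjoint $(-)_S$; moreover, for each $Y$, the unit $\eta_Y:Y\r Y_S$ sits in a distinguished triangle $E_Y\r Y\xrightarrow{\eta_Y} Y_S\r \Sigma E_Y$ with $E_Y\in \Lscr$.

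To identify $\Tscr(T,Y)_S$ with $\Tscr_S(T_S,Y_S)$, I would argue as follows. Via adjunction, $\Tscr_S(T_S,Y_S)=\Tscr(T,Y_S)$, and the canonical map sends $f$ to $\eta_Y\circ f$. Since $Y_S\in \Tscr_S$, right multiplication by any $s\in S$ acts invertibly on $\Tscr(T,Y_S)$, so by the universal property of Ore localization this map factors uniquely through $\Tscr(T,Y)_S$. To check the resulting map $\Tscr(T,Y)_S\r\Tscr(T,Y_S)$ is an isomorphism, the key step is to show $\Tscr(T,E)_S=0$ for every $E\in \Lscr$. For $E=C(s)$ this follows from localizing the long exact sequence obtained from the defining triangle: the adjacent maps become isomorphisms after inverting $s$, forcing the middle term to vanish. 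The class of $E$ with $\Tscr(T,E)_S=0$ is closed under shifts, cones, and coproducts (using compactness of $T$ and the exactness of Ore localization), hence contains $\Lscr$. Applying $\Tscr(T,-)$ to $E_Y\r Y\r Y_S$ and localizing then gives the desired isomorphism, since the two outer terms vanish and $Y_S\in \Tscr_S$ makes $\Tscr(T,Y_S)\r\Tscr(T,Y_S)_S$ an iso.

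The principal technical input is Neeman's localization theorem, which rests on Brown representability; once the localization triangle is in hand, the rest is a bookkeeping calculation with long exact sequences and the exactness of graded Ore localization. The one point I expect to require care is confirming that the closure properties of $\{E:\Tscr(T,E)_S=0\}$ really extend to all of $\Lscr$ (not merely its thick closure), which is where compactness of $T$ enters to handle arbitrary coproducts.
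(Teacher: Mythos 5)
Your proof is correct and follows essentially the same route as the paper: identify $\Tscr_S$ as the right orthogonal of the localizing subcategory generated by the compact objects $C(s)$, invoke Brown representability (Neeman's localization theorem) to get the Bousfield localization triangle $E_Y\to Y\to Y_S\to$, and then localize the resulting long exact sequence after showing $\Tscr(T,E)_S=0$ for $E$ in that localizing subcategory. The only cosmetic difference is that the paper defines $\Cscr$ directly by the vanishing condition $\Tscr(T,X)_S=0$ and then shows it coincides with $\langle C(s)\rangle_{s\in S}$, whereas you define $\Lscr$ as the latter and verify just the one inclusion you actually need — slightly leaner bookkeeping, same substance.
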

\begin{proof} The fact that $\Tscr_S$ is triangulated follows trivially from the 5-lemma.
Let us now discuss the existence of the adjoint. Let $\Cscr$ be the full subcategory of
$\Tscr$ spanned by objects $X$ such that all morphisms $T\r X$ (not necessarily of degree zero)
are annihilated after composing with some $s:T\r T\in S$, or equivalently
\begin{equation}
\label{eq:ore2}
\Tscr(T,X)_S=0
\end{equation}
It is clear that $\Cscr$ is triangulated and closed under arbitrary coproducts (the latter by the compactness of $T$).

For $s\in S$ let $C(s)$ be the cone of the morphism $s:T\r \Sigma^{|s|}T$. It is clear
that $\Tscr_S=\langle C(s)_{s\in S}\rangle^\perp$.
By the Ore condition on $S$ the objects $C(s)$ are in $\Cscr$. Moreover as 
$\langle C(s)_{s\in S}\rangle^\perp\cap \Cscr=\Tscr_S\cap \Cscr$ and it is easy to see that $
\Tscr_S\cap \Cscr=0$, we obtain that $\Cscr$ is in fact generated by $\langle C(s)_{s\in S}\rangle$. This
yields $\Cscr^\perp=\Tscr_S$.

Hence in particular $\Cscr$ is compactly generated
and using the Brown representability theorem we obtain
that the inclusion functor $\Cscr\r \Tscr$ has a right adjoint $U:\Tscr\r \Cscr$ such 
that every $X\in \Tscr$ fits in a unique distinguished triangle
\begin{equation}
\label{eq:locsec}
UX\r X\r VX\r 
\end{equation}
where $VX\in \Cscr^\perp=\Tscr_S$. It follows easily that $X\r VX$ is a functor $\Tscr\r\Cscr^\perp=\Tscr_S$. Applying $\Tscr(-,Z)$ for $Z\in \Tscr_S$ to \eqref{eq:locsec} we obtain that
$V$ is the sought left adjoint $(-)_S$ to the inclusion $\Tscr_S\r \Tscr$.

Finally we discuss the formula \eqref{eq:factor}. As $\cone(Y\r Y_S)=\Sigma UY\in \Cscr$ we have $\Tscr(T,\cone(Y\r Y_S))_S=0$ by \eqref{eq:ore2}. Hence
$(-)_S$ induces an isomorphism $\Tscr(T,Y)_S\xrightarrow{\cong} \Tscr(T,Y_S)_S\overset{\eqref{eq:ore1}}{=}\Tscr(T,Y_S)=\Tscr_S(T_S,Y_S)$ where the last equality is adjointness.
\end{proof}
\subsection{A non-enhanceble functor}
\label{sec:noenhance}
Now let $k$ be either a field of characteristic zero or an infinite field of characteristic $>n\ge 3$.
Put $R=k[x_1,\ldots,x_n]$ and let $K$ be the quotient field of $R$. 
Furthermore let $R[\varepsilon]$ be the $R$-linear DG-algebra with $|\varepsilon|=-n+2$, $\varepsilon^2=0$, $d\varepsilon=0$. 
Let $C(R,R)$ be the Hochschild complex of $R$ and  let $\HH^n(R,R)=H^n(C(R,R))$. Let $T^n_{R/k}=\wedge^n_R\Der_k(R,R)$.
The HKR theorem gives an inclusion $T^n_{R/k}\subset Z^n C(R,R)$ which induces an isomorphism $T^n_{R/k}\cong \HH^n(K,K)$.
For $\eta\in T^n_{R/k}$   we let $R_\eta$ be the $k[\varepsilon]$-linear
$A_\infty$-deformation of
$R[\varepsilon]$  whose only higher multiplication is given by $\varepsilon\eta$.

\medskip

As above, for an $A_\infty$-algebra $A$ let $\Cscr^r_\infty(A)$ be the DG-category of strictly unital right $A_\infty$-modules over $A$ \cite{Lefevre}. We put $D(A)=H^\ast(\Cscr^r_\infty(A))$. This is one of the many realizations for
the derived category of an $A_\infty$-algebra (see \cite[Th\'eor\`eme 4.1.3.1(D2)]{Lefevre}) for which we consider $\Cscr^r_\infty(A)$ to be its standard enhancement.
\begin{remark}
\label{rem:trouble}
$A$ is an $A$-$A$-bimodule and hence the left $A$-action on  $A$ defines $A_\infty$-quasi-isomorphism (see \cite[Lemma 5.3.0.1]{Lefevre})
$A\r \Cscr^r_\infty(A)(A,A)$ which is however not an isomorphism.
\end{remark}
\begin{proposition} \label{lem:enhancement}
Assume $\Tscr$ is a triangulated category with arbitrary coproducts and $T$ is compact generator of $\Tscr$ such that $\Tscr(T,T)=R[\varepsilon]$. 
Assume $\aa$  is some $A_\infty$-enhancement of $\Tscr$.
Then there is an $A_\infty$-quasi-equivalence
$\aa\cong \Cscr^r_\infty(R_{\eta})$ for a suitable $\eta\in T^n_{R/k}$ which sends $T$ to an object isomorphic to $R_\eta$ 
in $H^*(\Cscr^r_\infty(R_\eta))=D(R_\eta)$
such 
that the induced map $R[\varepsilon]=\Tscr(T,T)\cong  D(R_\eta)(R_\eta,R_\eta)=R[\varepsilon]$ is the identity. 
Moreover $\eta$ is uniquely determined by the triangulated structure on $\Tscr$ and in particular is independent of the chosen quasi-equivalence.
\end{proposition}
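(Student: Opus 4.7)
The plan is to extract $\eta$ from any $A_\infty$-enhancement $\aa$ by transferring $\End_\aa(T)$ to a minimal model on $R[\varepsilon]$, isolating the only non-trivial higher operation via a degree count, identifying it with a Hochschild cocycle whose HKR class is $\eta$, and then recovering $\eta$ from the triangulated structure of $\Tscr$ alone via higher Toda brackets.

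I will first apply Kadeishvili's minimal model theorem to $E = \End_\aa(T)$, producing a minimal $A_\infty$-algebra $E^{\min}$ with underlying graded vector space $H^\ast(E)=R[\varepsilon]$, $m_1=0$, and $m_2$ the standard product. The crucial input is a degree count: the only nonzero homogeneous components of $R[\varepsilon]$ live in degrees $0$ (giving $R$) and $2-n$ (giving $R\varepsilon$), so for an operation $m_i$ of cohomological degree $2-i$ with $k$ inputs taken from $R\varepsilon$ the output degree $k(2-n)+2-i$ must lie in $\{0,\,2-n\}$. An elementary case analysis shows that, aside from $m_2$, the only higher operation that can be nonzero is $m_n: R^{\otimes n} \to R\varepsilon$.

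The $A_\infty$-relation coupling $m_n$ with $m_2$ is, after identifying $R\varepsilon \cong R$, precisely the Hochschild cocycle equation on $m_n \in C^n(R,R)$. Gauge transformations (i.e.\ $A_\infty$-isomorphisms with $f_1=\id$) modify $m_n$ only by Hochschild coboundaries, so $[m_n]\in \HH^n(R,R)$ is well-defined; by the HKR theorem this class has a canonical representative $\eta\in T^n_{R/k}$. An $A_\infty$-isomorphism with trivial linear part and $(n-1)$-st Taylor coefficient given by a cobounding cochain for $m_n - \varepsilon\eta$ then witnesses $E^{\min}\cong R_\eta$. Combined with the standard fact that the $A_\infty$-Yoneda functor $X\mapsto \Hom_\aa(T,X)$ is a quasi-equivalence $\aa\to \Cscr^r_\infty(R_\eta)$ whenever $T$ is a compact generator of a triangulated category with arbitrary coproducts, one obtains the required quasi-equivalence sending $T$ to an object isomorphic to $R_\eta$ in $D(R_\eta)$ and inducing the identity on $\End$.

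The main obstacle is the uniqueness claim, which is precisely where higher Toda brackets enter: one wants the Hochschild class $[m_n]$ to be reconstructible from the triangulated category $\Tscr$ alone. Theorem \ref{th:toda:Ainfty} provides the bridge: for any $(n+1)$-term complex $X^\bullet\colon X_0\to\cdots\to X_n$ in $\Tscr$ whose spanning $A_\infty$-subcategory is minimal and satisfies $\aa(X_i,X_j)_u=0$ for $-n+2<u<0$, the higher Toda bracket $\langle X^\bullet\rangle$ (defined purely from the triangulated structure) equals $\overline{s^{-1}b_n(sd_{n-1},\ldots,sd_0)}$ modulo the correction $d_{n-1}\Tscr(X_0,X_{n-1})_{-n+2}+\Tscr(X_1,X_n)_{-n+2}d_0$. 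The strategy is to build a sufficiently rich family of such complexes — going beyond the naïve choice $X_i = \Sigma^{a_i} T$, which is inadequate since $R$ is an integral domain and all composable strings of degree-zero morphisms in $R$ are trivial — by using, e.g., Koszul-type twisted complexes or cones of regular elements on $T$, so that pairing these Toda brackets against the cocycle $m_n$ pins down $[m_n]$ up to coboundary and hence recovers $\eta$ intrinsically from $\Tscr$. The hardest aspect is simultaneously arranging the minimality and the Hom-vanishing hypotheses of Theorem \ref{th:toda:Ainfty} on a collection of complexes large enough to detect every nonzero element of $T^n_{R/k}$.
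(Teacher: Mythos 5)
Your first half essentially reproduces the paper's argument: apply the Yoneda embedding (the paper cites Porta's theorem for the fact that $X\mapsto \aa(T,X)$ is a quasi-equivalence onto $\Cscr^r_\infty(\aa(T,T))$), pass to a minimal model, and run the degree count on $R[\varepsilon]$ to conclude that the only possible non-trivial higher operation is $m_n:R^{\otimes n}\to R\varepsilon$, identified with a Hochschild class via HKR. The degree count is correct, and the identification of the minimal model with some $R_\eta$ up to $A_\infty$-isomorphism is exactly what the paper does.

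The genuine gap is in the uniqueness argument, and it sits precisely where you flag the "hardest aspect." You correctly sense that the bare choice $X_i=\Sigma^{a_i}T$ is useless (since $R$ is a domain there are no interesting complexes of single copies of $T$) and that Koszul-type complexes should remedy this, but you do not supply a concrete family, nor do you resolve the minimality hypothesis of Theorem~\ref{th:toda:Ainfty}. Concretely, what is needed and what the paper does: (a) take the Koszul complexes $K^\bullet_\lambda$ on $(x_1-\lambda_1,\ldots,x_n-\lambda_n)$ for \emph{varying} $\lambda\in k^n$, tensor with $T$, and observe that their terms are finite direct sums of shifts of $T$, so the Hom-vanishing conditions \eqref{eq:todaexistence} and \eqref{toda:coset} hold; the Toda bracket is then a well-defined element $\eta_{\lambda,T}$ of $R_\lambda=R/((x_i-\lambda_i)_i)$. (b) You cannot apply Theorem~\ref{th:toda:Ainfty} directly in $\Cscr^r_\infty(R_\eta)$, because the full $A_\infty$-subcategory spanned by the terms of $K^\bullet_{\lambda,R_\eta}$ is \emph{not} minimal — see Remark~\ref{rem:trouble}, the endomorphism object of $R_\eta$ in $\Cscr^r_\infty(R_\eta)$ is only $A_\infty$-quasi-isomorphic to $R_\eta$, not equal to it. The paper fixes this by constructing a quasi-fully-faithful $A_\infty$-functor $\Tw R_\eta\r \Cscr^r_\infty(R_\eta)$ and doing the Toda-bracket computation in $\Tw R_\eta$, where the subcategory generated by $R_\eta$ \emph{is} minimal. (c) One must then actually compute, using Proposition~\ref{prop:postnikovtwisted} and the explicit formula in Theorem~\ref{th:toda:Ainfty}, that $\eta_{\lambda,R_\eta}$ is (up to a global sign) the image of $n!\,\eta(x_1,\ldots,x_n)$ in $R_\lambda$; since $k$ is infinite and $R$ is a polynomial ring, letting $\lambda$ range over $k^n$ then pins down $\eta\in T^n_{R/k}$. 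Your phrase "pairing these Toda brackets against the cocycle $m_n$" gestures at this but is not an argument — the actual mechanism is the identification of the Toda bracket with the value of $\eta$ at the point $\lambda$, and the recovery of $\eta$ by varying $\lambda$.
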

\begin{proof} Let $\mathsf{R}=\aa(T,T)$.
By the work of Porta \cite{porta} the
$A_\infty$-functor 
\[
Y:\aa\r \Cscr^r_\infty(\mathsf{R}):X\mapsto \aa(T,X)
\]
is a quasi-equivalence which sends $T$ to $\mathsf{R}$. Indeed $\Cscr^r_\infty(\mathsf{R})$ is pre-triangulated and so is $\aa$ by the definition of enhancement.
So $H^\ast(Y)$ is exact.
Since the essential image of $H^\ast(Y)$ contains a generator of $H^\ast(\Cscr^r_\infty(\mathsf{R}))$ (namely $\mathsf{R}$)  it is sufficient to show that
$L:=H^\ast(Y)$ is fully faithful. By the Brown representability theorem $L$ has a right adjoint $R$ which moreover commutes with coproducts (this follows from the fact that $L$ send
the compact generator $T$ to the compact object $\mathsf{R}$). Hence the full subcategory of $H^\ast(\aa)$ spanned by objects $X$ such that $X\r RLX$ is an isomorphism is closed
under shifts, cones, summands and arbitrary coproducts. Moreover, applying $H^\ast(\aa)(T,-)$ we see that it contains $T$. 
Hence it must be $H^\ast(\aa)$ itself. From this one deduces that $L$ is fully faithful.

Now $\mathsf{R}$ is a DG-algebra with cohomology $R[\varepsilon]$, so it is $A_\infty$-isomorphic to a minimal $A_\infty$-structure on $R[\varepsilon]$ with $m_2$ being the usual multiplication. 
For degree reasons, the
only such $A_\infty$-structures are (up to $A_\infty$-isomorphism) of the form $R_\eta$. Hence after choosing an $A_\infty$-quasi-isomorphism $R_\eta\r \mathsf{R}$
we obtain a quasi-equivalence $\Cscr^r_\infty(\mathsf{R})\r \Cscr^r_\infty(R_\eta)$ 
which sends $\mathsf{R}$ to an object quasi-isomorphic to $R_\eta$ in a way which induces the identity on cohomology.
Composing with $Y$ completes the proof of the first part of the proposition.

For $\lambda\in k^n$ let $K_\lambda^\bullet$ be the $R$-Koszul complex on $(x_1-\lambda_1,\ldots,x_n-\lambda_n)$. This is a resolution of  $R_\lambda:=R/((x_i-\lambda_i)_i)$. Put $K^\bullet_{\lambda,T}=K^\bullet_\lambda\otimes_R T$. This is a complex in $\Tscr$.
The conditions \eqref{eq:todaexistence} and \eqref{toda:coset} hold for $K^\bullet_{\lambda,T}$ and hence 
the higher Toda bracket  $\langle K^\bullet_{\lambda,T} \rangle$ is a coset of 
$\sum_i \Tscr(T,T)_{-n+2}(x_i-\lambda_i)$ in $\Tscr(T,T)_{-n+2}=R\varepsilon$. We define $\eta_{\lambda,T}\in R_\lambda$
such that $\eta_{\lambda,T}\varepsilon$ 
is the sole element of the image of $\langle K^\bullet_{\lambda,T} \rangle$ in $R_\lambda$.

By the constructed quasi-equivalence we have 
$\eta_{\lambda,T}=\eta_{\lambda,R_\eta}$. Alas we cannot immediately apply Theorem \ref{th:toda:Ainfty} to the right-hand side of this equality as the $A_\infty$-category spanned by the terms of the complex  $K^\bullet_{\lambda,R_\eta}$ (finite direct sums of $R_\eta$) is not minimal (see Remark \ref{rem:trouble}). To work around this
let $\mathsf{S}=\Cscr_\infty^r(R_\eta)(R_\eta,R_\eta)$, which we regard as a one object $A_\infty$-category $(\mathsf{S},\bullet)$.
%Now $R_\eta$ is an $R_\eta$-$R_\eta$-bimodule and hence the left $R_\eta$-action on $R_\eta$ defines an $A_\infty$-quasi-isomorphism $R_\eta\r \mathsf{S}$
%(see \cite[Lemma 5.3.0.1]{Lefevre}).
As in Remark \ref{rem:trouble} we obtain an $A_\infty$-quasi-isomorphism $R_\eta\r \mathsf{S}$.
Composing with $(\mathsf{S},\bullet)\r \Cscr^r_\infty(R_\eta):\bullet\mapsto R_\eta$ we obtain a quasi-fully faithful $A_\infty$-functor
$(R_\eta,\bullet)\r \Cscr^r_{\infty}(R_\eta):\bullet \mapsto R_\eta$
which gives rise to a quasi-fully faithful $A_\infty$-functor
\[
\Tw R_\eta\r \Tw \Cscr^r_\infty(R_\eta)\cong \Cscr^r_\infty(R_\eta)
\]
which sends $K^\bullet_{\lambda,R_\eta}\in \Tw R_\eta$ to $K^\bullet_{\lambda,R_\eta} \in \Cscr^r_\infty(R_\eta)$. It follows that we may perform the calculation of $\eta_{\lambda,R_\eta}$ in $\Tw R_\eta$. As the $A_\infty$-subcategory of $\Tw R_\eta$ spanned by direct sums of $R_\eta$ is minimal, we are now in a position to apply Theorem \ref{th:toda:Ainfty} and we obtain 
that, up to a global sign, $\eta_{\lambda,R_\eta}$ is the image of $\sum_{\sigma\in S_n}(-1)^{\sigma}\eta(x_{\sigma(1)}-\lambda_{\sigma(1)},\dots,x_{\sigma(n)}-\lambda_{\sigma(n)})$ in $R_\lambda$. Since $T^n_{R/k}=R\bigwedge_i \partial/\partial x_i$, this is the same as the image of $n!\eta(x_1,\ldots,x_n)$.
We obtain by varying $\lambda$ that $\eta$ is uniquely determined.
\end{proof}
\begin{theorem} \label{thm:mainth}
Choose $0\neq \eta\in T^n_{R/k}$ and put $\aa=\Cscr_\infty^r(K)$, $\bb=\Cscr^r_\infty(R_\eta)$. After extending $\eta$ to $T^n_{K/k}=T^n_{R/k}\otimes_R K$, we consider $K_\eta$ as an object in $\bb$. There is an $A_{n-1}$-functor
\[
F:\aa\r \bb
\]
which sends $K$ to $K_\eta$. The corresponding functor
\[
H^\ast(F):D(K)\r D(R_\eta)
\]
does not lift to an $A_n$-functor, even after changing the enhancements on $D(K)$ and $D(R_\eta)$.
\end{theorem}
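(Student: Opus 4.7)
The theorem splits into two parts: constructing $F$, and ruling out $A_n$-lifts of $H^\ast(F)$.

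For the construction of $F$, the plan is to apply the obstruction-theoretic lifting result \cite[Lemma 7.2.1]{RizzardoVdB2}: the exact functor $f: D(K) \to D(R_\eta)$ sending $K$ to $K_\eta$ (well-defined since the triangulation on $D(K)$ is unique, $K$ being a field) admits successively higher $A_m$-lifts provided certain obstruction classes vanish. These are controlled by the vanishing of $H^i\RHom_{R_\eta}(K_\eta, K_\eta)$ for $-n+3 \le i \le -1$, i.e.\ in the gap between the two non-zero cohomology degrees $2-n$ and $0$ of the endomorphism complex. Hence $f$ lifts up to order $n-1$, giving $F$.

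For the non-existence direction, suppose toward contradiction that $H^\ast(F)$ lifts to an $A_n$-functor $G': \aa' \to \bb'$ between some $A_\infty$-enhancements. First I would reduce to the standard enhancements: by Proposition \ref{lem:enhancement} there is an $A_\infty$-quasi-equivalence $\bb' \simeq \bb$ with the \emph{same} $\eta$ (since $\eta$ is determined by the triangulated structure), and an analogous but easier argument for $D(K)$ (whose compact generator $K$ has endomorphism $A_\infty$-algebra quasi-isomorphic to the field $K$, hence formal) gives $\aa' \simeq \aa$. Composing $G'$ with these $A_\infty$-quasi-equivalences and their $A_\infty$-quasi-inverses yields an $A_n$-functor $G: \aa \to \bb$ inducing $H^\ast(F)$; after further auto-equivalences I may assume $G(K) = K_\eta$ with $H^0 G_1 : K \to K$ equal to the identity.

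Next, restricting $G$ to the full $A_\infty$-subcategories of $\aa$, $\bb$ generated by $K$ and $K_\eta$ and passing to minimal models produces an $A_n$-morphism of $A_\infty$-algebras $\bar G : K \to K_\eta$ lifting $\id_K$. The essential input is that the minimal model of $\bb(K_\eta, K_\eta)$ is $K_\eta$ itself, equipped with $m_n = \varepsilon\eta$: the underlying graded algebra is $K \oplus K[n-2]$ via $\RHom_{R_\eta}(K_\eta, K_\eta) \simeq \RHom_R(K, K_\eta)$ (adjunction plus flatness of $K/R$), and the arity-$n$ product is pinned down by the higher Toda bracket argument underlying Proposition \ref{lem:enhancement}, applied to Koszul-type complexes built from $K_\eta$ in place of $R_\eta$. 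Degree considerations ($K_\eta$ concentrated in degrees $0$ and $2-n$, and $|\bar G_i| = 1-i$) force $\bar G_i = 0$ for $i \notin \{1, n-1\}$, so the single nontrivial $A_n$-relation at order $n$ collapses to $\partial_{\mathrm{HH}} \bar G_{n-1} = \pm\,\varepsilon\eta$ in the Hochschild cochain complex $C^\bullet(K, K[n-2])$. But by HKR the class of $\varepsilon\eta$ corresponds to the image of $\eta \ne 0$ in $T^n_{K/k} = T^n_{R/k}\otimes_R K$, which is non-zero; so $\varepsilon\eta$ is not a Hochschild coboundary, contradicting the existence of $\bar G_{n-1}$.

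The principal obstacle in this plan is the minimal-model identification in the third paragraph: a priori the minimal model of $\bb(K_\eta, K_\eta)$ is determined only up to $A_\infty$-isomorphism, and one must verify that its arity-$n$ product represents a non-zero class in $\HH^n(K, K)[n-2]$ (indeed the one corresponding to $\eta$). This is where the higher Toda bracket machinery of \S\ref{sec:higher:toda}, adapted from the setup of Proposition \ref{lem:enhancement}, will be essential.
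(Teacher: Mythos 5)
Your proposal follows the same broad strategy as the paper: construct $F$ by obstruction theory via \cite[Lemma 7.2.1]{RizzardoVdB2}, then for non-lifting reduce to the standard enhancements via Proposition~\ref{lem:enhancement}, restrict to the one-object $A_\infty$-subcategories spanned by $K$ and $K_\eta$, and derive a contradiction from $\eta\neq 0$ and the fact that a non-formal minimal $A_\infty$-structure on $K[\varepsilon]$ admits no unital $A_n$-morphism from $K$ lifting the inclusion (this last step is \cite[Chapitre B]{Lefevre}, matching your Hochschild-coboundary unwinding).

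The place where you and the paper genuinely diverge is the step you yourself flag as the principal obstacle: identifying the minimal model of $\bb(K_\eta,K_\eta)$ as $K_\eta$ (i.e.\ controlling the arity-$n$ product). You propose to re-run the higher Toda bracket machinery on ``Koszul-type complexes built from $K_\eta$ in place of $R_\eta$.'' This does not work as stated: the Koszul complexes in Proposition~\ref{lem:enhancement} are built from regular sequences $(x_i-\lambda_i)$ in $R$, but $K$ is a field, so there are no analogous complexes over $K$, and $K^\bullet_\lambda\otimes_R K_\eta$ is simply acyclic. The paper sidesteps the issue entirely with a one-line argument: $K_\eta$ is a $K_\eta$-$R_\eta$-bimodule, so the left $K_\eta$-action gives an $A_\infty$-quasi-isomorphism $K_\eta\to\Cscr^r_\infty(R_\eta)(K_\eta,K_\eta)$ (this is Remark~\ref{rem:trouble} combined with \cite[Lemma 5.3.0.1]{Lefevre}, the $A_\infty$-version of the left-regular representation). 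That pins down the minimal model with the correct $\eta$ directly, with no Toda brackets needed at this point. So your outline is right, but you are missing the key technical device, and the replacement you sketch would fail.

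A secondary, minor gap: in constructing $F$, you apply the obstruction argument directly to a functor $D(K)\to D(R_\eta)$, but \cite[Lemma 7.2.1]{RizzardoVdB2} lifts an $A_2$-functor from the one-object category $K$. The paper carries out that one-object lift, then extends to $\Cscr^r_\infty(K)$ using $\Free\tilde{}\,(-)$ and the semi-simplicity of $D(K)$; you also need to verify $\Tscr(K_\eta,K_\eta)=K[\varepsilon]$ before you can claim the obstruction groups vanish, which the paper does via the localization Proposition~\ref{prop:loc} (you assert the computation without proof). Neither of these is a conceptual problem, but they are steps your proof would need to fill in.
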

Before giving the proof of this theorem we show that it implies Theorem \ref{intro:thm0} in the introduction.
\begin{proof}[Proof of Theorem \ref{intro:thm0}]
From Theorem \ref{thm:mainth} we obtain that the hypotheses of Proposition  \ref{prop:no lift if strong unique enhancement}
are satisfied for $(\aa,\bb,F)$ (with $n$ replaced by $n-1$) and thus for $n\ge 14$ we obtain a triangulated category $\Dscr=H^\ast(\aa\coprod_{{}_F\bb}\bb)$ without $A_\infty$-enhancement
with a semi-orthogonal decomposition
\[
\Dscr=\langle D(K), D(R_\eta)\rangle.\qedhere
\]
\end{proof}
\begin{proof}[Proof of Theorem \ref{thm:mainth}]
We first discuss the construction of the functor $F$. To be compatible with Propositions \ref{prop:loc} and \ref{lem:enhancement}, put $\Tscr=D(R_\eta)=H^\ast(\Cscr^r_\infty(R_\eta))$ and let
$T$ be the object $R_\eta$. Put $S=R-\{0\}$. It is easy to that $T_S=K_\eta$. Indeed $K_\eta$ is in $\Tscr_S$ and $\cone(R_\eta\r K_\eta)$ is in $\Cscr$ by \eqref{eq:ore2}. 
In particular it follows by \eqref{eq:factor} that $\Tscr(K_\eta,K_\eta)=K[\varepsilon]$. 

Choosing homotopies we obtain an $A_2$-functor
\begin{equation}
\label{eq:A2}
F: K\r \Cscr^r_{\infty}(R_\eta):K\mapsto K_\eta
\end{equation}
and the obstructions against extending $\mu$ to an $A_i$-functor are in $\HH^j(K,\Tscr(K_\eta,K_\eta)_{-j+2})$ for $3\le j\le i$ (see e.g.\ 
\cite[Lemma 7.2.1]{RizzardoVdB2}). Since $\Tscr(K_\eta,K_\eta)=K[\varepsilon]$ the obstructions vanish for $j<n$. So $F$ extends to an $A_{n-1}$-functor. Let $\Free\tilde{}\,(-)$ be defined
as $\Free(-)$ but allowing arbitrary formal direct sums. If $\aa$ is an $A_n$-category then so is $\Free\tilde{}\,(\aa)$ and similar statement is true for functors.
We then obtain an $A_{n-1}$-functor
$\Free\tilde{}\, (F):\Free\tilde{}\, (K)\r \Free\tilde{}\, (\Cscr_\infty^r(R_\eta))$. Since $\Free\tilde{}\, (K)$ is quasi-equivalent to $\Cscr_\infty(K)$ (both are models for $D(K)$ which is semi-simple) and the direct sum
defines an $A_\infty$-functor $\Free\tilde{}\, (\Cscr_\infty^r(R_\eta))\r \Cscr_\infty^r(R_\eta)$, after choosing a suitable $A_\infty$-quasi-inverse to the first functor we obtain the sought $A_{n-1}$-functor $F:\Cscr^r_\infty(K)\r \Cscr^r_\infty(R_\eta)$ which
sends $K$ to $K_\eta$.

We claim that $F$ does not lift to an $A_n$-functor, even if we change enhancements. It if did, the $A_2$-functor \eqref{eq:A2} would also lift to an $A_n$-functor, as by Proposition \ref{lem:enhancement}
the enhancement on $\Cscr^r_{\infty}(R_\eta)$ is (weakly) unique and (as we have shown in the first paragraph) the object $K_\eta$ is determined by the triangulated structure. If this were possible then it would induce the structure of an $A_n$-functor on the corestriction
\[
K\xrightarrow{\mu}\cc\subset \Cscr^r_{\infty}(R_\eta)
\]
where $\cc$ is the full subcategory of $\Cscr^r_{\infty}(R_\eta)$ spanned by the single object $K_\eta$. Put $\mathsf{K}=\Cscr_\infty^r(R_\eta)(K_\eta,K_\eta)$. Since $K_\eta$ is an $K_\eta$-$R_\eta$-bimodule,
the left $K_\eta$-action on $K_\eta$ induces an $A_\infty$-quasi-isomorphism $K_\eta\r \mathsf{K}=\cc$. Taking an $A_\infty$-quasi-inverse and composing with $K\xrightarrow{\mu} \cc$ we obtain
and $A_n$-morphism $K\r K_\eta$ such that $H^\ast(K)\r H^\ast(K_\eta)=K[\epsilon]$ is the natural inclusion.
Such an $A_n$-morphism does not exist as $\eta\neq 0$ \cite[Chapitre B]{Lefevre}.
\end{proof}

%\bibliography{nomodel}
%\bibliographystyle{amsplain}

\providecommand{\bysame}{\leavevmode\hbox to3em{\hrulefill}\thinspace}
\providecommand{\MR}{\relax\ifhmode\unskip\space\fi MR }
% \MRhref is called by the amsart/book/proc definition of \MR.
\providecommand{\MRhref}[2]{%
  \href{http://www.ams.org/mathscinet-getitem?mr=#1}{#2}
}
\providecommand{\href}[2]{#2}

\end{document}